\documentclass{amsart}
\usepackage{dino}
\title{Scott Analysis below the Vaught Ordinal}
\author{David Gonzalez, Dino Rossegger, Dan Turetsky}

\address[Gonzalez]{University of Notre Dame\\
Department of Mathematics\\
Hurley Hall, 255 Hurley, Notre Dame, IN 46556\\
  USA}
\email{dgonza42@nd.edu}

\address[Rossegger]{Institute of Discrete Mathematics and Geometry, Technische Universit\"at Wien}
\email{dino.rossegger@tuwien.ac.at}

\address[Turetsky]{School of Mathematics and Stastics, Victoria University of Wellington}
\email{dan.turetsky@vuw.ac.nz}

\subjclass{03C57,03C15,03C75,03E15}
\thanks{The authors would like to thank the Erwin Schrödinger International Institute for Mathematics and Physics (ESI) in Vienna for its hospitality and support during the thematic program 'Reverse Mathematics'. The first and second authors would like to thank the Hausdorff Research Institute for Mathematics (HIM) in Bonn for its hospitality during the Trimester Program 'Definability, Decidability, and Computability'. This work was supported by the Hausdorff Center for Mathematics, funded by the Deutsche Forschungsgemeinschaft (DFG, German Research Foundation) under Germany’s Excellence Strategy – 390685813. The work of the second author was supported by the Austrian Science Fund (FWF) 10.55776/PIN1878224 and 10.55776/PAT4699225. The work of the third author was supported by the Royal Society of New Zealand through the Marsden grant 23-VUW-118.}
\begin{document}
\begin{abstract}
We develop new tools for determining the existence of models of specific Scott ranks under countability conditions. Using these, we improve a result of Sacks by showing that any counterexample to Vaught’s conjecture must have at least two models of every parameterized Scott rank—a result that contrasts with the unparameterized case, where minimal counterexamples have only one model at many ranks. We further prove that theories with fewer than continuum many models have trivial Scott spectra and provide a general, systematic classification of low Scott rank models when only countably many $\Sigma_\alpha$-types are realized. Additionally, we classify the Scott complexity spectra for many Ehrenfeucht theories, and prove the $\omega$-Vaught’s conjecture in this setting, answering an infinitary strengthening of a question of Pillay and Tanović. We demonstrate that the Scott complexity of prime models for $\omega$-stable first-order theories is commensurate with the complexity of the theory itself. Along the way, we apply our methods to concrete theories like p-groups, trees, and Boolean algebras, answering questions of Harris--Montalbán and Alvir--Csima--MacLean regarding specific structures.
\end{abstract}
\maketitle
Much of contemporary mathematics comes down to investigating the properties of structures that satisfy some list of axioms, what logicians call a theory.
One of the most fundamental of these properties is about counting:
If you write down a theory, how many structures of a fixed size satisfy it?
While investigating theories with only finitely many models, Robert Vaught conjectured that the number of countable models of a theory is countable or continuum, but never in between \cite{Vau61}.
In other terms, unlike general sets, the conjecture states that the countable isomorphism types of a fixed theory satisfy the continuum hypothesis without further set theoretic assumptions.
This problem has been open for over sixty years and continues to inspire work in various parts of mathematical logic, including model theory, descriptive set theory, and computability theory.
It is unclear what part of mathematical logic will resolve Vaught's conjecture, or what further interactions its continued investigation will promote.

Important early progress on Vaught's conjecture was achieved by Morley, who demonstrated that the number of countable models of a theory is countable, continuum, or exactly $\aleph_1$, the second smallest cardinal, and never anything else \cite{Mor70}.
Morley's approach centrally informs many contemporary approaches to Vaught's conjecture and similar model counting problems, including the work in this article.
Morley considered the \textbf{infinitary Vaught's conjecture}, a stronger version that lets theories be axiomatized by a formula in infinitary logic, $L_{\omega_1\omega}$, instead of just a first-order theory.
This is the setting with which we presently concern ourselves; henceforth ``Vaught's conjecture'' refers to the infinitary version.
This setting has the advantage that one can exploit properties of infinitary sentences not satisfied by finitary theories, for example, Scott's theorem that every countable structure is characterized up to isomorphism by an $L_{\omega_1\omega}$ sentence~\cite{Sco65}.
Scott's work allows for several invariants to be associated with a countable structure, all of which roughly measure the difficulty of describing the isomorphism type of the structure.
These invariants are often called the \emph{Scott rank} or \emph{Scott complexity} of the structure.
The field of Scott analysis aims to understand how these invariants relate to other important notions throughout logic, how to effectively calculate Scott invariants, and which sorts of Scott invariants emerge from models of a particular theory as a means of comparing them.
Scott analysis has emerged as an independently interesting field touching various parts of logic in addition to its usefulness for problems like Vaught's conjecture (see the survey~\cite{HT22} for many examples and more background on Scott analysis).

Sacks considered Morley's analysis closely and noted that every theory satisfying Vaught's conjecture has an associated countable ordinal that witnesses this satisfaction~\cite{Sac07}.
The \emph{Vaught ordinal} of a theory, $vo(T)$, is the least ordinal that witnesses one of two behaviors.
\begin{enumerate}
 \tightlist
 \item The first possibility, corresponding to the continuum outcome, is that there are continuum many different formulas true of the various models of $T$ at level $vo(T)$.
 \item The second possibility, corresponding to the countable outcome, is that there are no models of Scott rank greater than $vo(T)$.
\end{enumerate}
The existence of a countable Vaught ordinal for $T$ is equivalent to $T$'s satisfaction of Vaught's conjecture, whereas a counterexample to Vaught's conjecture has $vo(T)=\omega_1$.

The present article concerns Scott analysis below the Vaught ordinal of a theory.
In other words, we consider theories $T$ and ordinals $\alpha$ such that $T$ has models of Scott rank at least $\alpha$, but there are only countably many different $\Pinf{\alpha}$ types realized by the models of $T$.
There are two ways to understand this effort.
The first is that we are attempting to understand Vaught's conjecture.
A counterexample to Vaught's conjecture, should it exist, would have $vo(T)=\omega_1$, so our analysis would apply to all levels of the quantifier-complexity hierarchy.
Through this effort, we obtain a better understanding of counterexamples to Vaught's conjecture, whether in an attempt to construct one or an attempt to disprove their existence.
The second is to develop general Scott analytic tools for concrete theories of interest below their Vaught ordinal.
For example, the work of Harris and Montalbán \cite{HM} calculates that the Vaught ordinal of Boolean algebras is $\omega$ and describes the structure of Boolean algebras up to finitely many alternations of quantifiers.
Similarly, work of the first two authors \cite{GR24} explained the behavior of linear orderings below their Vaught ordinal of 3.
The present work provides tools for similar future efforts within a more abstract, widely applicable framework.
We understand our work in both contexts and emphasize applications in each framework.

We use notions of Scott rank introduced by Montalbán in \cite{MonSR} that we more precisely define in the preliminaries section.
More specifically, we refer to the \emph{(unparamaterized) Scott rank} of a structure, $SR(\M)$, and the \emph{parametrized Scott rank} of a structure, $SR_p(\M)$.
The difference between these notions is that the parameterized Scott rank allows for finitely many parameters to be named to aid in the description of the isomorphism type of the structure (e.g., a basis of a finite-dimensional vector space).
These notions relate closely to \emph{Scott complexity}, or the syntactic complexity of the simplest Scott sentence of a structure.

We take care to note the variety of Scott invariants used because, unlike most previous work, we observe qualitative differences between the behavior of theories based on these details.
This is most apparent when it comes to counting models of a particular Scott rank in a hypothetical counterexample to Vaught's conjecture.
This line of inquiry is due to Sacks \cite{Sac83}, who identifies two reasons to understand Vaught's conjecture from this perspective.
The first is that there are counterexamples to strong versions of Vaught's conjecture that allow more expressive logics in describing the theory and have very particular shapes from the perspective of Scott rank.
Sacks, giving credit to H. Friedman, describes the set of $1$-transitive linear orderings (those for which any pair of singletons is in the same automorphism orbit) as a $PC_{\omega_1,\omega}$ class with exactly $\aleph_1$ models, yet there is only (exactly) one such model of each Scott rank.
Showing that this sort of Scott analytic behavior is impossible in $L_{\omega_1,\omega}$ demonstrates the futility of adapting such examples to a true $L_{\omega_1,\omega}$ counterexample.
The second reason to study this is that a possible route to proving Vaught's conjecture is to consider a hypothetical counterexample and show that it must have far too many models at a given Scott rank.
Any progress towards ``thickening'' the profile of a theory that is a counterexample to Vaught's conjecture by demonstrating the necessity of many models works towards this goal.
Sacks himself in \cite{Sac83} demonstrates that there must be multiple models of each $\Sigma_2^1$-admissible Scott rank, and this has not been previously improved since his original paper.
We offer the following improvement.
\begin{restatable*}{corollary}{twomodelsvc}\label{cor:2ModelsVC}
If $T$ is a $\Pinf{2}$ counterexample to Vaught's conjecture, it has at least two models of every parameterized Scott rank.
\end{restatable*}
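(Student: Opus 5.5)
The plan is to fix a $\Pinf{2}$ counterexample $T$ and a countable ordinal $\alpha$. By Morley's analysis, $T$ has $\aleph_1$ models. Then $vo(T)=\omega_1$, so for every countable $\beta$ only countably many $\Pinf{\beta}$-types are realized in models of $T$, while models of arbitrarily large countable Scott rank exist. So $T$ lies below its Vaught ordinal at every level, and any general result of the paper valid in that regime applies at every $\alpha$.

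\textbf{Step 1: existence of one model of parameterized Scott rank $\alpha$.} I take this to be part of the known picture of counterexamples, or else to follow from the paper's existence tools. The first step is to note that the parameterized Scott spectrum of $T$ is all of $\omega_1$, or at least all ordinals above some small bound that the $\Pinf{2}$ hypothesis removes. To obtain it, I would take a model $\M$ of large Scott rank. I would then pass to a model that is $\Pinf{\alpha}$-elementarily equivalent to $\M$ over a tuple but whose $\Pinf{\alpha}$-types are isolated. Countability of the types gives an $\alpha$-atomic (prime-like) model of the $\Pinf{\alpha}$-theory over some parameters. I would arrange for this model to have parameterized Scott rank exactly $\alpha$.

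\textbf{Step 2: a second model.} Suppose for contradiction that $\M$ is the unique model of $T$ with $SR_p(\M)=\alpha$. The idea is to use countability of the types to build another model $\mathcal N$ that agrees with $\M$ on enough $\Pinf{\alpha}$-information to inherit the same parameterized Scott rank, yet is not isomorphic to $\M$. There are two natural candidates.
\begin{enumerate}
\item Take the $\alpha$-atomic model over the Scott parameters $\bar a$ of $\M$, and compare it with the model obtained by realizing one additional non-isolated $\Pinf{\alpha}$-type, or by omitting a type $\M$ realizes.
\item Use the paper's tool, presumably a theorem stating that when only countably many $\Sigma_\alpha$-types are realized, a model of Scott rank $\alpha$ admits a ``saturated'' and an ``atomic'' version at level $\alpha$. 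Show these differ whenever the theory has models of higher rank.
\end{enumerate}
Since $T$ has models of Scott rank greater than $\alpha$, some $\Pinf{\alpha}$-type over $\bar a$ is non-principal. Realizing or omitting it then yields two nonisomorphic models. Both still have all $\Pinf{\alpha}$-types over parameters supported, so both have $SR_p=\alpha$.

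\textbf{Main obstacle.} The hard part is ensuring that the modified model $\mathcal N$ has parameterized Scott rank exactly $\alpha$. Its rank must neither jump higher, since realizing a type could create automorphism orbits that are not $\Sigma_\alpha$-definable, nor drop. This is where the $\Pinf{2}$ hypothesis should enter. With a $\Pinf{2}$ axiomatization, unions of chains of models are models, and back-and-forth constructions at level $\alpha$ stay inside $\mathrm{Mod}(T)$. I would first try to construct $\mathcal N$ as a union of an $\omega$-chain of copies of $\M$, or more generally a $\Pinf{2}$-limit. I would then verify the rank by showing that the orbit of every tuple is defined by its $\Sigma_\alpha$-type over $\bar a$, using countability of types to find isolating formulas.

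Finally, the unparameterized contrast mentioned in the introduction shows the parameters are essential. The two models should therefore be distinguished by a tuple $\bar a$ whose $\Sigma_\alpha$-type is realized in one model but not the other. The parameters are what allow the rank to remain $\alpha$ in both.
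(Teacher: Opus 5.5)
There is a genuine gap. Nothing in your argument bounds the parameterized Scott rank from both sides, and that is the whole content of the corollary. You flag this yourself as the main obstacle but do not resolve it.

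Step 1 is unsupported. Constructions of prime/type-omitting kind, such as \cref{lem:havingPimodels} and \cref{lem:havingSigmamodels}, only give \emph{upper} bounds, so ``arrange for this model to have parameterized Scott rank exactly $\alpha$'' has nothing behind it. The missing idea is to first remove the lower ranks with a sentence of controlled complexity. By smallness, only countably many models of $T$ have $SR_p<\alpha$, so the disjunction $R_\alpha$ of their Scott sentences is $\Sinf{\alpha+1}$. Then $T\wedge\neg R_\alpha$ is a consistent $\Sinf{\alpha+2}$ sentence, and \cref{lem:havingSigmamodels} yields a model with $SR_p$ exactly $\alpha$.

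Step 2 is in worse shape. Realizing a non-isolated $\Pinf{\alpha}$-type is exactly how orbits that are not $\Sinf{\alpha}$-definable arise, and omitting a type gives no lower bound on the rank. Already in \cref{ex:cutModels} at $\alpha=1$, start from the level-$1$ atomic model $\mathcal{U}$ and realize the non-principal type of a point above all $c_i$: the result can be $\mathcal{N}$, which has $SR_p=2$. Unions of chains and the $\Pinf{2}$ axiomatization say nothing about definability of orbits. The ``saturated version at level $\alpha$'' is not a tool you have. You also never justify that a non-principal type over your particular $\bar a$ exists, since models of higher rank need not realize the type of $\bar a$.

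The paper never modifies a given model. It gets the second model by \emph{excluding} the first via its Scott sentence and reapplying the existence lemmas, and it tells models apart by Scott complexity. This is the corollary just before the statement, with $\beta=1$, via \cref{prop:sigmasmallnessandgreaterscimpliessc}. Let $\mathcal{B}_0$ be the model obtained from $T\wedge\neg R_\alpha$.

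\begin{itemize}
\item If $\mathcal{B}_0$ has Scott complexity at most $\dSinf{\alpha+1}$, the negation of its Scott sentence is still $\Sinf{\alpha+2}$. So \cref{lem:havingSigmamodels} applies again and gives a new model with $SR_p=\alpha$.
\item Iterating, either there are infinitely many such models, or some $\mathcal{B}_i$ has Scott complexity exactly $\Sinf{\alpha+2}$. That is precisely when its negation becomes too complex to continue.
\item In the latter case, apply \cref{lem:smallnessandgreaterscimpliessc} to the $\Pinf{\alpha+1}$ sentence asserting $T$ together with the negations of the $\Pinf{\alpha}$ Scott sentences of the countably many models of Scott rank below $\alpha$. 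This gives a model of Scott complexity exactly $\Pinf{\alpha+1}$. It also has $SR_p=\alpha$, and it is not isomorphic to $\mathcal{B}_i$ because their Scott complexities differ.
\end{itemize}

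The $\Pinf{2}$ hypothesis only ensures these sentences have the required complexities for every $\alpha\geq 1$.
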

We also demonstrate limits to this methodology by showing that if Vaught's conjecture fails, then there must be a counterexample with exactly one model of unparameterized Scott rank $\alpha$ for $\aleph_1$-many $\alpha$ (\cref{thm:minimalcealeph1}).
That said, this contrast offers a different approach to proving Vaught's conjecture.
Although distinct, the parameterized and unparameterized Scott ranks are very closely related.
If it were possible to strengthen one of our proofs to show that either a) counterexamples to Vaught's conjecture have at least two models of every unparameterized Scott rank or b) there must be a counterexample with exactly one model of each parameterized Scott rank, then Vaught's conjecture would be proven by a direct contradiction.
Perhaps it is even possible to find a notion of rank that sits between parameterized and unparameterized Scott rank that would allow such a conclusion.
It is tantalizing to consider; the only thing stopping this proof idea from going through currently is the influence of a few parameters.
That said, it is not clear how to remove those parameters for now.

In order to prove these results, we primarily use three technical tools.
The first two use the assumption of $\Sinf{\alpha}$-smallness, or that there are only countably many $\Sinf{\beta}$ types realized among models of $T$.
\begin{restatable*}{lemma}{havingPimodels}\label{lem:havingPimodels}
  If $T$ is a consistent theory that is $\Sinf{\beta}$-small and $T\in \Pinf{\beta+1}$, then there is a model $\A\models T$ so that $SR(\A)\leq \beta$. 
\end{restatable*}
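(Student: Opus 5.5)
The plan is to produce the model by a Baire category (equivalently, a forcing or Henkin) argument in a topology adapted to $\Sinf{\beta}$ formulas. The target property is the characterization of $SR(\A)\leq\beta$ recalled in the preliminaries (Montalbán): every automorphism orbit of $\A$ is $\Sinf{\beta}$-definable without parameters, equivalently every $\Pinf{\beta}$-type realized in $\A$ is $\Sinf{\beta}$-supported. So it suffices to build $\A\models T$ with the following property: for every tuple $\bar a\in\A$ there is a $\Sinf{\beta}$ formula $\varphi(\bar x)$ true of $\bar a$ such that $T\models \forall\bar x\,(\varphi(\bar x)\to \psi(\bar x))$ for every $\Pinf{\beta}$ (and $\Sinf{\beta}$) formula $\psi$ true of $\bar a$. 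Call such a type \emph{$T$-principal}. Given this, a standard back-and-forth shows that any two tuples of $\A$ satisfying the same principal formula are automorphic, so orbits are $\Sinf{\beta}$-definable.

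\textbf{Setting up the space.} Work in the space $\mathrm{Mod}(T)$ of models with universe $\omega$, equipped with the topology $\tau_\beta$ generated by the sets defined by $\Sinf{\beta}$ formulas with parameters from $\omega$. By the Sami/Vaught style results, this topology is Polish, refining the usual logic topology. Write $T=\bigwedge_i\forall\bar x\,\theta_i(\bar x)$ with each $\theta_i\in\Sinf{\beta}$. Then $T$ defines a $G_\delta$ subset of the ambient space in $\tau_\beta$, hence a Polish space (nonempty since $T$ is consistent). For a fixed tuple $\bar n$ of naturals and a realized $\Sinf{\beta}$-type $p$, the set $U_p=\{\M: \M\models p(\bar n)\}$ is an intersection of an open set with the complement of opens.

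\textbf{Key step (main obstacle).} Show that $\{\M : \text{the type of }\bar n\text{ in }\M\text{ is not }T\text{-principal}\}$ is meager. By $\Sinf{\beta}$-smallness, this set is a countable union of the sets $U_p$ over nonprincipal realized types $p$. Each $U_p$ is Borel, so it suffices to show each such $U_p$ has empty interior relative to $\mathrm{Mod}(T)$; being $G_\delta$ in a Polish space, it would then be nowhere dense.

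Suppose instead a basic open set $V\cap\mathrm{Mod}(T)$ is contained in $U_p$. Here $V$ is given by a $\Sinf{\beta}$ formula $\chi(\bar n,\bar m)$. Then $\exists\bar y\,\chi(\bar x,\bar y)$ is a $\Sinf{\beta}$ formula in $p$ which, modulo $T$, forces all of $p$, so $p$ is principal, a contradiction. The point needing care is this quantifier-elimination step: passing from ``every model of $T$ with $\chi(\bar n,\bar m)$ realizes $p$ at $\bar n$'' to the implication $T\models\exists\bar y\chi\to p$. This uses that the models can be permuted on $\omega$ and that $\tau_\beta$ is invariant. Where countability is really needed is in making the union countable, together with the Baire-category fact that in the countable space of realized types, principal ones are dense. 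If principal types were not dense we would get a perfect set of types, contradicting smallness; I would verify this as the first sub-step.

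\textbf{Finishing.} By the Baire category theorem, intersecting over the countably many tuples $\bar n$ gives a comeager, hence nonempty, set of $\M\models T$ in which every tuple realizes a $T$-principal $\Sinf{\beta}$-type. By the first paragraph, every orbit of such an $\A$ is $\Sinf{\beta}$-definable, so $SR(\A)\leq\beta$.
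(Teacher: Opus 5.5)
The gap is in your key step: the inference it rests on is invalid, and the claim it is meant to establish is false.

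\textbf{The invalid inference.} You conclude that $U_p$ is nowhere dense from its being a $G_\delta$ with empty interior. A dense $G_\delta$, such as the irrationals, has empty interior and is comeager, so this does not follow.

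\textbf{The false claim.} You want types that are not $T$-principal in your sense (one $\Sinf{\beta}$ formula entailing the whole $\Sinf{\beta}$- and $\Pinf{\beta}$-type modulo $T$) to be generically omitted. This already fails for $\beta=1$, where $\tau_1$ is just the usual Polish topology.
\begin{itemize}
\item Take the language $\{<,P,Q_0,Q_1,\dots\}$. Let $T$ be the $\Pinf{2}$ sentence saying: $<$ is dense without endpoints; exactly one point is in $P$; every point is in exactly one of $P,Q_0,Q_1,\dots$; $Q_0$ is dense; each $Q_k$ is empty or dense; and $Q_{k+1}\neq\emptyset\to Q_k\neq\emptyset$.
\item Its models are $\mathcal{A}$ (all $Q_k$ dense) and $\mathcal{B}_k$ ($Q_j\neq\emptyset$ iff $j<k$). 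Call their $P$-points $c$ and $a_k$.
\item A $\Sinf{1}$ formula true of $c$ has a disjunct witnessed by a finite configuration using finitely many colours, so it also holds of $a_K$ for large $K$. But $\exists y\,Q_K(y)$ holds of $c$ and not of $a_K$. Hence $c$'s type is not $T$-principal.
\item Yet $\{\mathcal{M}:\mathcal{M}\cong\mathcal{A}\}=\bigcap_k[\exists y\,Q_k(y)]$ is a dense $G_\delta$. So the models in which $0$ realizes $c$'s type form a comeager subset of the open set $[P(0)]$, while having empty interior, exactly as in your inference.
\end{itemize}

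\textbf{The repair: ask less.} Require only that the $\Pinf{\beta}$-type $\Gamma$ of each tuple be entailed modulo $T$ by some $\Sinf{\beta}$ formula true of it. In the example, $P(x)$ does this for $c$, since $\Pinf{1}\text{-tp}(c)\subseteq\Pinf{1}\text{-tp}(a_k)$. This weaker condition already gives $SR\leq\beta$, and it is the form the paper uses. Then argue as follows.
\begin{itemize}
\item Since $\Gamma\subseteq\Pinf{\beta}$, the set $C=\{\mathcal{M}:\mathcal{M}\models\Gamma(\bar n)\}$ is $\tau_\beta$-closed.
\item Let $O$ be the open set of models in which $\bar n$ satisfies some $\Sinf{\beta}$ formula (with extra parameters) whose existential closure entails $\Gamma$ modulo $T$.
\item Your permutation argument shows the closed set $C\setminus O$ has empty interior, so it is genuinely nowhere dense.
\item Smallness makes the pairs $(\bar n,\Gamma)$ countable, so a generic model avoids all these sets.
\end{itemize}
This is exactly the dichotomy in step (7) of the paper's Henkin construction. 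Either $\lnot\theta$, for some $\theta\in\Gamma$, is consistent with the current condition (and $\lnot\theta$ is a $\Sinf{\beta}$ formula), or the condition's existential closure already entails $\Gamma$.

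\textbf{The Polishness claim.} For $\beta\geq 2$ your appeal to Sami is misplaced. His Polish topologies come from countable fragments closed under negation. The topology generated by all $\Sinf{\beta}$ formulas is in general not even second countable: with infinitely many unary predicates, the $\Pinf{1}$ conditions on the element $0$ give continuum many disjoint open sets. So the Baire category theorem needs a substitute. The paper supplies one by running the category argument by hand on finite sets of $\Sinf{\beta}$ sentences, with each stage witnessed by an actual model $\mathcal{M}_s\models T$. Alternatively, you could pass to a countable fragment and Morleyize.

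Your sub-step that principal types are dense plays no role in the argument.
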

\begin{restatable*}{lemma}{havingSigmamodels}\label{lem:havingSigmamodels}
  If $T$ is a consistent theory that is $\Sinf{\beta}$-small and $T\in \Sinf{\beta+2}$, then there is a model $\A\models T$ so that $SR_p(\A)\leq \beta$. 
\end{restatable*}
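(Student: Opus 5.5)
We prove the statement about $\Sinf{\beta+2}$ theories by reducing it to \cref{lem:havingPimodels}, naming finitely many parameters. We assume the standard normal form: a $\Sinf{\beta+2}$ sentence is a countable disjunction $\bigvee_i \exists \bar x_i\, \psi_i(\bar x_i)$ with each $\psi_i$ a $\Pinf{\beta+1}$ formula. We also use Montalbán's characterization that $SR_p(\A)\le\beta$ iff there is a tuple $\bar a$ with $SR(\A,\bar a)\le \beta$, i.e. every automorphism orbit over $\bar a$ is $\Sinf{\beta}$-definable over $\bar a$.

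Since $T$ is consistent, fix a model $\B\models T$. Then some disjunct holds in $\B$, say $\B\models\psi_i(\bar b)$. Expand the language by new constants $\bar c$ and consider the theory
\[
T' \;=\; T\wedge \psi_i(\bar c).
\]
This sentence is $\Pinf{\beta+1}$ in the expanded language: $\psi_i(\bar c)$ is $\Pinf{\beta+1}$, and $T$ itself is equivalent, under the extra conjunct, to a sentence of no higher complexity. Indeed, $\psi_i(\bar c)$ implies the disjunct $\exists \bar x\,\psi_i(\bar x)$, so $T'$ is equivalent to $\psi_i(\bar c)$ together with whatever additional $\Pinf{\beta+1}$ axioms were part of the presentation of $T$. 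If $T$ is a pure disjunction, then $T'$ is just $\psi_i(\bar c)$. Moreover, $T'$ is consistent, witnessed by $(\B,\bar b)$.

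Next we check that $T'$ is $\Sinf{\beta}$-small. A $\Sinf{\beta}$-type of a tuple $\bar d$ in a model $(\A,\bar a)$ of $T'$ is determined by the $\Sinf{\beta}$-type of $\bar a\bar d$ in $\A$, which is a model of $T$. By $\Sinf{\beta}$-smallness of $T$, only countably many such types are realized. Hence $T'$ realizes only countably many $\Sinf{\beta}$-types, with the usual bookkeeping about the types of finite tuples extending $\bar c$.

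Applying \cref{lem:havingPimodels} to $T'$ yields $(\A,\bar a)\models T'$ with $SR(\A,\bar a)\le\beta$. Then $\A\models T$, and $SR_p(\A)\le SR(\A,\bar a)\le\beta$, as required.

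The main obstacle is making sure that $T'$ really lies in $\Pinf{\beta+1}$ in the form that \cref{lem:havingPimodels} requires. If $T$ is a general $\Sinf{\beta+2}$ sentence, then after fixing the disjunct we should replace $T$ by $\psi_i(\bar c)$ alone; this loses nothing, since $\psi_i(\bar c)$ implies $T$. A second, minor point is the smallness transfer: we need that $\Sinf{\beta}$-smallness is defined in terms of types of tuples over the empty set, so that adding finitely many constants preserves it, as argued above.
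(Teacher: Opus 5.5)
Your proposal is correct and is essentially the paper's proof: fix a disjunct $\exists\bar x_i\,\psi_i(\bar x_i)$ true in some model, treat $\psi_i(\bar c)$ as a $\Pinf{\beta+1}$ theory in the language with new constants, note that it inherits $\Sinf{\beta}$-smallness because the reducts of its models satisfy $T$, apply \cref{lem:havingPimodels}, and then forget the constants. Your hedging about ``additional axioms'' is unnecessary, since $T\wedge\psi_i(\bar c)$ is logically equivalent to $\psi_i(\bar c)$, which is exactly the theory the paper uses.
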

The third tool does not depend on counting assumptions.
It states, using a notion fully explained in the preliminaries section, that if a Scott rank $\alpha$ model $\A$ shares all $\Sinf{\alpha}$ and $\Pinf{\alpha}$ facts with another model $\B$, it also shares all  $\Pinf{\alpha+1}$ facts with $\B$.
\begin{restatable*}{proposition}{uniqueminSR}\label{prop:uniqueminSR}
    Let $SR(\mathcal{A})\leq\alpha$ and assume that $\mathcal{A}\equiv_{\alpha}\mathcal{B}$. Then, $\mathcal{A}\geq_{\alpha+1}\mathcal{B}$.
\end{restatable*}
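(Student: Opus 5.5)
The plan is to verify $\mathcal{A}\geq_{\alpha+1}\mathcal{B}$ through the back-and-forth characterization of the relations $\leq_\beta$. I use the convention that $\mathcal{A}\geq_{\alpha+1}\mathcal{B}$ means every $\Pinf{\alpha+1}$ sentence true in $\mathcal{A}$ is true in $\mathcal{B}$. Under this convention, the standard characterization says it suffices to show the following:
\begin{quote}
for every tuple $\bar a\in\mathcal{A}$ there is a tuple $\bar b\in\mathcal{B}$ of the same length such that every $\Pinf{\alpha}$ formula true of $\bar a$ in $\mathcal{A}$ is true of $\bar b$ in $\mathcal{B}$.
\end{quote}
This is the condition $(\mathcal{A},\bar a)\geq_\alpha(\mathcal{B},\bar b)$. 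Equivalently, one can argue directly: a $\Pinf{\alpha+1}$ sentence is a countable conjunction of sentences $\forall\bar x\,\psi(\bar x)$ with $\psi\in\Sinf{\alpha}$. If such a sentence failed in $\mathcal{B}$ at some $\bar b$, then $\neg\psi(\bar b)$ would be a $\Pinf{\alpha}$ fact to be pulled back to $\mathcal{A}$. So the precise form needed depends on the orientation of the convention in the preliminaries, and I would adjust accordingly. The key point is that the argument below produces, for each $\bar a$, a $\bar b$ realizing the full $\Pinf{\alpha}$-type of $\bar a$.

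First I use $SR(\mathcal{A})\leq\alpha$: every automorphism orbit of $\mathcal{A}$ is definable without parameters by a $\Sinf{\alpha}$ formula. Fix $\bar a$ and let $\varphi_{\bar a}(\bar x)\in\Sinf{\alpha}$ define its orbit.
\begin{itemize}
\item The sentence $\exists\bar x\,\varphi_{\bar a}(\bar x)$ is $\Sinf{\alpha}$ and true in $\mathcal{A}$. Since $\mathcal{A}\equiv_\alpha\mathcal{B}$, it holds in $\mathcal{B}$, so we may pick $\bar b\in\mathcal{B}$ with $\mathcal{B}\models\varphi_{\bar a}(\bar b)$.
\item Now let $\psi(\bar x)$ be any $\Pinf{\alpha}$ formula true of $\bar a$ in $\mathcal{A}$. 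Because $\varphi_{\bar a}$ defines the orbit of $\bar a$ and $\psi$ is automorphism invariant, $\mathcal{A}\models\forall\bar x\,(\varphi_{\bar a}(\bar x)\rightarrow\psi(\bar x))$.
\item This sentence is $\forall\bar x\,(\neg\varphi_{\bar a}\vee\psi)$, where $\neg\varphi_{\bar a}$ is $\Pinf{\alpha}$. Since $\Pinf{\alpha}$ is closed under finite disjunctions and universal quantification, the whole sentence is $\Pinf{\alpha}$.
\item Hence it transfers to $\mathcal{B}$ by $\mathcal{A}\equiv_\alpha\mathcal{B}$, and so $\mathcal{B}\models\psi(\bar b)$.
\end{itemize}
Thus $\bar b$ satisfies the entire $\Pinf{\alpha}$-type of $\bar a$, as required.

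The main obstacle is bookkeeping rather than a conceptual step. One must check that the direction of $\geq_{\alpha+1}$ matches the requirement ``for each $\bar a$ there is $\bar b$'' rather than the reverse. One must also confirm that the back-and-forth characterization holds in the form used, in particular at limit $\alpha$. There the type used should be the union of the $\Pinf{\gamma}$-types for $\gamma<\alpha$, and the same argument handles each $\gamma$ simultaneously, since each such formula is also $\Pinf{\alpha}$.

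If the convention is instead phrased via $\Sinf{\alpha+1}$ sentences, I would dualize. Suppose $\exists\bar x\,\theta(\bar x)$, with $\theta\in\Pinf{\alpha}$, is true in $\mathcal{B}$ but fails in $\mathcal{A}$. Then $\mathcal{A}\models\forall\bar x\,\neg\theta$. Apply the above with the roles of the orbit formulas in $\mathcal{A}$: the orbit of any witness would yield a transferred $\Pinf{\alpha}$ sentence giving a contradiction. The same two-step use of $\varphi_{\bar a}$ handles this case.
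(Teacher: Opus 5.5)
Your central argument is correct and is essentially the paper's proof. You pick $\bar b\in\mathcal B$ realizing the $\Sinf{\alpha}$ orbit formula $\varphi_{\bar a}$ and then transfer the $\Pinf{\alpha}$ sentence $\forall\bar x\,(\varphi_{\bar a}(\bar x)\rightarrow\psi(\bar x))$. The paper does the same in game language: it picks $\bar b$ by a first move of the $\equiv_\alpha$ game, then pulls the $\Sinf{\alpha}$ sentence $\exists\bar x\bar y\bar z\,(\theta(\bar x,\bar y)\wedge\rho(\bar x,\bar z))$ back from $\mathcal B$ to $\mathcal A$, which is just the contrapositive of your transfer.

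Your stated convention is backwards, though. By the paper's definition and Karp's theorem, $\mathcal A\geq_{\alpha+1}\mathcal B$ means $\mathcal B\leq_{\alpha+1}\mathcal A$, i.e.\ every $\Pinf{\alpha+1}$ sentence true in $\mathcal B$ holds in $\mathcal A$. Unwinding the back-and-forth definition, this is exactly the condition you actually prove: each $\bar a\in\mathcal A$ has some $\bar b\in\mathcal B$ with $(\mathcal A,\bar a)\leq_\alpha(\mathcal B,\bar b)$. The direction you wrote down, and the ``dualize'' fallback in your last paragraph, is false in general. Take $\alpha=1$, $\mathcal A=\eta$, $\mathcal B=1+\eta$: these are $\equiv_1$, yet the $\Pinf{2}$ sentence $\forall x\exists y\,(y<x)$ holds in $\eta$ and fails in $1+\eta$. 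So those hedges should be deleted rather than kept as alternatives.
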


These results find many other applications in the broader Scott analysis literature.
For example, we prove that, below the Vaught ordinal, there is no ``Scott skipping'', or omitted Scott ranks, in stark contrast to the general case where Harrison-Trainor showed that arbitrarily long intervals of ordinals can be skipped~\cite{HT18}.

\begin{restatable*}{theorem}{ssptrivial}\label{thm:ssptrivial}
	Suppose $T$ is a consistent $\Pinf{\alpha}$ sentence with less than $2^{\aleph_0}$ countable models. Then $T$ has a model of Scott rank $\beta$ for all $\beta\in [\alpha,vo(T))$. Thus, the Scott spectrum of $T$ is trivial.
\end{restatable*}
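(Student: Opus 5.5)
Fix $\beta\in[\alpha,vo(T))$; the goal is a model of $T$ whose Scott rank is exactly $\beta$. Since $T$ has fewer than $2^{\aleph_0}$ countable models and $\beta<vo(T)$, $T$ is $\Sinf{\beta}$-small: only countably many $\Sinf{\beta}$ (equivalently $\Pinf{\beta}$) types are realized in models of $T$, as otherwise $vo(T)\le\beta$. Moreover $vo(T)>\beta$ means there is a model $\M\models T$ with $SR(\M)>\beta$, or at least $SR(\M)\geq\beta$; this is the model I will cut down. The idea is to write a $\Pinf{\beta+1}$ sentence $\psi$ that holds in $\M$ and implies $T$, and that forces Scott rank $\geq\beta$. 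Then \cref{lem:havingPimodels} gives a model of $\psi$ of Scott rank $\le\beta$, hence exactly $\beta$.

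\emph{Upper bound.} Let $\psi$ be $T\wedge\theta$. Here $\theta$ is a $\Pinf{\beta+1}$ sentence true in $\M$, taken for instance from the $\Pinf{\beta+1}$ theory of $\M$, chosen to pin down the rank as described below. Since $\alpha\le\beta$, we have $T\in\Pinf{\beta+1}$, so $\psi\in\Pinf{\beta+1}$. Also $\psi$ is consistent, as witnessed by $\M$, and it inherits $\Sinf{\beta}$-smallness from $T$. So \cref{lem:havingPimodels} yields $\A\models\psi$ with $SR(\A)\le\beta$.

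\emph{Lower bound.} This is the step I expect to be the main obstacle. If $\beta$ is a successor $\gamma+1$, I take $\theta$ to say that some automorphism orbit is not $\Sinf{\gamma}$-definable. Concretely, $\theta$ asserts that there exists a tuple $\bar a$ realizing some fixed $\Pinf{\gamma}$-type $p$ of $\M$ that is not $\Sinf{\gamma}$-supported. The countably many $\Sinf{\gamma}$-types make this expressible as a countable disjunction and conjunction of the right complexity: a $\Sinf{\gamma+1}$ statement, hence within $\Pinf{\beta+1}$. To choose such a $p$, I use $SR(\M)\ge\beta$ to pick, in $\M$, a $\Pinf{\gamma}$-type of a tuple that is not implied by any single $\Sinf{\gamma}$ formula true of the tuple.

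Now suppose $SR(\A)<\beta$. Then every orbit of $\A$ is $\Sinf{\gamma}$-definable, so the tuple realizing $p$ has a $\Sinf{\gamma}$ formula implying its $\Pinf{\gamma}$ type, contradicting $\theta$. For limit $\beta$ I instead take $\theta$ as a conjunction over $\gamma<\beta$ of such statements, using the fact that $SR(\M)\ge\beta$ gives witnesses at every $\gamma<\beta$. Each conjunct is $\Sinf{\gamma+1}$, so the conjunction lies in $\Pinf{\beta}\subseteq\Pinf{\beta+1}$.

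The delicate point is that non-definability of the orbit has to be stated with a bounded quantifier over realized types. I expect \cref{prop:uniqueminSR} to help here: it transfers $\equiv_\gamma$-information to $\geq_{\gamma+1}$-information, so that in $\A$ the type $p$ is still not isolated. If this route fails, I would fall back on a counting argument, showing that if $\theta$ could not be stated then $\M$ itself would have rank $<\beta$. Finally, "trivial Scott spectrum" follows because the set of ranks contains the whole interval $[\alpha,vo(T))$ and, by the definition of $vo(T)$ in the countable case, no ranks beyond it.
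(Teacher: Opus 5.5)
Your architecture is the paper's: strengthen $T$ by a $\Pinf{\beta+1}$ sentence that holds in some model of rank $\geq\beta$ and excludes every model of rank $<\beta$, then apply \cref{lem:havingPimodels}. The gap is in the construction of $\theta$, which is the step that carries the weight. You leave it unproved, and as described it fails when $\beta=\gamma+1$.

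Write $\varphi_p(\bar x)$ for ``the $\Pinf{\gamma}$-type of $\bar x$ contains $p$''; modulo $T$, smallness makes this a single $\Pinf{\gamma}$ formula. Non-support of $\bar x$ is then a conjunction, over $\Sinf{\gamma}$ formulas $\chi$, of $\neg\chi(\bar x)\vee\exists\bar y\,(\chi(\bar y)\wedge\neg\varphi_p(\bar y))$. Each conjunct is the disjunction of a $\Pinf{\gamma}$ and a $\Sinf{\gamma}$ formula, so non-support is a $\Pinf{\gamma+1}$ condition on $\bar x$. Your leading $\exists\bar x$ therefore makes $\theta$ a $\Sinf{\gamma+2}$ sentence, not a $\Sinf{\gamma+1}$ one. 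A $\Sinf{\gamma+2}$ sentence is not in $\Pinf{\gamma+2}=\Pinf{\beta+1}$, so \cref{lem:havingPimodels} does not apply, and \cref{lem:havingSigmamodels} would only give $SR_p\leq\gamma$.

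There is a second issue. Having countably many realized $\Sinf{\gamma}$-types does not by itself turn ``for every $\Sinf{\gamma}$ formula $\chi$'' into a countable conjunction. You would need the normal form to restrict to $\chi$ of the form $\exists\bar y\,\varphi_r(\bar x,\bar y)$, with $r$ a realized $\delta$-type and $\delta<\gamma$.

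Your limit case tolerates the extra quantifier, since each conjunct is $\Sinf{\gamma+2}$ with $\gamma+2<\beta$. So it is the successor case that breaks, and \cref{prop:uniqueminSR} does nothing to fix it.

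The missing idea is to count models instead of describing orbits. Each model of $T$ with $SR<\beta$ has a $\Pinf{\beta}$ Scott sentence, so it is determined by its $\Pinf{\beta}$-theory. By $\Sinf{\beta}$-smallness only countably many such theories occur. Let $\mathcal{A}$ range over these countably many models of rank $<\beta$, with $\phi_{\mathcal{A}}$ a $\Pinf{\beta}$ Scott sentence for each. Then $T\wedge\bigwedge_{\mathcal{A}}\neg\phi_{\mathcal{A}}$ is $\Pinf{\beta+1}$ and true in your $\mathcal{M}$. \cref{lem:havingPimodels} then produces a model of rank exactly $\beta$. This is the paper's proof, routed through \cref{lem:smallnessandgreaterscimpliessc}. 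The same count is also what guarantees a model of rank $\geq\beta$ exists when $T$ has $\aleph_1$ models, a point you pass over.

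Your intrinsic route can be rescued, but only by restructuring $\theta$. Use $\exists\bar x\,\varphi_p(\bar x)\wedge\forall\bar x\,(\varphi_p(\bar x)\to N_p(\bar x))$, where $N_p(\bar x)$ says that no $\exists\bar y\,\varphi_r(\bar x,\bar y)$ true of $\bar x$ implies $\varphi_p$. This sentence is $\dSinf{\gamma+1}$. It holds in $\mathcal{M}$ because $\Sinf{\gamma}$ facts pass from any $\bar x$ with $\bar a\leq_\gamma\bar x$ down to $\bar a$, where $\bar a\in\mathcal{M}$ is the non-supported tuple of type $p$. None of this is in your proposal.
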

We demonstrate that, below the Vaught ordinal, there are always models with different parameterized and unparamaterized Scott ranks.
This allows us to give an entirely non-constructive affirmative answer to a recent question of Alvir, Csima, and MacLean about p-groups \cite{ACM24}.
\begin{restatable*}{corollary}{abelianpsigma}\label{cor:abelianpsigma}
There is an Abelian p-group with Scott complexity $\Sinf{3}$, or there are infinitely many with Scott complexity $\dSinf{2}$.
\end{restatable*}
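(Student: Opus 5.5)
The plan is to apply \cref{lem:havingSigmamodels} with $\beta=1$ to a sequence of pairwise contradictory $\Sinf{3}$ sentences about Abelian $p$-groups, none of which has a model of unparameterized Scott rank $\leq 1$. We then read off the Scott complexity of the models produced, using Montalbán's correspondences: $SR(\A)\leq\alpha$ iff $\A$ has a $\Pinf{\alpha+1}$ Scott sentence, and $SR_p(\A)\leq\alpha$ iff $\A$ has a $\Sinf{\alpha+2}$ Scott sentence. Suppose a countable $p$-group $G$ has $SR_p(G)\leq 1$ but $SR(G)\geq 2$. Then $G$ has a $\Sinf{3}$ Scott sentence and no $\Pinf{2}$ Scott sentence. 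The only Scott complexities compatible with this are $\Sinf{3}$ and $\dSinf{2}$, since a $\dSinf{2}$ sentence is both $\Sinf{3}$ and $\Pinf{3}$, while $\Pinf{3}$ alone would give $SR_p\le 1$ only together with a $\dSinf{2}$ or $\Sinf{3}$ sentence anyway. So every model produced this way has complexity $\Sinf{3}$ or $\dSinf{2}$.

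The first step is to check that the $\Pinf{2}$ sentence axiomatizing Abelian $p$-groups is $\Sinf{1}$-small, that is, only countably many $\Sinf{1}$-types of tuples are realized. The intended argument is that the existential type of a tuple $\bar a$ in a $p$-group is determined by countable data: the isomorphism type of the finite subgroup $\langle\bar a\rangle$, together with the finite-height information of its elements (which $p^k y=a$ are solvable for each $k<\omega$, and which elements lie in the socle at which height). All of this ranges over a countable set.

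The second step is to choose sentences $T_n$, for $n\in\omega$, with the following properties:
\begin{itemize}
\item each $T_n$ is $\Sinf{3}$;
\item the $T_n$ are pairwise inconsistent;
\item no $T_n$ has a model with a $\Pinf{2}$ Scott sentence.
\end{itemize}
My first try is to let $T_n$ say: the group is a $p$-group, and there exist $x_1,\dots,x_n$ that are independent in the socle, each of infinite height (a $\Pinf{2}$ condition on $x_i$), and every socle element of infinite height lies in $\langle x_1,\dots,x_n\rangle$ (a $\Pinf{2}$ condition given the $x_i$). The existential quantifier over the $x_i$ makes $T_n$ a $\Sinf{3}$ sentence. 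For the third property I would invoke the Alvir--Csima--MacLean description of $p$-groups of Scott rank $\le 1$ (essentially those whose isomorphism type is pinned down by $\Pinf{2}$ data, which excludes having nonzero elements of infinite height but finite Ulm invariant at $\omega$). If that description is not directly usable, I would argue directly with the back-and-forth relations $\geq_2$: an element of infinite height cannot have its orbit defined by a $\Sinf{2}$ formula without parameters when only finitely many such elements exist up to the socle structure.

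Since each $T_n$ is consistent (witnessed, for instance, by Prüfer-type constructions of length $\omega+1$), \cref{lem:havingSigmamodels} gives models $G_n\models T_n$ with $SR_p(G_n)\leq 1$. The third property gives $SR(G_n)\geq 2$, so by the first paragraph each $G_n$ has complexity $\Sinf{3}$ or $\dSinf{2}$. If some $G_n$ has complexity $\Sinf{3}$, we are done. Otherwise every $G_n$ has complexity $\dSinf{2}$, and the $G_n$ are pairwise non-isomorphic because the $T_n$ are pairwise inconsistent. This gives infinitely many $p$-groups of complexity $\dSinf{2}$.

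I expect the main obstacle to be the third property, ruling out models with a $\Pinf{2}$ Scott sentence. It requires a precise handle on which $p$-groups have Scott rank $1$, and the chosen $T_n$ must genuinely force rank $2$. The $\Sinf{1}$-smallness check should be routine by comparison.
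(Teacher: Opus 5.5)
Your skeleton is sound: given consistent, pairwise inconsistent $\Sinf{3}$ sentences $T_n$ about $p$-groups, none with a model of Scott rank $\le 1$, \cref{lem:havingSigmamodels} at $\beta=1$ and your reading-off of complexities would give the dichotomy. The gap is that all the content lies in producing the $T_n$, and the ones you propose fail on both counts.

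First, ``every socle element of infinite height lies in $\langle x_1,\dots,x_n\rangle$'' is not a $\Pinf{2}$ condition on $\bar x$. Having infinite height is the $\Pinf{2}$ formula $\bigwwedge_k \exists y\,(p^k y = z)$, and it occurs in the antecedent under $\forall z$. So the clause is $\Pinf{3}$, and $T_n$ as written is $\Sinf{4}$. Rewriting the clause will not make it $\Pinf{2}$, because it is not preserved under unions of chains. The groups $\mathbb{Z}(p^\infty)^n\oplus\mathbb{Z}(p^{i+1})$, with $z_i$ generating the cyclic summand and embedded via $z_i\mapsto p z_{i+1}$, all satisfy it with the same $\bar x$. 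Their union $\mathbb{Z}(p^\infty)^{n+1}$ does not.

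Second, your third property fails outright. $\mathbb{Z}(p^\infty)^n$ is a model of $T_n$ and has a $\Pinf{2}$ Scott sentence: abelian, $p$-torsion, divisible, socle of size exactly $p^n$. So the model that \cref{lem:havingSigmamodels} returns may well have Scott rank $1$. More generally, your route needs a description of the $p$-groups of Scott rank $\le 1$ sharp enough to exclude all of them with $\Sinf{3}$ sentences. Natural algebraic conditions involving infinite height keep landing at $\Pinf{3}$.

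Your smallness sketch is also off. The $\Sinf{1}$-type of a tuple depends on the ambient group, for example on how many socle elements of each finite height exist, not just on $\langle\bar a\rangle$ and heights. Countability still holds, and the paper simply cites Khisamiev.

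The missing idea is that you never need to identify the Scott rank $1$ groups. Fix a $p$-group $\mathcal{B}$ of Scott rank at least $3$. The theory $T$ of $p$-groups is $\Pinf{2}$ and $\Sinf{1}$-small, so \cref{lem:smallEasybnfDescription} makes the $\equiv_1$-class $E(\mathcal{B},1)$ a $\Pinf{2}$-definable subclass. By \cref{cor:onlyOneSRalphaPerClass}, this class contains at most one group of Scott rank $\le 1$.

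Now apply \cref{lem:havingSigmamodels} repeatedly to $T\wedge E(\mathcal{B},1)$, conjoined with the negations of the Scott sentences of the groups already found. Unless one of them already has complexity $\Sinf{3}$, these Scott sentences are $\Pinf{2}$ or $\dSinf{2}$. So the conjunction stays $\Sinf{3}$, and $\mathcal{B}$ keeps it consistent.

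This yields either a group of complexity $\Sinf{3}$, or infinitely many pairwise non-isomorphic groups of parameterized Scott rank $1$ in a single $\equiv_1$-class, all but at most one of complexity $\dSinf{2}$. That is \cref{prop:parametersMatter}, via \cref{prop:sigmasmallnessandgreaterscimpliessc}. Distinctness comes from excluding earlier models rather than from pairwise inconsistency. The only $p$-group-specific inputs are Khisamiev's smallness theorem and the existence of $p$-groups of arbitrarily high Scott rank.
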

We also get new insights into the theory of order-theoretic trees, a theory first understood in the context of Scott analysis by Steel \cite{Ste78}, by describing a general method of corresponding a $\Pinf{\alpha}$ type with models of Scott rank $\alpha$ below the Vaught ordinal.
In particular, we show that the simplest order-theoretic trees correspond to the finitely many finite trees that they (minimally) do not take embeddings from.
These tools also aid in solving a conjecture left open by Harris and Montalbán regarding Boolean algebras \cite{HM}.
We demonstrate that any $\Sinf{n}$-type of Boolean algebras that only extends to a single $\Sinf{n+1}$-type describes the isomorphism type of a Boolean algebra of finite Scott rank.

We pay special attention to \define{Ehrenfeucht theories}; theories with only finitely many models.
As mentioned, these theories have a special historical tie with Vaught's conjecture, as Vaught's original question was posed in a paper considering these very theories~\cite{Vau61}.
We consider infinitary Ehrenfeucht theories, unlike Vaught.
In their write-up of Vaught's conjecture and related open problems~\cite{PT25}, Pillay and Tanović emphasize the importance of Ehrenfeucht theories.
Among the questions they ask about Ehrenfeucht theories is whether they satisfy Martin's conjecture~\cite[Question 7]{PT25}.
Martin's conjecture is a model-theoretic strengthening of Vaught's conjecture, which, in the infinitary setting, translates to what Gonzalez and Montalbán called the $\omega$\textbf{-Vaught conjecture} ($\omega$-VC)~\cite{GM23}.
It states that, not only does every theory have a countable Vaught ordinal, but every $\Pinf{\alpha}$ theory has a Vaught ordinal at most $\alpha+\omega$.
In the setting of infinitary logic, we answer Pillay and Tanović's question in the affirmative.
\begin{restatable*}{corollary}{ehrenfeuchtomegavaught}\label{cor:ehrenfeuchtomegavaught}
For any $\Pinf{\alpha}$ Ehrenfeucht theory $T$, $vo(T)\leq\alpha+\omega$. In the language of \cite{GM23}, Ehrenfeucht theories satisfy $\omega$-Vaught's conjecture.
\end{restatable*}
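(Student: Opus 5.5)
We aim to show that no $\Pinf{\alpha}$ Ehrenfeucht theory $T$ can have Vaught ordinal above $\alpha+\omega$. Since $T$ has only finitely many countable models, it has fewer than $2^{\aleph_0}$ of them, so $vo(T)$ is countable. Thus $vo(T)$ is characterized by the countable outcome: every model of $T$ has Scott rank below $vo(T)$, while $T$ is $\Sinf{\beta}$-small for all $\beta<vo(T)$.

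Now suppose toward a contradiction that $vo(T)>\alpha+\omega$. By \cref{thm:ssptrivial}, applied to the consistent $\Pinf{\alpha}$ sentence $T$, the theory $T$ has a model of Scott rank $\beta$ for every $\beta\in[\alpha,vo(T))$. In particular it has models of Scott ranks $\alpha,\alpha+1,\alpha+2,\dots$, which are infinitely many ordinals in $[\alpha,\alpha+\omega)\subseteq[\alpha,vo(T))$. Models of distinct Scott rank are non-isomorphic, so $T$ has infinitely many countable models. This contradicts $T$ being Ehrenfeucht, so $vo(T)\le\alpha+\omega$.

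In fact the argument gives the sharper bound $vo(T)\le\alpha+k$ when $T$ has $k$ models: $T$ realizes at least one model for each rank in $[\alpha,vo(T))$, and that interval has order type at least $vo(T)-\alpha$. For the stated corollary, $\alpha+\omega$ suffices, and this bound is exactly the $\omega$-Vaught conjecture of \cite{GM23} for $T$.

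The only delicate point is the boundary convention for $vo(T)$: does \cref{thm:ssptrivial} give models up to $vo(T)$ exclusive, and is a $\Pinf{\alpha}$ theory with a model of rank below $\alpha$ handled? Neither matters, because we only need infinitely many ranks inside $[\alpha,vo(T))$. The assumption $vo(T)>\alpha+\omega$ already places all of $[\alpha,\alpha+\omega)$ inside that interval.
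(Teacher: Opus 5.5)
Your proof is correct, but it takes a different route from the paper. The paper does not use \cref{thm:ssptrivial} here. It reads the corollary off the height bound of \cref{prop:finiteHeight}, in the level-$\alpha$ form stated just before the corollary: a $\Pinf{\alpha}$ theory with $2N+1$ (resp.\ $2N+2$) models has all its models of Scott complexity below (resp.\ at most) $\dSinf{\alpha+N}$, so every Scott rank lies below $\alpha+\omega$. That bound rests on \cref{prop:aboveDSinfGivesSinfPinf}: a model of complexity above $\dSinf{n}$ forces both a $\Sinf{n+1}$ model and a $\Pinf{n+1}$ model. So each level climbed costs two models, plus one bottom model of $\Pi$ complexity from \cref{lem:smallnessandgreaterscimpliessc}.

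You instead charge one model per rank, using the fact that no rank in $[\alpha,vo(T))$ is skipped. This gives $vo(T)\le\alpha+k$ for $k$ models. Your argument is shorter and needs only \cref{lem:havingPimodels}, not the $\Sigma$ version \cref{lem:havingSigmamodels}. The paper's argument gives a bound roughly twice as sharp (height about $k/2$ above $\alpha$ rather than $k$). It also controls the actual Scott complexities, which is what the finiteness of the spectrum counting function $F(N)$ needs.

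Your black-box use of \cref{thm:ssptrivial} is sound. For each $\beta\in[\alpha,vo(T))$, take the conjunction of $T$ with the negations of the $\Pinf{\beta}$ Scott sentences of the countably many models of rank below $\beta$. This is a consistent, $\Sinf{\beta}$-small $\Pinf{\beta+1}$ sentence, so \cref{lem:havingPimodels} directly gives a model of rank exactly $\beta$.
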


Our analysis of Ehrenfeucht theories is more precisely calibrated than what is represented above.
In fact, we show that, for every $n$, among all theories with exactly $n$ models, there are only finitely many possible arrangements of the Scott complexities realized by those models. 
(We call these arrangements \emph{Scott complexity spectra} as introduced in \cite{GLRS}.)
Letting $F(n)$ be the number of these spectra on $n$ models, we calculate that $F(2)=2$ and $F(3)=4$ by theoretically cutting out most possible spectra and explicitly constructing theories that exhibit the remaining behavior.
The value of $F(n)$ for higher $n$ remains an open area for future research.

The last application of our tools is to analyze prime models of first-order theories in the infinitary setting of Scott analysis.
The main result in this area is as follows.
\begin{restatable*}{theorem}{pinprime}\label{thm:pinprime}
Let $T$ be a complete $\Pi_{n+2}$ axiomatizable first-order theory that is $\omega$-stable or has fewer than continuum many countable models.
The prime model $P$ must have Scott complexity at most $\Pinf{n+2}$.
\end{restatable*}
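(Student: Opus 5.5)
We would establish that the prime model $P$ has Scott rank at most $n+1$, in Montalbán's sense: every automorphism orbit of a tuple in $P$ is definable by a $\Sinf{n+1}$ formula. That rank bound immediately gives a $\Pinf{n+2}$ Scott sentence. The approach is to combine atomicity of $P$ with the lemma \cref{lem:havingPimodels} and the proposition \cref{prop:uniqueminSR}.

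\textbf{Step 1: reduce to smallness.} Both hypotheses ($\omega$-stability, or fewer than continuum many countable models) imply that $T$ is small, i.e.\ has countably many types. Hence the prime model exists. More to the point, for the infinitary theory $T' = T \cup \{\text{``every tuple realizes an isolated type''}\}$ we want $\Sinf{n+1}$-smallness, meaning only countably many $\Sinf{n+1}$-types are realized in models of $T'$.
\begin{itemize}
\item In the case of fewer than $2^{\aleph_0}$ models, this follows from the Morley-style count: continuum many $\Sinf{n+1}$-types would yield continuum many models.
\item In the $\omega$-stable case, we would argue directly. Every model of $T'$ is isomorphic to $P$, since atomic countable models of a complete theory are unique. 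So $T'$ is trivially small at every level.
\end{itemize}

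\textbf{Step 2: express atomicity at the right level.} The key observation is that ``$P$ is atomic'' can be expressed by a sentence of complexity $\Pinf{n+2}$ relative to $T$. It says: for every tuple $\bar x$ there is an isolating first-order formula $\phi$ with $\phi(\bar x)$. This is a $\Pinf{2}$ condition over finitary formulas, hence $\Pinf{n+2}$ once combined with the $\Pi_{n+2}$ axioms of $T$. We therefore aim for the stronger combined sentence
\[
\psi = T \wedge \bigwedge_{\bar x}\,\forall \bar x \bigvee_{\phi \text{ isolating}} \phi(\bar x),
\]
which is $\Pinf{n+2}$ and whose unique countable model is $P$.

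We then apply \cref{lem:havingPimodels} with $\beta=n+1$. Since $\psi\in\Pinf{n+2}$ is consistent and $\Sinf{n+1}$-small, it has a model $\A$ with $SR(\A)\le n+1$. Uniqueness forces $\A\cong P$, so $SR(P)\le n+1$, and therefore $P$ has a $\Pinf{n+2}$ Scott sentence.

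\textbf{Main obstacle.} The delicate point is the complexity bookkeeping. The disjunction over isolating formulas is fine: they are finitary, and any first-order formula is equivalent modulo $T$ to something reasonable. But the isolating formulas themselves may be of arbitrary quantifier complexity. To be honest, this means $\psi$ is $\Pinf{n+2}$ only if each first-order formula counts as $\Sinf{\omega}$ at worst, which is bad.

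The fix I would try first is to use atomicity more cleverly. In $P$, types are determined by their restrictions to $\Pi_{n+2}$-ish fragments, because $T$ is $\Pi_{n+2}$ axiomatized, which gives model completeness to a level $n+2$ Morleyization. So we would work in the Morleyization where $T$ becomes $\forall\exists$ relative to $\Sigma_{n+1}$ formulas. We would then show that isolating formulas can be chosen as $\Sinf{n+1}$-equivalent on models of $T$, using that a $\Pi_{n+2}$-axiomatized theory is $n+1$-model complete in the appropriate sense. Failing that, we would fall back to \cref{prop:uniqueminSR} to upgrade $\equiv_{n+1}$-equivalence to $\ge_{n+2}$, and argue that any model $\equiv_{n+1}$-equivalent to $P$ satisfying $T$ is isomorphic to $P$.
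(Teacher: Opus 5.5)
There is a genuine gap, exactly at the point you flag as the main obstacle, and neither proposed fix closes it. First, consider what your reduction actually requires. Suppose every type realized in $P$ were isolated by a first-order $\Sigma_{n+1}$ formula. Then restricting the disjunction in $\psi$ to such formulas would make $\psi$ a $\Pinf{n+2}$ sentence whose only countable model is $P$, so $\psi$ would already be a Scott sentence and \cref{lem:havingPimodels} would play no role. That statement does hold under the hypotheses, but it is essentially a restatement of the theorem, and you give no argument for it.

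\emph{Your first fix} appeals to a false principle. After Morleyizing the $\Sigma_n$ formulas, ``$\Pi_{n+2}$-axiomatized implies $(n+1)$-model complete'' says that every complete $\Pi_2$ theory is model complete. But $\Pi_2$-axiomatizability, i.e.\ closure under unions of chains, is much weaker: the complete theory of proper pairs of algebraically closed fields of a fixed characteristic is $\Pi_2$-axiomatizable but not model complete. Note also that this step never uses $\omega$-stability or the bound on the number of models.

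\emph{Your fallback} also targets a false statement. For $n=0$, any two models of a complete first-order theory are $\equiv_1$, since $\Sinf{1}$ and $\Pinf{1}$ sentences are countable disjunctions and conjunctions of first-order sentences. So the fallback would say $T$ has a unique countable model, yet $E_3$ from \cref{ex:cutModels} satisfies the hypotheses and has three. \cref{prop:uniqueminSR} gives uniqueness only among models of Scott rank at most $n+1$ (\cref{cor:onlyOneSRalphaPerClass}). Applying it with $P$ therefore presupposes $SR(P)\le n+1$, which is the conclusion.

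The missing idea is the paper's reduction to level $1$, where first-order information suffices.
\begin{enumerate}
\item Morleyize $L$ by predicates for all $\Sigma_n$ formulas, with $\Pi_2$ defining axioms. Then $T$ becomes a complete $\Pi_2$ theory $T'$ that is still $\omega$-stable or has fewer than $2^{\aleph_0}$ countable models.
\item $\Sinf{1}$ formulas are countable disjunctions of first-order existential formulas, so countably many first-order types make $T'$ $\Sinf{1}$-small. Hence \cref{lem:havingPimodels} with $\beta=1$ gives $M'\models T'$ with a $\Pinf{2}$ Scott sentence.
\item The prime model $P'$ of $T'$ embeds elementarily into $M'$. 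The relation $\le_1$ only involves $\Pinf{1}$ formulas, i.e.\ countable conjunctions of finitary universal formulas, and elementary embeddings preserve these. So playing along the embedding wins the game for $P'\ge_2 M'$. Thus $M'$'s Scott sentence holds in $P'$, and $P'\cong M'$ (\cref{prop:Pi2PrimeModel}).
\item Replacing each new predicate by its $\Sigma_n$ definition turns this $\Pinf{2}$ Scott sentence into a $\Pinf{n+2}$ Scott sentence for the reduct $P$.
\end{enumerate}
Running the same argument at level $n+1$ directly in $L$ fails, because elementary embeddings do not preserve $\Pinf{n+1}$ facts when $n\ge 1$. A model of Scott rank at most $n+1$ produced by \cref{lem:havingPimodels} therefore need not be $P$. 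For example, if $E_3$ is regarded as a $\Pi_3$ theory, the model $\mathcal N$ has Scott rank $2$ but is not prime.
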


Under suitable countability assumptions, we obtain that the prime model is only as complicated as the theory it corresponds to.

The paper is organized into five further sections.
We begin with preliminaries that establish notation, precise definitions for key notions, and critical results used consistently throughout the paper.
In Section 2, we prove our technical tools, demonstrating the existence of low Scott rank models given the correct countability assumptions.
We apply these tools to prove that any counterexample to Vaught's conjecture has two models of each parameterized Scott rank and that theories have models of every rank up to their Vaught ordinal.
Section 3 broadens the technical toolkit by first proving that being $\alpha$-equivalent to a Scott rank $\alpha$ model implies being $\alpha+1$-below it.
Using this tool, we show that, given the negation of Vaught's conjecture, there are counterexamples to Vaught's conjecture with precisely one model of some unparamaterized Scott rank in $(\alpha,\omega_1)$ for each $\alpha$.
We then investigate models with different parameterized and unparameterized Scott ranks in the presence of countability and answer a p-group question of Alvir, Csima, and MacLean~\cite{ACM24}.
Finally, correspondence between $\Pinf{\alpha}$-types and Scott rank $\alpha$ models is established and used to investigate order-theoretic trees and their properties.
Section 4 is focused entirely on Ehrenfeucht theories.
We prove $\omega-$VC for Ehrenfeucht theories and perform a finer analysis of the possible Scott complexity spectra in this section.
The final section, Section 5, focuses on prime models of first-order theories and establishes our result on the complexity of the model versus the complexity of the theory.

\section{Preliminaries}
When speaking about $L_{\omega_1\omega}$ formulas, a critical notion is that they have a normal form where all of the quantifiers and infinitary connectives are placed at the front of the formula.
Using this normal form, we can describe the complexity of a formula as follows.

\begin{definition}
	We let $\Pinf{0}=\Sinf{0}$ be the class of finitary quantifier-free formulas. For countable ordinals $\alpha>0$ and $\phi\in L_{\omega_1\omega}$ we let 
	\begin{itemize}
		\item $\phi\in \Sinf{\alpha}$ if it is logically equivalent to a formula of the form $\bigvvee_i \exists \bar x\psi_i(\bar x)$ where $\psi_i\in \Pinf{\gamma_i}$ for some $\gamma_i<\alpha$;
		\item $\phi\in \Pinf{\alpha}$ if it is logically equivalent to a formula of the form $\bigwwedge_i \forall\bar x\psi_i(\bar x)$ where $\psi_i\in \Sinf{\gamma_i}$ for some $\gamma_i<\alpha$.
	\end{itemize}
	Furthermore, $\phi\in \dSinf{\alpha}$ if there is $\psi\in \Sinf{\alpha}$ and $\theta\in \Pinf{\alpha}$ so that $\phi$ is logically equivalent to $\psi\land \theta$.
\end{definition}

Note that the negation of a $\Sinf{\alpha}$ formula is a $ \Pinf{\alpha}$ formula and vice-versa.
Also, $\Sinf{\alpha}, \Pinf{\alpha}\subseteq \dSinf{\alpha}\subseteq \Sinf{\alpha+1}, \Pinf{\alpha+1}$.
A fundamental theorem of Scott \cite{Sco65} is that every countable structure has an $L_{\omega_1\omega}$ formula that describes it up to isomorphism among countable structures.
Such a formula for a structure $\M$ is called a \textbf{Scott sentence} for $\M$.
This gives us a way to associate formulas with structures.
The ranking on formulas, therefore, gives us a ranking on structures by association.
This ranking generally measures the difficulty of describing the structure up to isomorphism.
To be more precise, we will work with three variations of this rank.
Montalban defined the first two in \cite{MonSR}.
The \textbf{(unparamaterized) Scott rank} of a structure, $SR(\M)$, is given by the least $\alpha$ such that $\M$ has a $\Pinf{\alpha+1}$ Scott sentence.
Meanwhile, the \textbf{paramaterized Scott rank} of a structure, $SR_p(\M)$, is given by the least $\alpha$ such that $\M$ has a $\Sinf{\alpha+2}$ Scott sentence.
The parameterized Scott rank can also be understood as the minimal unparamaterized Scott rank of $\M$ after taking some fixed finite set of parameters.
Both of these measures are robust in the sense that there are many other equivalent formulations of these notions.
Notable for our purposes is that $SR(\M)$ is also the least $\alpha$ such that every automorphism orbit in $\M$ is $\Sinf{\alpha}$ definable.
This is further equivalent to saying that the $\Pinf{\alpha}$ type of each tuple is supported by a $\Sinf{\alpha}$ formula. 
Similarly, $SR_p(\M)$ is the least $\alpha$ such that every automorphism orbit in $\M$ is $\Sinf{\alpha}$ definable over some fixed finite set of parameters.
We lastly refer to \textbf{Scott complexity}, $SC(\M)$, a finer invariant introduced in \cite{AGNHTT}.
There it is shown that every structure $\M$ has a simplest Scott sentence of the form $\Sinf{\alpha}, \Pinf{\alpha}$ or $\dSinf{\alpha}$ for some ordinal $\alpha$.
The Scott complexity of $\M$ is the complexity among $\Sinf{\alpha}, \Pinf{\alpha}$ and $\dSinf{\alpha}$ of the simplest Scott sentence for $\M$.
We note that infinite structures have Scott complexity $\Pinf{2}$ at the minimum \cite[Theorem 5.1]{AGNHTT,miller1983}.

We also frequently use the notion of the \textbf{Vaught ordinal} of a theory, in the vein of \cite{GM23}.
Given a theory $T\in  L_{\omega_1\omega}$, we let $vo(T)$ be the least ordinal such that either
\begin{itemize}
	\item there are continuum many $\Sinf{\gamma}$ types realized among the models of $T$
	\item there are countably many models of $T$ and they all have Scott rank less than $\beta$.
\end{itemize}
The Vaught ordinal is so named because of its relationships to Vaught's conjecture.
In particular, it is not hard to show (see \cref{prop:smallvaughts}) that $T$ satisfies Vaught's conjecture if and only if $vo(T)<\omega_1$.

A useful tool to understand lower bounds for Scott rank or Scott complexity is the back-and-forth relations.
\begin{definition}
The \define{back-and-forth relations} $\leq_\alpha$, for a countable ordinal $\alpha < \omega_1$, are defined by:
    \begin{itemize}
        \item $(\mc{M},\bar{a}) \leq_0 (\mc{N},\bar{b})$ if $\bar{a}$ and $\bar{b}$ satisfy the same quantifier-free formulas from among the first $|\bar{a}|$-many formulas.
        \item For $\alpha > 0$, $(\mc{M},\bar{a}) \leq_\alpha (\mc{N},\bar{b})$ if for each $\beta < \alpha$ and $\bar{d} \in \mc{N}$ there is $\bar{c} \in \mc{M}$ such that $(\mc{N},\bar{b} \bar{d}) \leq_\beta (\mc{M},\bar{a} \bar{c})$.
    \end{itemize}
We define $\bar{a} \equiv_\alpha \bar{b}$ if $\bar{a} \leq_\alpha \bar{b}$ and $\bar{b} \leq_\alpha \bar{a}$.
\end{definition}

The relation $(\mc M,\bar a)\leq_\alpha (\mc N,\bar b)$ can be thought of as a game between the $\forall$-player who moves first and picks an ordinal  $\beta<\alpha$ and a tuple of elements $\bar d\in\mc{N}$ and the $\exists$-player who moves in response to this choice of tuple by picking a tuple $\bar c\in \mc{M}$ so that the quantifier-free formulas holding of $\bar b\bar d$ in $\mc N$ and $\bar a \bar c$ in $\mc M$ coincide. The game then continues with $(\mc N,\bar b\bar d)\leq_\beta (\mc M,\bar a\bar c)$ as the starting condition, i.e., the $\forall$-player plays in $\mc M$ and $\gamma<\beta$ and the $\exists$-player responds in $\mc N$. The play continues on like this until the $\forall$-player either runs out of ordinals to play, in which case the $\exists$-player wins, or the $\exists$-player responds with a tuple so that the quantifier-free formulas do not match, in which case the $\forall$-player wins.
The relation $(\mc M,\bar a)\leq_\alpha(\mc N,\bar b)$ holds if the $\exists$-player has a winning strategy in this game.
The following theorem connects these relations with infinitary logic.

\begin{theorem}[\cite{Karp}]
For any non-zero ordinal $\alpha$, structures $\mc{M}$ and $\mc{N}$ and tuples $\bar{a}\in\mc{M}$ and $\bar{b}\in\mc{N}$, the following are equivalent:
    \begin{enumerate}
	\item $(\mc{M},\bar{a})\leq_\alpha (\mc{N},\bar{b})$.
	\item Every $\Pinf{\alpha}$ formula true about $\bar{a}$ in $\mc{M}$ is true about $\bar{b}$ in $\mc{N}$.
	\item Every $\Sinf{\alpha}$ formula true about $\bar{b}$ in $\mc{N}$ is true about $\bar{a}$ in $\mc{M}$.
    \end{enumerate} 
\end{theorem}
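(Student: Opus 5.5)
The plan is to prove the theorem by transfinite induction on $\alpha\geq 1$, establishing for all pairs of tuples simultaneously the equivalence $(1)\Leftrightarrow(2)$. The equivalence $(2)\Leftrightarrow(3)$ is then immediate and needs no induction. Indeed, $\phi\in\Pinf{\alpha}$ iff $\neg\phi\in\Sinf{\alpha}$ up to logical equivalence. So ``every $\Pinf{\alpha}$ formula true of $\bar a$ is true of $\bar b$'' is the contrapositive of ``every $\Sinf{\alpha}$ formula true of $\bar b$ is true of $\bar a$''. Throughout I will use this duality freely at lower levels $\beta<\alpha$, so the inductive hypothesis is available in either form (2) or (3). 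For the level $0$ I use the following observation. Padding a tuple with repetitions does not change which structures it lives in or which formulas it satisfies, but it lengthens the list of quantifier-free formulas checked by $\leq_0$. Hence ``$\bar a\bar c$ and $\bar b\bar d$ agree on all finitary quantifier-free formulas'' can be arranged by padding.

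\textbf{Step 1: $(1)\Rightarrow(2)$.} Let $\phi=\bigwwedge_i\forall\bar x\,\psi_i(\bar x)$ be $\Pinf{\alpha}$ with $\psi_i\in\Sinf{\gamma_i}$, $\gamma_i<\alpha$, and suppose $\M\models\phi(\bar a)$. Fix $i$ and $\bar d\in\N$. If $\gamma_i\geq 1$, the $\exists$-player gives $\bar c$ with $(\N,\bar b\bar d)\leq_{\gamma_i}(\M,\bar a\bar c)$. We have $\M\models\psi_i(\bar a,\bar c)$, and $\psi_i$ is $\Sinf{\gamma_i}$. The inductive hypothesis in form (3) then transfers it to $\N\models\psi_i(\bar b,\bar d)$. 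If $\gamma_i=0$, so that $\psi_i$ is finitary quantifier-free, I first pad $\bar d$ with repeated entries so that $\psi_i$ is among the first $|\bar b\bar d|$ formulas. I then play $\beta=0$ and read off $\psi_i(\bar b,\bar d)$ directly. Hence $\N\models\phi(\bar b)$.

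\textbf{Step 2: $(2)\Rightarrow(1)$, by contrapositive.} Suppose $(\M,\bar a)\not\leq_\alpha(\N,\bar b)$. Then there are $\beta<\alpha$ and $\bar d\in\N$ such that for every $\bar c\in\M$ of the same length, $(\N,\bar b\bar d)\not\leq_\beta(\M,\bar a\bar c)$. By the inductive hypothesis for $\beta\geq1$, or by definition for $\beta=0$, each such $\bar c$ yields a formula $\theta_{\bar c}(\bar y,\bar x)\in\Pinf{\beta}$ (quantifier-free if $\beta=0$) with $\N\models\theta_{\bar c}(\bar b,\bar d)$ and $\M\models\neg\theta_{\bar c}(\bar a,\bar c)$. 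Since the structures are countable, there are only countably many $\bar c$, so $\Theta:=\bigwwedge_{\bar c}\theta_{\bar c}$ is a legitimate $\Pinf{\beta}$ formula. (For $\beta=0$ it is $\Pinf{1}$, which is still fine below.) I then set $\phi(\bar y):=\forall\bar x\,\neg\Theta(\bar y,\bar x)$, whose matrix $\neg\Theta$ is $\Sinf{\max(\beta,1)}$, so $\phi$ is $\Pinf{\max(\beta+1,\alpha)}=\Pinf{\alpha}$; the level-$0$ case needs slight care here, since $\beta=0$ forces only $\alpha\ge 1$ and the bound still holds. This $\phi$ is true of $\bar a$ in $\M$, because each $\bar c$ falsifies its own conjunct $\theta_{\bar c}$. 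It is false of $\bar b$ in $\N$, witnessed by $\bar d$. This contradicts (2).

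The main obstacle is not conceptual but bookkeeping. First, the idiosyncratic definition of $\leq_0$, which only checks the first $|\bar a|$ quantifier-free formulas, must be handled consistently at the bottom of the induction; the padding trick above is how I plan to do this. Second, one must make sure the conjunction in Step 2 is countable. For uncountable structures I would instead index the conjunction by the set of $\Pinf{\beta}$ formulas up to equivalence, or simply restrict to countable structures as in the rest of the paper.
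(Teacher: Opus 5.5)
Your overall scheme is the standard argument behind the result the paper cites from Karp: induction on $\alpha$, the duality giving $(2)\Leftrightarrow(3)$, Step~1, and Step~2 whenever the witnessing $\beta$ is at least $1$. Those parts are fine. The gap is Step~2 with $\beta=0$, which is the only possible case when $\alpha=1$.

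There $\Theta=\bigwwedge_{\bar c}\theta_{\bar c}$ is an infinitary conjunction of quantifier-free formulas, so $\neg\Theta$ is a countable disjunction. It is $\Sinf{1}$ but not $\Sinf{0}$, since $\Sinf{0}$ consists only of finitary quantifier-free formulas. Hence $\forall\bar x\,\neg\Theta$ is a priori only $\Pinf{2}$; the correct bound is $\Pinf{\max(\beta,1)+1}$, not $\Pinf{\max(\beta+1,\alpha)}$. This is harmless for $\alpha\geq 2$. For $\alpha=1$, however, the formula you build does not contradict (2), so the base case of $(2)\Rightarrow(1)$ is not established. The inductive step with witness $\beta=1$ invokes exactly this direction at level $1$, so every higher level inherits the gap. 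The problem is real, not notational: for instance, $\forall x\bigvvee_n P_n(x)$ is not equivalent to any $\Pinf{1}$ sentence.

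The missing observation is that $\leq_0$ only checks a finite list of formulas, fixed by the $\forall$-player's move and independent of $\bar c$. Let $n=|\bar b\bar d|$. Let $\delta(\bar y,\bar x)$ be the conjunction, over the first $n$ quantifier-free formulas $\varphi$, of $\varphi$ or $\neg\varphi$ according to whether $\varphi$ holds of $\bar b\bar d$ in $\mathcal{N}$. Then $(\mathcal{N},\bar b\bar d)\not\leq_0(\mathcal{M},\bar a\bar c)$ says precisely that $\mathcal{M}\models\neg\delta(\bar a,\bar c)$. So the finitary universal formula $\forall\bar x\,\neg\delta(\bar y,\bar x)\in\Pinf{1}$ is true of $\bar a$ in $\mathcal{M}$ and false of $\bar b$ in $\mathcal{N}$, with no countability needed. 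Equivalently, your $\theta_{\bar c}$ all come from a fixed list of $2n$ formulas, so $\Theta$ is equivalent to a finite conjunction. That is exactly the point that must be made.

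A smaller issue concerns your fallback for uncountable structures. Indexing the conjunction by $\Pinf{\beta}$ formulas up to equivalence does not yield an $L_{\omega_1\omega}$ formula, since for $\beta\geq 1$ there can be continuum many of them. Restricting to countable structures, as the paper does throughout, is the right choice.
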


There is a tight connection between winning strategies in the back-and-forth game and Scott rank through the notion of $\alpha$-freeness.

\begin{definition}[\cite{Part2} Chapter II.9]
A tuple $\ba$ is \define{$\alpha$-free} in the structure $\A$ if for every tuple $\bb\in A$ and every $\beta<\alpha$, there are tuples $\ba',\bb'$ such that 
\[\ba\bb\leq_\beta\ba'\bb' \text{ and } \ba\not\leq_\alpha\ba'.\]
\end{definition}

We think of the notion of $\alpha$-freeness as an adaptation of the back-and-forth game, where the $\forall$-player picks in round $0$ an additional parameter $\bar b$ and ordinal $\beta<\alpha$, and the $\exists$-player responds with elements $\bar a'$ and $\bar b'$. Then they play the game $\bar a\leq_\alpha \bar a'$ and $\bar a \bar b\leq_\beta \bar a'\bar b'$. The $\exists$-player wins if they lose the $\leq_\alpha$ game and win the $\leq_\beta$-game. Otherwise, the $\forall$-player wins. A tuple is now $\alpha$-free if the $\exists$-player has a winning strategy in this game.
\begin{theorem}[\cite{MonSR}]
The unparameterized Scott rank of a structure is the least $\alpha$ such that no tuple is $\alpha$-free.
The parameterized Scott rank of a structure is the least $\alpha$ such that no tuple is $\alpha$-free over some set of fixed, finite parameters.
\end{theorem}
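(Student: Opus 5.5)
We plan to prove the theorem through the characterization recalled in the preliminaries: $SR(\mathcal{M})\leq\alpha$ if and only if, for every tuple $\bar a\in\mathcal{M}$, the $\Pinf{\alpha}$-type of $\bar a$ is supported by a $\Sinf{\alpha}$ formula true of $\bar a$. Supported means here that every tuple $\bar a'\in\mathcal{M}$ satisfying that formula satisfies every $\Pinf{\alpha}$ formula true of $\bar a$, i.e.\ $\bar a\leq_\alpha\bar a'$ by Karp's theorem. So it suffices to show that, for each tuple $\bar a$, the following are equivalent: $\bar a$ is $\alpha$-free, and the $\Pinf{\alpha}$-type of $\bar a$ is \emph{not} supported by a $\Sinf{\alpha}$ formula. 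The statement about the least $\alpha$ then follows immediately, and the case $\alpha=0$ is degenerate.

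\emph{Not free implies supported.} Suppose $\bar a$ is not $\alpha$-free. Then there are $\bar b$ and $\beta<\alpha$ such that for all $\bar a'\bar b'\in\mathcal{M}$, $\bar a\bar b\leq_\beta\bar a'\bar b'$ implies $\bar a\leq_\alpha\bar a'$. We form a single $\Pinf{\beta}$ formula $\varphi(\bar x,\bar y)$ true of $\bar a\bar b$ such that any $\bar a'\bar b'\in\mathcal{M}$ satisfying $\varphi$ has $\bar a\bar b\leq_\beta\bar a'\bar b'$. This uses countability of $\mathcal{M}$: for each of the countably many tuples $\bar c\bar d$ with $\bar a\bar b\not\leq_\beta\bar c\bar d$, Karp's theorem gives a $\Pinf{\beta}$ formula true of $\bar a\bar b$ and false of $\bar c\bar d$. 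We take the countable conjunction of these. Then $\exists\bar y\,\varphi(\bar x,\bar y)$ is $\Sinf{\beta+1}\subseteq\Sinf{\alpha}$, holds of $\bar a$, and every $\bar a'$ satisfying it has $\bar a\leq_\alpha\bar a'$. Hence it supports the $\Pinf{\alpha}$-type of $\bar a$.

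\emph{Free implies unsupported.} Suppose $\bar a$ is $\alpha$-free, and suppose toward a contradiction that a $\Sinf{\alpha}$ formula $\psi=\bigvvee_i\exists\bar y\,\theta_i(\bar x,\bar y)$, with $\theta_i\in\Pinf{\beta_i}$ and $\beta_i<\alpha$, holds of $\bar a$ and supports its $\Pinf{\alpha}$-type. Pick $i$ and $\bar b$ with $\theta_i(\bar a,\bar b)$. Applying freeness with this $\bar b$ and $\beta_i$ gives $\bar a'\bar b'$ with $\bar a\bar b\leq_{\beta_i}\bar a'\bar b'$ and $\bar a\not\leq_\alpha\bar a'$. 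By Karp, $\theta_i(\bar a',\bar b')$ holds, so $\psi(\bar a')$ holds. Support then forces $\bar a\leq_\alpha\bar a'$, a contradiction.

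For the parameterized version, we run the same argument in $(\mathcal{M},\bar p)$ for a fixed finite tuple $\bar p$. We use that $SR_p(\mathcal{M})$ is the minimum over $\bar p$ of $SR(\mathcal{M},\bar p)$, and that $\alpha$-freeness over $\bar p$ is exactly $\alpha$-freeness in the expanded structure. The only delicate point is the step producing a single $\Pinf{\beta}$ formula that captures $\leq_\beta$-above $\bar a\bar b$ inside $\mathcal{M}$. It relies essentially on $\mathcal{M}$ being countable, so that only countably many separating formulas are needed. We will state this explicitly, or alternatively invoke Montalbán's canonical $\Pinf{\beta}$ formulas for back-and-forth types.
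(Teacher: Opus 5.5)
The paper does not prove this statement; it quotes it from \cite{MonSR}. Your argument is correct and is essentially the argument of \cite{MonSR}. Not being $\alpha$-free is equivalent to the $\Pinf{\alpha}$-type of $\bar a$ being $\Sinf{\alpha}$-supported within $\mathcal M$. The supporting formula is $\exists\bar y\,\varphi(\bar x,\bar y)$, where $\varphi$ is the $\Pinf{\beta}$ formula that carves out the $\le_\beta$-cone of $\bar a\bar b$ inside the countable structure.

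Be aware that the substantive step is the equivalence you import from the preliminaries, not something you prove. That step says support at every tuple makes $\le_\alpha$ symmetric on $\mathcal M$, hence a back-and-forth system, so the orbits are $\Sinf{\alpha}$-definable.

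The only repair needed is at level $0$. Karp's theorem is stated only for nonzero ordinals, and $\le_0$ compares only the first $|\bar a\bar b|$ quantifier-free formulas. So when some $\beta_i=0$ (which is forced if $\alpha=1$), first pad $\bar b$ with dummy entries so that $\theta_i$ lies among those formulas, and only then apply freeness. In the other direction, note that for $\beta=0$ the separating formulas come from finitely many atomic formulas. Hence $\varphi$ is a finite quantifier-free conjunction, and $\exists\bar y\,\varphi$ is genuinely $\Sinf{1}$.
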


\section{Consequences of $\Sinf{\alpha}$-smallness}

\begin{definition}\label{def:scottspectrum}
	Let $T$ be an $L_{\omega_1\omega}$ sentence. Then the \define{Scott spectrum} of $T$ is the set
	\[ SSp(T)=\{ SR(\A): \A\models T\}.\]
	For $T$ a $\Pinf{\alpha}$ sentence, we say that $SSp(T)$ is \define{trivial} if $T$ intersects $[\alpha,\omega_1)$ as an initial segment.
\end{definition}
One of the main results of this section is the following, which we will prove in \cref{sec:exactSC}.
\ssptrivial
Harrison-Trainor~\cite{HT18} studied possible Scott spectra of $L_{\omega_1\omega}$ sentences. The examples he produced all have continuum many models. \cref{thm:ssptrivial} shows that for all but the basic examples, this is necessary.

\subsection{Type omitting style results}
In this subsection, we prove two of the major technical tools used in this paper.
In particular, we show how to construct a model of bounded Scott rank given sufficient countability conditions. 
These insights are later used to produce concrete Scott analytic results, given assumptions that are based on counting.
We will use the following convention, introduced in \cite{MonICM}, to parsimoniously refer to theories without too many types realized among their models.

\begin{definition}\label{def:small}
  A theory $T$ is $\Sinf{\alpha}$-small if there are only countably many $\Sinf{\alpha}$ types realized among its models. 
\end{definition}

\begin{proposition}\label{prop:smallvaughts}
  Suppose $T$ is a counterexample to Vaught's conjecture, then $T$ is $\Sinf{\alpha}$ small for all $\alpha<\omega_1$.
\end{proposition}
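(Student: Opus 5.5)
We argue by contraposition. Suppose $T$ is not $\Sinf{\alpha}$-small for some countable $\alpha$, so uncountably many $\Sinf{\alpha}$ types $p(\bar x)$ are realized in countable models of $T$. We will show that $T$ then has $2^{\aleph_0}$ countable models, so $T$ is not a counterexample to Vaught's conjecture.

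The plan is the standard descriptive set-theoretic argument. Fix the arity $n$; only countably many arities occur, so for some $n$ uncountably many $\Sinf{\alpha}$ $n$-types are realized. Consider the Polish space $\mathrm{Mod}(T)\times\omega^n$ of pairs $(\M,\bar a)$. The relation $(\M,\bar a)\equiv_\alpha(\N,\bar b)$ is Borel. Indeed, by Karp's theorem two pairs are $\equiv_\alpha$-equivalent iff they satisfy the same $\Sinf{\alpha}$ formulas. Each $\equiv_\alpha$ class is Borel, being defined by the $\Pinf{\alpha+1}$ conjunction of the formulas of its $\Sinf{\alpha}$ and $\Pinf{\alpha}$ types. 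More directly, the relations $\leq_\beta$ for $\beta\le\alpha$ are defined by a countable Borel recursion on $\beta$ with countable quantifiers over tuples, so $\equiv_\alpha$ is a Borel equivalence relation on a standard Borel space.

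By Silver's dichotomy, a Borel (even coanalytic) equivalence relation with uncountably many classes has a perfect set of pairwise inequivalent elements. This gives continuum many pairs $(\M_s,\bar a_s)$ realizing pairwise distinct $\Sinf{\alpha}$ types. Each countable model realizes only countably many $n$-tuples, hence only countably many of these types. So the models $\M_s$ fall into at least $2^{\aleph_0}$ isomorphism classes: a family of size $2^{\aleph_0}$ partitioned into countable pieces needs $2^{\aleph_0}$ pieces. Thus $T$ has continuum many countable models, a contradiction.

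The main care point is justifying that $\equiv_\alpha$ on $\mathrm{Mod}(T)\times\omega^n$ is Borel for countable $\alpha$. This is handled by the induction on $\beta\le\alpha$ described above; alternatively one can cite that each level of the back-and-forth hierarchy is Borel. One might instead try to avoid Silver by working with the space of types, but Silver's theorem is the cleanest route. It is also consistent with the paper's definition of the Vaught ordinal: a counterexample must then have $vo(T)=\omega_1$, since the continuum outcome never occurs and the models have unbounded Scott ranks.
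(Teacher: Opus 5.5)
Your proof is correct and is essentially the paper's argument: $\equiv_\alpha$ is a Borel equivalence relation on tuples from models of $T$, Silver's theorem turns uncountably many classes into continuum many, and since each countable model realizes only countably many classes, $T$ must have continuum many models. Of your two justifications of Borelness, rely on the uniform recursion on $\beta\le\alpha$. Borelness of each individual class does not by itself make the relation Borel, and the conjunction of an entire $\Sinf{\alpha}$ type is uncountable, so it is not literally an $L_{\omega_1\omega}$ formula.
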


\begin{proof}
Say that $T$ is not $\Sinf{\alpha}$ small for some $\alpha<\omega_1$.
Consider the equivalence relation on tuples in models of $T$ given by $\equiv_\alpha$.
This is a Borel equivalence relation, so by Silver's theorem \cite{Sil80}, it either has countably many or continuum many equivalence classes.
By assumption, it does not have countably many equivalence classes, so it has continuum many.
As every model of $T$ only has countably many elements, it can only realize countably many $\equiv_\alpha$ classes.
Therefore, there must be continuum many models of $T$, and so $T$ satisfies Vaught's conjecture.
\end{proof}

The following is a similar theorem to the main type omitting theorem of \cite{MonSR}.
That said, the construction has a different flavor, analogous to the existence of prime models in model theory rather than a first-order type omitting theorem.

\havingPimodels

\begin{proof}
Let $\B\models T$.
Enumerate the $\Pinf{\beta}$ types realized among the models of $T$ as $\{\Gamma_i(\bar{z}_i)\}_{i\in\omega}$.
We use a Henkin construction to create the desired $\A$.
Write $T$ as $\bigwwedge_{i\in\omega} \forall \bar{x}_i \phi_i(\bar{x}_i)$ with $\phi_i\in \Sinf{\beta}$.
We build a set of $\Sinf{\beta}$ formulas $S$ over the vocabulary $\tau$ of $\B$ enriched with countably many constants $C$.
We ensure that $S$ meets the following requirements, most of which are standard:
		\begin{enumerate}
			\item If $\bigvvee \psi_i \in S$, then for some $i$, $\psi_i \in S$.
			\item If $\exists \bar y \psi(\bar y) \in S$, then $\psi(\bar c) \in S$ for some constants $\bar c \in C$.
			\item If $\bigwwedge \psi_i \in S$, then for all $i$, $\psi_i \in S$.
			\item If $\forall \bar y \psi(\bar y) \in S$, then $\psi(\bar c) \in S$ for all $\bar c \in C$.
			\item For every atomic sentence $\psi$ over $\tau \cup C$, either $\psi \in S$ or $\neg \psi \in S$.
			\item For every $i$ and tuple $\bar{c}$ of length $\vert \bar{x}_i \vert$, $\varphi_i(\bar{c})\in S$.
			\item For every tuple $\bar{c}$ and $i\in\omega$ where $|\bar{c}|=|\bar{z}_i|$, there is either a $\theta\in \Gamma_i(\bar{z}_i)$ such that $\lnot\theta(\bar{c})\in S$ or there is some $\chi\in\Gamma_i(\bar{z}_i)$ with $\chi(\bar{c})\in S$ and $\chi(\bar{z}_i)$ semantically entails $\Gamma_i(\bar{z}_i)$ among models of $T$.
		\end{enumerate}
    We will build $S$ in stages, at each stage ensuring that one of the properties is satisfied (i.e., at stage $s$ we ensure that Property $(s\mod 7)+1$ is satisfied). Simultaniously we will make sure at each stage $s$ there is some interpretation $\nu_s:C\to \mc M_s \in Mod(T)$ that assigns the constants we have used so far to a set of elements in some model $\mc M_s$ of $T$ so that $\mc M_s\models S[\frac{C}{\nu_s(C)}]$.
		If we achieve this, then in the limit, the Henkin model $\A$ will satisfy all sentences in $S$ and thus also $T$ by item (6).
		Furthermore, we claim that every $\Pinf{\beta}$ type will be isolated by a $\Sinf{\beta}$ formula, so $SR(\A)\leq \beta$:
		In $\A$, because $\A\models T$, every tuple $\bar c\in \A$ will satisfy one of the $\Gamma_i$.
		Applying item (7) to the pair $\bar c,i$ yields two possibilities.
		The first is that for some $\theta\in\Gamma_i,$ $\A\models\lnot \theta(\bar{c})$; this is impossible as $\Gamma_i$ is assumed to the the $\Pinf{\alpha}$-type of $\bar{c}$.
		This leaves the second possibility, that there is some $\chi\in\Sinf{\beta}$ with $\A\models\chi(\bar{c})$ and for each $\mu\in\Gamma_i(\bar{z}_i)$, $\A\models\forall \bar x\chi(\bar x)\to \mu(\bar x).$
		This exactly states that the $\Pinf{\beta}$ type of $\bar{c}$ is $\Sinf{\beta}$ isolated, as desired.
		
		All that remains is to actually build the promised set of formulas.
		We begin with empty sets $U_0$, $C_0$, and $\nu_0$.
		At each stage we are given $C_s$ a finite subset of $C$, $S_s$, a finite set of  $\Sinf{\alpha}$ formulas only mentioning constants $C_s$, $\M_s$, a model of $T$, and $\nu_s:C_s\to \M_s$ with the property that $\M_s\models S_s(\nu_s(C_s))$.
		At each stage, we address one of the properties $(1)-(7)$, one instance at a time.
		We describe below how $S_s$, $C_s$, $\M_s$ and $\nu_s$ are modified to achieve this.
		For the most part (indeed, for properties $(1)-(6)$), the approach is common to the infinitary type omitting scheme.
		That said, we still include it for completeness.
		
    \begin{enumerate}[align=left,leftmargin=\parindent]
      \item[Property $(1)$]
			Given $\bigvvee \psi_i \in S_s$ we know that $\M_s\models \bigvvee \psi_i (\nu_s(C_s)))$. 
			Pick a $j$ with the property that $\M_s\models \psi_j (\nu_s(C_s)))$ and add $\psi_j$ to $S_s$ to make $S_{s+1}$.
			Let $\M_{s+1}=\M_s$, $\nu_s=\nu_{s+1}$ and $C_{s+1}=C_s$.
			\item[Property $(2)$]
			Given $\exists \bar y \psi(\bar y) \in S_s$ we know that  $\M_s\models  \exists \bar y\psi (\bar{y}, \nu_s(C_s)))$.
			Let $\bar{a}\in\M_s$ have the property that $\M_s\models \psi(\bar{a},\nu_s(C_s))$.
			Take new constants $\bar{d}\in C$, let $C_{s+1}=C_s\cup\{\bar{d}\}$, and $S_{s+1}= S_s\cup\{\psi(\bar{d})\}$.
			Let $\M_{s+1}=\M_s$, and extend $\nu_s$ by defining $\nu_{s+1}(\bar{d})=\bar{a}$.
			\item[Property $(3)$]
			Given $\bigwwedge \psi_i \in S_s$ and $i\in\omega$ let $S_{s+1}=S_s\cup\{\psi_i:i<s\}$, $\M_{s+1}=\M_s$, $C_{s+1}=C_s$ and $\nu_{s+1}=\nu_s$.
			\item[Property $(4)$]
        Given $\forall \bar y \psi(\bar y) \in S_s$, let $S_{s+1}=S_s\cup\{\psi(\bar c): \bar c\in C_s^{<|y|}\}$, $\M_{s+1}=\M_s$, $C_{s+1}=C_s$ and $\nu_{s+1}=\nu_s$.
			\item[Property $(5)$]
			For any atomic sentence $\psi$ and $\bar{c}\in C_s$ of the appropriate length check if $\M_s\models \psi(\nu_s(\bar{c}))$ or if $\M_s\models \lnot\psi(\nu_s(\bar{c}))$.
			In the former case, add $\psi(\bar{c})$ to $S_s$ to obtain $S_{s+1}$ and in the later case  add $\lnot\psi(\bar{c})$ to $S_s$ to obtain $S_{s+1}$.
			Either way, let $\M_{s+1}=\M_s$, $C_{s+1}=C_s$ and $\nu_{s+1}=\nu_s$.
			\item[Property $(6)$]
			Given $\varphi_i$ and $\bar{c}\in C_s$ of the appropriate length $S_{s+1}=S_s\cup\{\varphi_i(\bar{c})\}$, $\M_{s+1}=\M_s$, $C_{s+1}=C_s$ and $\nu_{s+1}=\nu_s$.
			\item[Property $(7)$]
			This is the key property argument and the only property that necessitates shifting $\nu_s$ and $\M_s$.
			Fix a tuple  $\bar{c}\in C_s$ of length $\vert \bar{z} \vert$ and let $\bar{d}$ be the elements of $C_s$ that are not among $\bar{c}$.
			We can write $\bigwedge S_s=\rho(\bar{c},\bar{d})$ where $\rho$ is a $\Sinf{\alpha}$ formula.
			Note that $\exists\bar{y}\rho(\bar{c},\bar{y})$ is also a $\Sinf{\alpha}$ formula that is satisfied in $\M_s$ by $\nu_s(\bar{c})$.
			Consider all possible $\bar{x}$ satisfying $\exists\bar{y}\rho(\bar{x},\bar{y})$ in any model of $T$.
			There are two possibilities.
			The first is that there is some $\theta\in\Gamma_i$ and some $\bar{d}\in \mathcal{N}$ and $\mathcal{N}\models T \land \exists\bar{y}\rho(\bar{d},\bar{y})\land \lnot \theta(\bar{d})$.
			Let $\bar e\in \mathcal{N}$ witness the existential quantifier in $\exists\bar{y}\rho(\bar{d},\bar{y})$.
			In this case, we let $C_{s+1}=C_s$, $S_{s+1}=S_s\cup\{\lnot\theta(\bar{c})\}$, $\M_{s+1}=\mathcal{N}$.
			Note that $\lnot\theta\in \Sinf{\beta}$ as $\theta\in \Pinf{\beta}$, so this is an allowable extension of $T$.
			Finally, let $\nu_{s+1}(\bar{c})=\bar{d}$ and let $\nu_{s+1}(\bar{d})=\bar{e}$ where $\mathcal{N}\models \rho(\bar{d},\bar{e})$.
			The other possibility is that for all $\theta\in\Gamma_i$ and $\mathcal{N}\models T$, $\mathcal{N}\models \forall \bar x\exists\bar{y}\rho(\bar{x},\bar{y})\to\theta(\bar{x})$.
			In this case, we may take $C_{s+1}=C_s$, $S_{s+1}=S_s\cup\{\exists\bar{y}\rho(\bar{c},\bar{y})\}$, $\M_{s+1}=\M_s$ and $\nu_{s+1}=\nu_s$.
			In terms of the outlined properties, we now have $\chi=\exists\bar{y}\rho(\bar{c},\bar{y})$ that holds of $\bar{c}$ that semantically entails every formula in $\Gamma_i$ among models of $T$.
		\end{enumerate}
		
		An easy induction shows that the Henkin model $\A$ obtained from the constants $C=\lim C_s$ will satisfy $S=\lim S_s$ and thus also properties (1)-(7), completing the proof.

\end{proof}

We now prove an analog of the above lemma for parameterized Scott rank.
Mostly, we can rely on the tools already established in the unparameterized setting.

\havingSigmamodels

\begin{proof}
Write $T=\bigvvee_{i}\exists \bar x_i\psi_i(\bar x_i)$ where each $\psi_i\in\Pinf{\beta+1}$.
Say that $\B$ witnesses the consistency of $T$, or, more specifically, $\B\models \exists \bar x_i\psi_i(\bar x_i)$.
Let $\bar b\in \B$ so that $(\B,\bar b)\models \psi_i(\bar b)$.
The structure $(\B,\bar b)$ satisfies the $\Pinf{\beta+1}$ formula $\psi_i(\bar x)$.
The theory $\psi_i(\bar x)$ is $\Sinf{\beta}$-small because for each of its models $(\C,\bar{c})$, $\C\models T$.
By \cref{lem:havingPimodels}, there is a model $(\A,\bar a)$ with $(\A,\bar a)\models \psi_i(\bar a)$ and $SR(\A,\bar a)\leq \beta$.
If we forget the names of the constants for $\bar a$, we obtain a structure $\A$ with $\A\models T$ and $SR_p(\A)\leq \beta$, as desired.
\end{proof}

By the definition of the different notion of Scott ranks, the above lemmas say that $\Sinf{\beta}$-small $\Pinf{\beta+1}$ classes have a model of Scott complexity at most $\Pinf{\beta+1}$, and $\Sinf{\beta}$-small $\Sinf{\beta+2}$ classes have a model of Scott complexity at most $\Sinf{\beta+2}$.
The following example shows that the analogous statement for Scott complexity $\dSinf{\beta+1}$ fails.
\begin{example}\label{ex:cutModels}
  Let $E_3$ be the well-known Ehrenfeucht theory with 3 models in the vocabulary $(<, (c_i)_{i\in\nat})$ with axioms consisting of $\Pi^0_2$ sentences stating that $<$ is a strict dense linear order without endpoints and $c_i< c_j$ if and only if $i<j$. If $U$ is the $\Pinf{2}$ sentence stating that the $c_i$ are unbounded in $<$, then $E_3\land \neg U$ has complexity $\dSinf{2}$ and no model of Scott complexity at most $\dSinf{2}$.
\end{example}
\begin{proof}
The theory $E_3$ has exactly three models:
\begin{enumerate}
	\item $\+U$ in which the $c_i$ are unbounded
	\item $\+L$ in which the $c_i$ are bounded and have a limit realized in $\+L$
	\item $\+N$ in which the $c_i$ are bounded but have no limit realized in $\+N$.
\end{enumerate}
We begin by calculating the Scott complexities of $\+U$, $\+L$, and $\+N$.
The model $\+U$ is distinguished from $\+L$ and $\+N$ by the $\Pinf{2}$ sentence $U$ stating that the $c_i$ are cofinal. Thus $E_3\cap U$ is a $\Pinf{2}$ Scott sentence for $U$. As $\+U$ is infinite, it cannot have a $\Sinf{2}$ Scott sentence and thus the Scott complexity of $\+U$ is $\Pinf{2}$.

To determine the Scott complexity of $\+N$ consider an element $x\in \+N$ to the right of every $c_i$.
We show that $x$ is 1-free.
Let the $\forall$-player play the tuple $x,\bar y,\bar z,\bar w$ where $\bar y $ is below some constant $c_m$, $\bar z$ is to the left of $x$ but above all $c_i$, $\bar w$ is to the right of $x$ and $|\bar y,\bar z, \bar w|=n$.
Let $x',\bar{z}'$ be a tuple in the order of $x,\bar{z}$ between $c_{\max(m,n)}$ and $c_{\max(m,n)+1}$. 
In response, the $\exists$-player will play $x',\bar y,\bar z',\bar w$.
It is straightforward to confirm that this is a winning play as $x',\bar y,\bar z',\bar w$ was selected to agree with $x,\bar y,\bar z,\bar w$ on the first $n$ formulas, so $x$ is 1-free as desired.
This means that $SR(\+N)\geq 2$.
The same argument works over any finite set of parameters and thus, in particular, $SR_p(\+N)\geq 2$. 
Furthermore, $\+N$ is distinguished from $\+U$ and $\+L$ by the sentence 
\[N:=\exists y \bigwwedge_i y>c_i \land \forall z \big( \bigwwedge_j z<c_i \rightarrow (\exists w<z)  \bigwwedge_i w>c_i\big).\]
Hence, $E_3\land N$ is a $\Pinf{3}$ Scott sentence for $\+N$, and so the Scott complexity of $\+N$ is $\Pinf{3}$.

Now, to calculate the Scott complexity of $\+L$, we know that $E_3\land \lnot U\land \lnot N$ is a satisfiable $\Sinf{3}$ sentence that is $\Sinf{1}$-small.
Therefore, there is a model (which must be $\+L$) that has Scott complexity at most $\Sinf{3}$.
Say that the Scott complexity of $\+L$ was at most $\dSinf{2}$.
Then $E_3\land \lnot U\land \lnot L$ would be a satisfiable $\Sinf{3}$ sentence that is $\Sinf{1}$-small, so it should have a model of Scott complexity at most $\Sinf{3}$.
That said, its only model is $\+N$, which has a Scott complexity of exactly $\Pinf{3}$, a contradiction.
Therefore, $\+L$ has Scott complexity exactly $\Sinf{3}$.

At last, consider the theory $E_3\land\lnot U$.
It is $\dSinf{2}$ and has exactly two models, $\+N$ and $\+L$.
These models have Scott complexity $\Pinf{3}$ and $\Sinf{3}$ respectively, demonstrating the claim.
\end{proof}

We further note that there are limitations to these claims near limit levels.

\begin{proposition}
There is a $\Pinf{\omega}$ theory that is $\Sinf{n}$-small for every $n$ that has no model of Scott complexity at most $\Pinf{\omega}$.
There is a $\Sinf{\omega+1}$ theory that is $\Sinf{n}$-small for every $n$ that has no model of Scott complexity at most $\Sinf{\omega+1}$.
\end{proposition}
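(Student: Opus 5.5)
The plan is to build both counterexamples from one ``infinite product'' construction that spreads finite-level behaviour over infinitely many disjoint sorts. The point is that a $\Pinf{\omega}$ sentence is only a countable conjunction of $\Pinf{n}$ sentences for finite $n$. So it is preserved between structures $\M,\M'$ with $\M\equiv_n\M'$ for every $n<\omega$. It therefore suffices to find a $\Pinf{\omega}$ theory $T$, $\Sinf{n}$-small for every $n$, such that every model $\M\models T$ has a non-isomorphic countable model $\M'\models T$ with $\M\equiv_n\M'$ for all finite $n$. Then no model has a Scott sentence of complexity at most $\Pinf{\omega}$.

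\emph{Building blocks.} For each $k$ I would fix a sentence $\varphi_k$ (say $\Pinf{k+2}$) with only countably many models, at least two of them, all pairwise $\equiv_k$. The natural candidates are linear orders of the form $\omega^k\cdot m$ versus $\omega^k\cdot m+\omega^k$, or iterated versions of the $E_3$ configuration of \cref{ex:cutModels}. The vocabulary has unary predicates $P_k$, one per $k$. $T$ says the $P_k$ partition the universe, each relation lives inside a single $P_k$, and $P_k\models\varphi_k$. As a conjunction of $\Pinf{k+2}$ sentences, $T$ is $\Pinf{\omega}$.

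\emph{Smallness.} Fix $n$ and a tuple $\bar a$ in a model $\M$ of $T$; $\bar a$ meets only finitely many pieces. By the usual decomposition of back-and-forth games on disjoint unions, the $\Sinf{n}$ type of $\bar a$ in $\M$ is determined by two things. The first is the $\Sinf{n}$ types of the parts of $\bar a$ inside the pieces $P_k^{\M}$ it meets, or more simply the isomorphism types of those finitely many pieces together with $\bar a$. The second is the $\equiv_n$-classes of all remaining pieces. For $k\geq n$ the remaining pieces have a single $\equiv_n$-class by choice of $\varphi_k$. For $k<n$ there are countably many options, since $\varphi_k$ has countably many models. Hence only countably many $\Sinf{n}$ types are realized.

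\emph{Non-isomorphic $\equiv_n$-twins (main obstacle).} This is where I expect the real difficulty. Replacing $P_k^{\M}$ by another model of $\varphi_k$ gives $\M'\equiv_n\M$ only for $n\leq k$, so a single swap does not give equivalence at every finite level simultaneously. My first attempt would be to make the pieces non-rigid in number: instead of one piece per index, let $T$ demand infinitely many pieces for each $k$, grouped by an equivalence relation $E$ whose classes are models of some $\varphi_k$. The $\Pinf{\omega}$ axioms would say each $k$ occurs infinitely often, but would not constrain how many copies of each model of $\varphi_k$ occur. Given $\M$, let $\M'$ change the multiplicity of one model of $\varphi_k$ for each $k$ simultaneously. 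At level $n$, the pieces with $k<n$ are unaltered up to $\equiv_n$ as long as those multiplicities stay infinite and nonzero. The pieces with $k\geq n$ are all $\equiv_n$ anyway. So the $\exists$-player can match classes and win the $n$-game for every $n$, while $\M'\not\cong\M$. The points to check are that smallness survives this change, which follows from the same finite-support argument, and that $\varphi_k$ can genuinely be chosen with countably many models that are $\equiv_k$ but non-isomorphic.

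\emph{The $\Sinf{\omega+1}$ case.} Here I would relativize with a parameter, mirroring the proof of \cref{lem:havingSigmamodels}. Add a new unary predicate $Q$ and let $T'=\exists x\,(Q(x)\wedge\theta(x))$, where $\theta$ is $\Pinf{\omega}$ and says that $Q=\{x\}$ and the rest of the structure is a model of $T$. Then $T'\in\Sinf{\omega+1}$, and it is $\Sinf{n}$-small because naming one point adds nothing. A $\Sinf{\omega+1}$ Scott sentence $\exists \bar x\,\psi(\bar x)$ with $\psi\in\Pinf{\omega}$ for a model would give a $\Pinf{\omega}$ Scott sentence for the expansion by finitely many constants. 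The twin construction above still applies over any finite set of parameters, since the parameters lie in finitely many pieces, which can be left untouched. This yields the second counterexample.
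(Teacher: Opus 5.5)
\textbf{There is a genuine gap: the twin construction fails.} Your opening reduction is sound. A model that is $\equiv_n$ to a non-isomorphic model of $T$ for every $n$ has no $\Pinf{\omega}$ Scott sentence, and the paper uses the same principle. The problem is the construction meant to supply such twins, and it cannot be patched by checking details. With blocks that each have a finite-level Scott sentence (as all your candidates, $\omega^k\cdot m$ or iterated $E_3$ configurations, do), every model of either version of your $T$ already has a $\Pinf{\omega}$ Scott sentence.

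In the one-piece-per-index version, $\M=\bigsqcup_k P_k^{\M}$ is characterized by $T$ together with the $\bigwwedge_k$ of the Scott sentences of the pieces relativized to $P_k$. In the multiplicity version, $\M$ is determined up to isomorphism by the multiplicities $\mu_k(A)$ with which each model $A$ of $\varphi_k$ occurs as an $E$-class inside $P_k$. Each condition ``$\mu_k(A)=j$'' (finite $j$) or ``$\mu_k(A)=\aleph_0$'' is a finite-level sentence or a countable conjunction of such. So the conjunction over all $k$ and $A$ is again a $\Pinf{\omega}$ Scott sentence.

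Concretely, your step saying the pieces with $k<n$ are unaltered up to $\equiv_n$ ``as long as those multiplicities stay infinite and nonzero'' cannot be met. In a countable structure the only infinite multiplicity is $\aleph_0$, so a multiplicity that stays infinite has not changed. Changing a finite multiplicity $j$ of $A$ at index $k$ is detected by ``at least $j+1$ $E$-classes in $P_k$ satisfy the Scott sentence of $A$'', which has some finite level. Since you alter a multiplicity at every index, the change at index $0$ alone gives $\M\not\equiv_n\M'$ for some finite $n$.

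Moreover, if $\varphi_k$ has infinitely many models separated at a fixed finite level, as the family of all $\omega^k\cdot m$ is, the multiplicity version is not even $\Sinf{n}$-small. Letting the set of occurring isomorphism types range over subsets of $\omega$ gives continuum many $\Sinf{n}$-theories. The same defect sinks your $\Sinf{\omega+1}$ argument, which needs these twins to exist over every finite parameter set.

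\textbf{The missing idea.} $\equiv_\omega$-twins cannot be assembled from blocks that are pinned down at finite levels. They must come from a structure $\A$ whose Scott complexity is already above $\Pinf{\omega}$. The paper takes $T$ to be the $\Pinf{\omega}$ sentence defining the $\equiv_\omega$-class of such an $\A$ (a countable conjunction of definitions of its $\equiv_n$-classes). Smallness is then automatic, because every model of $T$ realizes exactly the $\Sinf{n}$-types realized in $\A$. And a model $\C\models T$ with a $\Pinf{\omega}$ Scott sentence would satisfy $\C\equiv_\omega\A$, hence $\C\cong\A$, a contradiction.

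For the second statement, your relativization with $Q$ is harmless but unnecessary. It also shifts the burden onto twins over arbitrary finite tuples, which an arbitrary $\equiv_\omega$-class need not provide: it may contain a model of complexity exactly $\Sinf{\omega+1}$. The paper gets this from a specific choice, namely $\A$ the prime model of a non-standard completion of Peano arithmetic. Every $\B\equiv_\omega\A$ is a model of the same completion, and by \cite{GLRS} no such model has Scott complexity at most $\Sinf{\omega+1}$. So the same $\Pinf{\omega}$ theory also witnesses the second claim.
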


\begin{proof}
Consider any structure $\A$ with Scott complexity above $\Pinf{\omega}$.
The $\omega$-type of $\A$ is described by a $\Pinf{\omega}$ formula $T$ (see~\cite{CGHT}, for example).
Furthermore, any $\B$ with $\A\equiv_\omega\B$ must realize the same $n$-types as $\A$ for every $n$, so $T$ is $\Sinf{n}$-small for every $n$.
Lastly, there is no structure modeling $T$ of Scott complexity at most $\Pinf{\omega}$, as if there were such a $\C$, $\C\equiv_\omega \A$ would mean $\C\cong\A,$ a contradiction to the assumption.

To see the second claim, we consider a specific structure in the above scheme.
This will, in fact, give us a $\Pinf{\omega}$ theory that is $\Sinf{n}$-small for every $n$ that has no model of Scott complexity  $\Sinf{\omega+1}$.
Let $\A$ be a prime model of a non-standard completion of Peano Arithmetic.
Any $\B$ with $\B\equiv_\omega \A$ will also be a model of this non-standard completion of Peano Arithmetic.
By~\cite{GLRS}, no such structure has Scott complexity at most $\Sinf{\omega+1}$.
\end{proof}

\subsection{Finding exact Scott complexities}\label{sec:exactSC}

The previous subsection was dedicated to constructing structures with given Scott analytic bounds.
This subsection provides a more exact analysis.
In particular, we aim to give conditions for the existence of structures that achieve exact Scott complexity invariants.
We begin with a counting condition that demonstrates the existence of $\Pi$ Scott sentences.

\begin{lemma}\label{lem:smallnessandgreaterscimpliessc}
If $T\in \Pinf{\beta+1}$ has $vo(T)>\beta$, then there is $\B\models T$ with Scott complexity $\Pinf{\beta+1}$.\end{lemma}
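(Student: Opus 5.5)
The plan is to combine \cref{lem:havingPimodels} with a counting argument. First, unpack $vo(T)>\beta$. At no level $\gamma\leq\beta$ are there continuum many $\Sinf{\gamma}$ types, so by Silver's theorem (as in \cref{prop:smallvaughts}) $T$ is $\Sinf{\beta}$-small. Also, $T$ does not have only countably many models all of small Scott rank. Since $T\in\Pinf{\beta+1}$ and is $\Sinf{\beta}$-small, \cref{lem:havingPimodels} gives at least one model of Scott rank $\leq\beta$, i.e.\ one with a $\Pinf{\beta+1}$ Scott sentence. It then suffices to find such a model that has no $\Sinf{\beta+1}$ Scott sentence. This also rules out $\dSinf{\beta}$ and all lower complexities, since these are contained in $\Sinf{\beta+1}$.

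Suppose, for contradiction, that every model $\A\models T$ with $SR(\A)\leq\beta$ has a $\Sinf{\beta+1}$ Scott sentence. Write it as $\bigvvee_i\exists\bar x\,\psi_i(\bar x)$ with $\psi_i\in\Pinf{\beta}$. Some disjunct holds in $\A$, so the single $\Sinf{\beta+1}$ sentence $\exists\bar x\,\psi_i(\bar x)$ is already a Scott sentence for $\A$. Pick $\bar a\in\A$ with $\A\models\psi_i(\bar a)$. Any model of $T$ realizing the $\Pinf{\beta}$ type of $\bar a$ satisfies $\psi_i$ there, hence is isomorphic to $\A$. So such models are determined by a realized $\Pinf{\beta}$ type. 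By $\Sinf{\beta}$-smallness there are only countably many realized $\Pinf{\beta}$ types, hence only countably many such models $\A_0,\A_1,\dots$, with Scott sentences $\phi_0,\phi_1,\dots\in\Sinf{\beta+1}$.

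Now consider $T\land\bigwwedge_k\lnot\phi_k$. This is again $\Pinf{\beta+1}$ and still $\Sinf{\beta}$-small, because its models are models of $T$. If it were consistent, \cref{lem:havingPimodels} would produce a model of $T$ of Scott rank $\leq\beta$ outside the list, which is impossible. Hence $T$ has exactly the countably many models $\A_k$, each with $SR\leq\beta$, and each pinned down by a single $\Pinf{\beta}$ type together with an existential quantifier.

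The final step, which I expect to be the main obstacle, is to contradict $vo(T)>\beta$. We know that $T$ has countably many models, all of rank $\leq\beta$, but the definition of $vo$ asks for Scott rank strictly below $\beta$, so an extra level has to be recovered. I would try to use the $\Sinf{\beta+1}$ Scott sentences: each $\A_k$ is the unique model of $T$ containing a tuple of a fixed $\Pinf{\beta}$ type. From this I would attempt to show that every automorphism orbit, or every $\Pinf{\beta}$ type, in $\A_k$ is supported by a lower-level formula relative to $T$, giving rank $<\beta$. If that fails, I would check whether the intended convention for $vo$ is ``Scott rank $\leq\beta$'', under which the contradiction is immediate. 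Either way, the core of the argument is the countability step above together with the iterated use of \cref{lem:havingPimodels}.
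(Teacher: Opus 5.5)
Your argument is essentially the paper's. The paper removes the countably many models of complexity at most $\dSinf{\beta}$ by conjoining to $T$ the negation of a $\Sinf{\beta+1}$ sentence, then applies \cref{lem:havingPimodels} to the resulting $\Pinf{\beta+1}$, $\Sinf{\beta}$-small theory. You run the same exclusion contrapositively, removing the models of rank at most $\beta$ that have a $\Sinf{\beta+1}$ Scott sentence. Everything before your last paragraph is correct.

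The obstacle you flag in the final step is genuine, and only your second option can work. With the definition of $vo$ as literally stated in the preliminaries (``Scott rank less than''), the lemma is false.
\begin{itemize}
\item Let $T$ be the $\Pinf{2}$ theory of dense linear orders without greatest element from \cref{prop:2ModelsClassified}. Its only models are $\eta$ (complexity $\Pinf{2}$, rank $1$) and $1+\eta$ (complexity $\dSinf{2}$, rank $2$).
\item With finitely many models every level is small, so under that reading $vo(T)=3$.
\item Take $\beta=2$. Then $T\in\Pinf{3}$ and $vo(T)>\beta$, yet no model has complexity $\Pinf{3}$.
\end{itemize}
This is exactly the configuration your contradiction hypothesis leaves you with: countably many models, all of rank at most $\beta$, some of rank exactly $\beta$. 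So no argument can push the ranks below $\beta$.

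Under the reading in the introduction (``no models of Scott rank greater than $vo(T)$''), or equivalently assuming a model of rank $>\beta$, your proof closes immediately.

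The paper's proof silently needs the same reading. Its step ``$T$ must have a model of Scott rank greater than or equal to $\beta$'' does not produce a model outside $S_\beta$. A rank-$\beta$ model can have complexity $\dSinf{\beta}$ and lie in $S_\beta$, as $1+\eta$ does for $\beta=2$. What is actually required is a model of rank strictly greater than $\beta$.
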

\begin{proof}
  Let $S_\beta=\{ \+S: \+S\models T \land SC(\+S)\leq \dSinf{\beta}\}$. This set is countable as $T$ is $\Sinf{\beta}$-small. It is axiomatized by a $\Sinf{\beta+1}$ formula given by the disjunction of the Scott sentences for each $S\in S_\beta$. Note that the intersection of its complement with models of $T$ is non-empty, as $T$ must have a model of Scott rank greater than or equal to $\beta$. So by \cref{lem:havingPimodels}, there is $\B\models T$, with $SR(\B)\leq \beta$ and Scott complexity greater than $\dSinf{\beta}$ (i.e., $\B\not\in S_\beta$). Thus, $\B$ has Scott complexity $\Pinf{\beta+1}$.
\end{proof}
Iterating \cref{lem:smallnessandgreaterscimpliessc} lets us prove \cref{thm:ssptrivial}.
\ssptrivial* 
\begin{proof}
  That $T$ has a model of Scott rank $\alpha$ follows from \cref{lem:smallnessandgreaterscimpliessc}. Fix $\beta\in (\alpha,vo(T))$. Let $S=\{ \+A\models T: SR(\+A)<\beta\}$. As $S$ has at most countably many isomorphism types, $S$ is definable by the $\Pinf{\beta+1}$ formula $\psi=T\land \bigwwedge_{\+A\in S/{\cong}} \neg\phi_\+A$ where $\phi_{\+A}$ is a $\Pinf{\beta}$ Scott sentence for $\+A$. Hence, $\psi$, and thus also $T$, have a model of Scott rank $\beta$ by \cref{lem:smallnessandgreaterscimpliessc}.
\end{proof}
We can push this argument further to guarantee the existence of multiple models if we sacrifice some control over the exact Scott invariants of those models.
This involves carefully chaining multiple applications of \cref{lem:havingPimodels} and \cref{lem:havingSigmamodels}.

\begin{lemma}
If $T\in \Pinf{\beta+1}$ has $vo(T)>\beta+n$ and has at least $2n$ models, then there are at least $2n$ models of Scott sentence complexity at most $\Pinf{\beta+n+1}$ and Scott rank at least $\beta$.
\end{lemma}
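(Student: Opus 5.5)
We argue by induction on $n$, peeling off two models of Scott rank at least $\beta$ at each level $\beta+k$ for $k=0,\dots,n-1$. At each step we alternately apply \cref{lem:havingPimodels} and \cref{lem:havingSigmamodels}.

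\textbf{Base step.} Since $vo(T)>\beta$, \cref{lem:smallnessandgreaterscimpliessc} gives a model $\A_1\models T$ of Scott complexity exactly $\Pinf{\beta+1}$, so $SR(\A_1)=\beta$. Let $\phi_1$ be its $\Pinf{\beta+1}$ Scott sentence. Then $T\land\neg\phi_1$ is $\Sinf{\beta+1}$, and hence also $\Sinf{\beta+2}$. It is consistent because $T$ has at least $2n\geq 2$ models, and it is $\Sinf{\beta}$-small.

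Removing models of Scott rank below $\beta$ takes some care. The set of models of $T$ of complexity at most $\dSinf{\beta}$ is countable by smallness, so as in \cref{lem:smallnessandgreaterscimpliessc} it is defined by a $\Sinf{\beta+1}$ disjunction of their Scott sentences. Conjoining its negation, a $\Pinf{\beta+1}$ sentence, keeps us inside $\Sinf{\beta+2}$. We must check that the resulting theory is still consistent. If it is not, every model of $T$ other than $\A_1$ has rank below $\beta$; since $vo(T)>\beta$ forces a model of rank at least $\beta$, we would need to adjust the count. This is the delicate point discussed below.

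Assuming consistency, \cref{lem:havingSigmamodels} yields $\A_2$ with $SR_p(\A_2)\leq\beta$. Hence $\A_2$ has a $\Sinf{\beta+2}$ Scott sentence, so complexity at most $\Pinf{\beta+3}$, and it is not among the low-complexity models, so its rank is at least $\beta$.

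\textbf{Inductive step.} Having found models $\A_1,\dots,\A_{2k}$, pass to the theory
\[
T_k=T\land\bigwwedge_{j\leq 2k}\neg\phi_{\A_j},
\]
where each $\phi_{\A_j}$ is a Scott sentence of complexity at most $\Pinf{\beta+k+1}$. Then $T_k$ is $\Pinf{\beta+k+1}$ and is $\Sinf{\beta+k}$-small, since $vo(T)>\beta+n>\beta+k$. It is consistent because $T$ has at least $2n>2k$ models.

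Apply \cref{lem:havingPimodels} to $T_k$ at level $\beta+k$, excluding models of rank below $\beta$ via the countable set of their Scott sentences. This gives $\A_{2k+1}$ of rank at most $\beta+k$. Then remove it by conjoining the negation of its $\Pinf{\beta+k+1}$ Scott sentence, obtaining a $\Sinf{\beta+k+2}$ theory. Apply \cref{lem:havingSigmamodels} to get $\A_{2k+2}$ with $SR_p\leq\beta+k$, hence Scott complexity at most $\Pinf{\beta+k+3}$.

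\textbf{Keeping the bounds within $\Pinf{\beta+n+1}$.} The bound for the parameterized step overshoots by two levels. To fix this, I would run the $\Sigma$-step one level lower: apply \cref{lem:havingSigmamodels} at level $\beta+k-1$ where possible, so that the complexity stays at most $\Pinf{\beta+k+2}\subseteq\Pinf{\beta+n+1}$ for $k\leq n-1$. Alternatively, I would use \cref{lem:smallnessandgreaterscimpliessc} directly on $T_k\land\neg\phi_{\A_{2k+1}}$ at level $\beta+k+1$. This requires $vo>\beta+k+1$, which holds since $k+1\leq n$, and the resulting theory is $\Pinf{\beta+k+2}$, giving a model of complexity $\Pinf{\beta+k+2}$. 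This second option seems cleanest: each pair comes from two successive applications of \cref{lem:smallnessandgreaterscimpliessc}, at level $\beta+k$ and then at level $\beta+k+1$, after deleting the previously found models by negated $\Pi$ Scott sentences.

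\textbf{Main obstacle.} The hard part is verifying that after each deletion the remaining theory still has a model of Scott rank at least $\beta$ that is not already listed. This is what the hypothesis of at least $2n$ models is for, together with the requirement that the Vaught ordinal exceeds the current level, which \cref{lem:smallnessandgreaterscimpliessc} needs.

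I would handle it by always deleting all models of rank below $\beta$ up front, which is a countable set by smallness, and then counting. However, the hypothesis counts all models of $T$, including those of rank below $\beta$. If these are insufficient, I would instead argue that the Vaught-ordinal hypothesis already supplies infinitely many models of rank at least $\beta$: either there are uncountably many, or some model has rank at least $vo(T)$.
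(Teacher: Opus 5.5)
Your inductive step as first written is the paper's argument: alternate \cref{lem:havingPimodels} and \cref{lem:havingSigmamodels}, each time deleting $S_\beta$ and the models already found. However, you misstate the invariant that makes it run, and the repair you then adopt breaks it. The alternation is forced by how negated Scott sentences fit. A $\Sigma$-step at level $\gamma$ gives $SR_p\le\gamma$, i.e.\ a $\Sinf{\gamma+2}$ Scott sentence. Its negation is $\Pinf{\gamma+2}$, so it can be conjoined to the $\Pinf{\gamma+2}$ theory of the next $\Pi$-step at level $\gamma+1$. A $\Pi$-step at level $\gamma$ gives a $\Pinf{\gamma+1}$ Scott sentence, whose negation is $\Sinf{\gamma+1}$, so that model can only be deleted inside a $\Sigma$ theory. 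You instead weaken the $\Sigma$-step output to ``at most $\Pinf{\beta+k+3}$''. You then justify $T_k\in\Pinf{\beta+k+1}$ by saying the deleted Scott sentences are ``at most $\Pinf{\beta+k+1}$''. But negating such sentences gives $\Sinf{\beta+k+1}$ conjuncts, so that justification runs the wrong way. Similarly, $T\wedge\neg\phi_1$ is $\dSinf{\beta+1}$, not $\Sinf{\beta+1}$. There is also no two-level overshoot. The last $\Sigma$-step, at level $\beta+n-1$, gives $\Sinf{\beta+n+1}$. This is exactly what the paper's proof delivers: models of complexity at most $\Pinf{\beta+i}$ and at most $\Sinf{\beta+i+1}$ for $1\le i\le n$.

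Your ``cleanest'' repair uses two $\Pi$-steps per pair, at levels $\beta+k$ and $\beta+k+1$, and it fails. The second model of pair $k$ has Scott complexity exactly $\Pinf{\beta+k+2}$. The first step of pair $k+1$, again at level $\beta+k+1$, must delete it from what should be a $\Pinf{\beta+k+2}$ theory, but its negated Scott sentence is $\Sinf{\beta+k+2}$. You would also need $vo>\beta+k+1$ for the pruned theory, not just for $T$. The scheme's conclusion is in fact false. Take $C_2$ from \cref{ex:kcutModels} with $\beta=1$, $n=2$: it is $\Pinf{2}$, has four models, and $vo(C_2)=4>3$. By \cref{prop:kcutAnalysis} its models have complexities $\Pinf{2},\Pinf{3},\Sinf{3},\Sinf{3}$. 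So there is neither a third model of rank at most $2$ nor a model of complexity $\Pinf{4}$.

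Lowering the $\Sigma$-steps fails for the same reason. Two $\Sigma$-steps at one level $\gamma$ would require conjoining a $\Pinf{\gamma+2}$ negation to a $\Sinf{\gamma+2}$ theory. Hence the $\Sigma$-levels must strictly increase from $\beta$, and the $n$-th one yields $\Sinf{\beta+n+1}$. Keep the alternation. If you want the literal bound $\Pinf{\beta+n+1}$, replace only the final $\Sigma$-step by a $\Pi$-step at level $\beta+n$. This is legitimate since all earlier Scott sentences lie in $\Pinf{\beta+n}\cup\Sinf{\beta+n}$ and $T$ is $\Sinf{\beta+n}$-small.

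Finally, your fallback for consistency is false. If $T$ has countably many models, none has rank $\ge vo(T)$. The hypothesis $vo(T)>\beta+n$ only gives uncountably many models or one model of rank $\ge\beta+n$, and $C_2$ shows there need not be infinitely many.
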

\begin{proof}
  Start with the structure $\+B$ with Scott complexity $\Pinf{\beta+1}$ from \cref{lem:smallnessandgreaterscimpliessc}. Let $\psi$ be a Scott sentence for $\+B$ and consider $\neg S_\beta\land T\land \neg \psi$ where $S_\beta$ is the set from the proof of the above lemma. This is a consistent $\Sinf{\beta+2}$ formula as $T$ has models of Scott complexity at least $\Pinf{\beta+1}$. \cref{lem:havingSigmamodels} gives a model of $\neg S_\beta\land T\land \neg \psi$ with Scott complexity less than $\Sinf{\beta+2}$. Call this model $\+A$ and its Scott sentence $\theta$. We then have that $\neg S_\beta\land \neg \psi\land \neg \theta$ is a consistent $\Pinf{\beta+2}$ formula. By \cref{lem:havingPimodels} this sentence has a model with Scott complexity $\Pinf{\beta+2}$. Iterating this process gives models $\B_i$ of complexity at most $\Pinf{\beta+i}$ with $\B_1=\B$ and $\A_i$ of complexity at most $\Sinf{\beta+i+1}$ where $\A_1=\A$. 
  \end{proof}

We now examine the analogous proposition for $\Sigma$ formulas.
This does not work out as nicely as the general situation with $\Pi$ formulas, but there are still useful conclusions to be drawn.
    
\begin{proposition}\label{prop:sigmasmallnessandgreaterscimpliessc}
If $T$ is $\Sinf{\beta+2}$ and has $vo(T)>\beta+2$, then either there exists a structure with Scott complexity exactly $\Sinf{\beta+2}$, or there exists infinitely many structures with parameterized Scott rank $\beta$.
\end{proposition}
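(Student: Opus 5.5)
Assume there are only finitely many models of $T$ with parameterized Scott rank $\beta$ (in fact at most $\beta$), and show that $T$ then has a model of Scott complexity exactly $\Sinf{\beta+2}$. The plan mirrors the proof of \cref{lem:smallnessandgreaterscimpliessc}, with \cref{lem:havingSigmamodels} replacing \cref{lem:havingPimodels}.

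First, since $vo(T)>\beta+2$, $T$ is $\Sinf{\beta+1}$-small, hence $\Sinf{\beta}$-small. It also has uncountably many models or models of Scott rank at least $\beta+2$. Let $S$ be the set of models $\+S\models T$ with $SC(\+S)\le\Pinf{\beta+2}$, i.e.\ with $SR(\+S)\le\beta+1$. Because every such model has its isomorphism type determined by its $\Pinf{\beta+2}$-theory, smallness at level $\beta+1$ makes $S$ countable up to isomorphism. Each member has a $\Pinf{\beta+2}$ Scott sentence $\phi_{\+S}$. Then
\[\psi := T\land \bigwwedge_{\+S\in S/{\cong}}\neg\phi_{\+S}\]
is a conjunction of $T\in\Sinf{\beta+2}$ with a $\Sinf{\beta+3}$ formula. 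This is too high, so I will not do it this way.

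Instead I restrict to the models we must avoid. Let $P$ be the finitely many models with $SR_p\le\beta$, together with the countably many models with $SC\le\dSinf{\beta+1}$, which all have $\Sinf{\beta+2}$ Scott sentences. The key observation is that a model $\A$ with $SR_p(\A)\le\beta$ has Scott complexity at most $\Sinf{\beta+2}$. So if any model in $P$ has Scott complexity exactly $\Sinf{\beta+2}$, we are done. Otherwise every model in $P$ has complexity at most $\Pinf{\beta+1}$ or at most $\dSinf{\beta+1}$, and in particular each has a $\Pinf{\beta+2}$ Scott sentence, indeed a $\dSinf{\beta+1}$ one. Set $\psi := T\land\bigwwedge_{\A\in P}\neg\phi_{\A}$. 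Since $P$ is finite up to the models of Scott rank at most $\beta$, and the countably many models of complexity at most $\dSinf{\beta+1}$ are excluded by a $\Pinf{\beta+2}$ conjunction, $\psi$ is a conjunction of a $\Sinf{\beta+2}$ formula and a $\Pinf{\beta+2}$ formula. Handling this mixed complexity is the main obstacle.

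To fix it, split the excluded models into those with $SR\le\beta$ and those with $SR_p\le\beta$ but $SR>\beta$. The first group consists of models of Scott complexity at most $\dSinf{\beta+1}$. It is countable by smallness, and I would try to exclude it differently: apply \cref{lem:havingSigmamodels} directly to a disjunct $\exists\bar x\,\theta(\bar x)$ of $T$ with $\theta\in\Pinf{\beta+1}$, and keep the resulting model only if it has $SR_p=\beta$ exactly. The second group is finite by hypothesis, so negating their (at most $\Sinf{\beta+2}$, in fact $\dSinf{\beta+1}$) Scott sentences keeps $\psi$ within $\Sinf{\beta+2}$, as a finite conjunction of $\Sinf{\beta+2}$ and $\Pinf{\beta+2}$ formulas restricted appropriately. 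Consistency of $\psi$ comes from $vo(T)>\beta+2$: $T$ has models of Scott rank at least $\beta+2$, and these are not among the excluded ones.

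Applying \cref{lem:havingSigmamodels} to $\psi$ (which is $\Sinf{\beta}$-small) yields a model $\A$ with $SR_p(\A)\le\beta$, so $SC(\A)\le\Sinf{\beta+2}$. Since $\A$ was not excluded, its complexity is not $\le\dSinf{\beta+1}$, hence it is exactly $\Sinf{\beta+2}$. If instead this process keeps producing new models of $SR_p=\beta$ at each iteration, then we obtain infinitely many such models, which is the other alternative in the statement.
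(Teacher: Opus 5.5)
Your proposal has a genuine gap, and it sits exactly at the step you flag as the main obstacle: you never construct a $\Sinf{\beta+2}$ sentence that excludes the models you need to avoid. Two things go wrong.

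First, the negation of the conclusion only gives finitely many models with $SR_p$ \emph{exactly} $\beta$. The parenthetical ``in fact at most $\beta$'' is unjustified, since models with $SR_p<\beta$ may be infinitely many. So your ``second group'' ($SR_p\le\beta$, $SR>\beta$) is not finite by hypothesis. It contains every model with $SR_p<\beta<SR$ (for successor $\beta\geq 2$, e.g.\ every model of complexity exactly $\Sinf{\beta+1}$), and nothing bounds how many of these there are.

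Second, your treatment of the ``first group'' ($SR\le\beta$) is not a proof step. Applying \cref{lem:havingSigmamodels} to a disjunct of $T$ gives no control over what comes out. ``Keep the resulting model only if $SR_p=\beta$'' has no fallback when the lemma returns a model with $SR_p<\beta$ or one you already know. Excluding that group by negating the $\Pinf{\beta+1}$ Scott sentences of its members gives a countable conjunction of $\Sinf{\beta+1}$ formulas, i.e.\ a $\Pinf{\beta+2}$ condition, which is exactly the cost you were trying to avoid. Consequently your final inference ``since $\A$ was not excluded, its complexity is not $\le\dSinf{\beta+1}$'' rests on a $\psi$ that has not been shown to exist in $\Sinf{\beta+2}$.

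The missing idea is to split by \emph{parameterized} rank rather than by $SR$.
\begin{itemize}
\item The set $R_\beta$ of models with $SR_p<\beta$ is countable by $\Sinf{\beta}$-smallness. Each of its members has a $\Sinf{\beta+1}$ Scott sentence, not merely a $\Pinf{\beta+1}$ one. So $\neg R_\beta$ is a $\Pinf{\beta+1}$ condition and can be conjoined with $T$ while staying in $\Sinf{\beta+2}$.
\item The only other models with $SR_p\le\beta$ are those with $SR_p=\beta$. If none of them has complexity $\Sinf{\beta+2}$, each has complexity at most $\dSinf{\beta+1}$. Finitely many of them can then be excluded by finitely many $\dSinf{\beta+1}$ conditions.
\item The resulting sentence is $\Sinf{\beta+2}$. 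It is consistent because a model of $SR\ge\beta+2$ is not excluded, since every excluded model has $SR\le\beta+1$.
\item \cref{lem:havingSigmamodels} then yields a model with $SR_p=\beta$ outside the excluded list.
\end{itemize}

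This is the paper's argument, run iteratively. It builds $\B_0,\B_1,\dots$ by excluding $R_\beta$ together with the previously found $\B_j$, and stops if some $\B_i$ has complexity $\Sinf{\beta+2}$. Your contrapositive framing works equally well once $\psi$ is built this way.
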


\begin{proof}
Let $R_{\beta}=\{\A : \A\models T \land SR_p(\A)<\beta\}$. This set is countable as $T$ is $\Sinf{\beta}$-small and axiomatized by a $\Sinf{\beta+1}$ formula given by the disjunction of all $\Sinf{\beta+1}$ Scott sentences for models in $R_\beta$. The intersection of the complement of $R_\beta$ with models of $T$ is non-empty because $T$ has a model of Scott rank at least $\beta+2$. So by \cref{lem:havingSigmamodels}, there is $\B_0\models T$, with $SR_p(\B_0)\leq\beta$ and $\B_0\not\in R_\beta$. Note that this is the same as saying $SR_p(\B_0)=\beta$.  Consider the case where $\B_0,\cdots,\B_i$ are models of $T$ with parameterized Scott rank $\beta$ that do not have Scott complexity exactly $\Sinf{\beta+2}$. In this case, we note that the complement of $R_\beta$ along with the condition that a model is not isomorphic to any of $\B_0,\dots,\B_i$ is $\Sinf{\beta+2}$. So by \cref{lem:havingSigmamodels}, there is $\B_{i+1}\models T$, with $SR_p(\B_{i+1})=\beta$. Continuing in this fashion, there is either some $i$ with $SC(\B_{i})=\Sinf{\beta+2}$, or the process continues giving infinitely many models with parameterized Scott rank $\beta$.
\end{proof}

Note that the above proposition is equally provable with the weaker assumption that $T$ has some model of parameterized Scott rank at least $\beta$ and is $\Sinf{\beta}$ small.
This makes the assumption more analogous to the unparameterized version.
The asymmetry comes from the use of the unparamaterized Scott rank in the definition of the Vaught ordinal. 

\begin{corollary}
If $T$ is $\Pinf{\beta+1}$ and $vo(T)>\gamma+2$, then there are at least two structures of parameterized Scott rank $\alpha$ for every $\alpha\in[\beta,\gamma)$.
\end{corollary}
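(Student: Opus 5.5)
Fix $\alpha\in[\beta,\gamma)$. Since $vo(T)>\gamma+2>\alpha+2$, the theory $T$ is $\Sinf{\alpha+2}$-small; in particular it is $\Sinf{\alpha}$- and $\Sinf{\alpha+1}$-small. There are therefore only countably many models of Scott rank below $\alpha+2$, since each such model is determined by its $\Pinf{\alpha+2}$-type. Models of parameterized Scott rank $<\alpha$ have unparameterized Scott rank $<\alpha+2$, so the set $R_\alpha=\{\A\models T: SR_p(\A)<\alpha\}$ is countable. It is axiomatized by the countable disjunction of their $\Sinf{\alpha+1}$ Scott sentences, a $\Sinf{\alpha+1}$ formula.

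The first step is to produce one model of parameterized rank exactly $\alpha$. Consider $T\land\lnot\bigvvee_{\A\in R_\alpha}\phi_\A$. Since $\beta\le\alpha$, the sentence $T$ is $\Pinf{\alpha+1}$, and the negated disjunction is $\Pinf{\alpha+1}$, so the whole sentence is $\Pinf{\alpha+1}\subseteq\Sinf{\alpha+2}$. It is consistent because $vo(T)>\alpha+2$ gives a model of Scott rank $\ge\alpha+2$, and such a model has $SR_p\ge\alpha$. By \cref{lem:havingSigmamodels}, this sentence has a model $\B_0$ with $SR_p(\B_0)\le\alpha$. Since $\B_0\notin R_\alpha$, we get $SR_p(\B_0)=\alpha$. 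Alternatively one can use \cref{lem:havingPimodels} here to get $SR(\B_0)\le\alpha$, whence $SR_p(\B_0)\le\alpha$.

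The second step is to find a second model. Let $\psi_0$ be a Scott sentence of $\B_0$. Since $SR(\B_0)\le SR_p(\B_0)+2$, we can take $\psi_0\in\Sinf{\alpha+2}$, and then $\lnot\psi_0\in\Pinf{\alpha+2}$. This is too complex for \cref{lem:havingSigmamodels}. The fix is to choose $\B_0$ with $SR(\B_0)\le\alpha$ via \cref{lem:havingPimodels}, as in the alternative above. Then $\psi_0\in\Pinf{\alpha+1}$, so $\lnot\psi_0\in\Sinf{\alpha+1}$. Now $T\land\lnot\bigvvee_{R_\alpha}\phi_\A\land\lnot\psi_0$ is a conjunction of $\Pinf{\alpha+1}$ and $\Sinf{\alpha+1}$ formulas, hence lies in $\Sinf{\alpha+2}$. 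Applying \cref{lem:havingSigmamodels} gives $\B_1\not\cong\B_0$ with $SR_p(\B_1)=\alpha$, provided this sentence is consistent.

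The main obstacle is that consistency: we need a model of $T$ outside $R_\alpha$ that is not isomorphic to $\B_0$. Any model of Scott rank $\ge\alpha+2$ is not in $R_\alpha$, and it is not $\B_0$ because $SR(\B_0)\le\alpha$. Such a model exists because $vo(T)>\gamma+2\ge\alpha+2$ means $T$ has a model of Scott rank at least $\alpha+2$, which is exactly where the margin $+2$ in the hypothesis is used. This yields the two distinct models of parameterized Scott rank $\alpha$. I would double-check the first step's complexity bookkeeping, namely that $R_\alpha$'s Scott sentences are $\Sinf{\alpha+1}$, so that the negation is $\Pinf{\alpha+1}$ and the $\Pi$-lemma applies with $T$ being $\Sinf{\alpha}$-small.
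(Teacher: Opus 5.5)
Your proof is correct, and it takes a more direct route than the paper.

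\textbf{The paper's route.} It combines two earlier results. \Cref{prop:sigmasmallnessandgreaterscimpliessc} gives either infinitely many models of parameterized rank $\alpha$ (done) or one model of Scott complexity exactly $\Sinf{\alpha+2}$. \Cref{thm:ssptrivial} (or \cref{lem:smallnessandgreaterscimpliessc} when $\alpha=\beta$) then supplies a model of Scott complexity exactly $\Pinf{\alpha+1}$. The two models are distinct because their complexities differ.

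\textbf{Your route.} You essentially run the first step of the iteration inside the proof of \cref{prop:sigmasmallnessandgreaterscimpliessc}, with no case split. The key change is that you use the corollary's extra hypothesis $T\in\Pinf{\beta+1}\subseteq\Pinf{\alpha+1}$, which the proposition lacks. This lets you take the first model $\mathcal{B}_0$ from \cref{lem:havingPimodels} rather than \cref{lem:havingSigmamodels}. Its Scott sentence is then $\Pinf{\alpha+1}$, so excluding it keeps $T\land\lnot\bigvvee_{R_\alpha}\phi_{\mathcal{A}}\land\lnot\psi_0$ inside $\dSinf{\alpha+1}\subseteq\Sinf{\alpha+2}$, where \cref{lem:havingSigmamodels} applies. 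Consistency follows from a model of Scott rank $\geq\alpha+2$.

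\textbf{What each buys.} Because you exclude $R_\alpha$ explicitly, both parameterized ranks are pinned to exactly $\alpha$ by construction. The paper's route instead implicitly uses that a model of Scott complexity exactly $\Pinf{\alpha+1}$ has $SR_p=\alpha$. That rests on the Miller/AGHTT fact that a structure with both $\Sinf{\alpha+1}$ and $\Pinf{\alpha+1}$ Scott sentences has a $\dSinf{\alpha}$ one. In exchange, the paper's route gives finer information: one $\Pinf{\alpha+1}$ model plus either a model of exact complexity $\Sinf{\alpha+2}$ or infinitely many of parameterized rank $\alpha$.

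A small aside on the path you abandoned: a model with $SR_p\le\alpha$ has a $\Sinf{\alpha+2}$ Scott sentence by the definition of $SR_p$. The bound $SR\le SR_p+2$ would only give a $\Pinf{\alpha+3}$ one.
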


\begin{proof}
Fix such an $\alpha$. By \cref{prop:sigmasmallnessandgreaterscimpliessc}, we may assume that there is a structure of $T$ with Scott complexity $\Sinf{\beta+2}$. By \cref{thm:ssptrivial}, there is also a structure of $T$ with Scott complexity $\Pinf{\beta+1}$.
\end{proof}

The above corollary yields a strengthening of Sack's theorem \cite{Sac83}, who showed that a $\Pinf{2}$ counterexample to Vaught's conjecture must have two models of every $\Sigma_2^1$ admissible Scott rank.

\twomodelsvc

We will see in the subsequent section that the above corollary fails for the unparameterized Scott rank.
Indeed, it fails at $\omega_1$ many ranks.

\section{Labeling Back-and-Forth Classes}
This section focuses on conclusions that can be drawn about the back-and-forth relations below the Vaught ordinal.
We begin with some general theory regarding the back-and-forth relations.
Combining this with the work of the previous section, we see four applications.
The first is a contrast to \cref{cor:2ModelsVC}, showing that failure of Vaught's conjecture implies the existence of a counterexample $T$ with many ordinals $\alpha\in\omega_1$ such that there is only one $\M\models T$ with $SR(\M)=\alpha$.
We then provide a sharpening of \cref{prop:sigmasmallnessandgreaterscimpliessc} from the previous section, exploring the necessity of models whose parameterized Scott ranks differ from the unparamaterized Scott rank.
The third subsection gives a procedure to understand the classification of $SR(\M)\leq\alpha$ models in the $\Sinf{\alpha}$-small setting, focusing on particular orderings as key examples.
Finally, we examine non-splitting types and Boolean algebras to answer a question of Harris and Montalb\'an.

Our first proposition has nothing to do with Vaught ordinals or other counting assumptions.
Instead, it is a generally useful fact regarding the behavior of the back-and-forth relations.

\uniqueminSR

\begin{proof}
    We describe a strategy to the $\mathcal{A}\geq_{\alpha+1}\mathcal{B}$ game.
    Let the $\forall$-player select $\bar{a}\in\mathcal{A}$ on their first turn of the game.
    Let $\exists\bar{y}\theta(\bar{x},\bar{y})$ with $\theta\in\Pinf{\beta}$ for $\beta<\alpha$ define the automorphism orbit of $\bar{a}$.
    Find $\bar{c}\in\mathcal{A}$ such that $\mathcal{A}\models\theta(\bar{a},\bar{c})$.
    If the $\forall$-player plays $\bar{a},\bar{c}$ along with the ordinal $\beta$ in the $\mathcal{A}\equiv_{\alpha}\mathcal{B}$ game, the $\exists$-player has some winning response $\bar{b},\bar{d}$.
    We claim that $\bar{b}$ is a winning response to $\bar{a}$ for the $\exists$-player in the $\mathcal{A}\geq_{\alpha+1}\mathcal{B}$ game.
    In particular, we must show that $(\mathcal{A},\bar{a})\leq_{\alpha}(\mathcal{B},\bar{b}).$
    Equivalently, we show that every $\Sinf{\alpha}$ statement $\exists \bar{z} \rho(\bar{x},\bar{z})$ with $\rho\in\Pinf{\gamma}$ for some $\gamma$ with $\beta\leq\gamma<\alpha$ that is true of $\bar{b}$ is also true of $\bar{a}$.
    Consider $\bar{p}$ such that $\mathcal{B}\models\rho(\bar{b},\bar{p}).$
    If the $\forall$-player plays $\bar{b},\bar{d},\bar{p}$ along with the ordinal $\gamma$ in the $\mathcal{A}\equiv_{\alpha}\mathcal{B}$ game, the $\exists$-player has some winning response $\bar{c},\bar{f},\bar{q}$.
    In particular, $\mathcal{A}\models \rho(\bar{c},\bar{q})\land \theta(\bar{c},\bar{f})$.
    This means that, $\mathcal{A}\models \exists\bar{z}\rho(\bar{c},\bar{z})\land \exists\bar{y}\theta(\bar{c},\bar{y})$.
    By the definition of $\theta,$ this means that $\bar{c}\cong\bar{a}$ in $\mathcal{A}$.
    Furthermore, we observe that necessarily $\mathcal{A}\models \exists\bar{z}\rho(\bar{a},\bar{z})$ as well.
    In other words, every  $\Sinf{\alpha}$ statement $\exists \bar{z} \rho(\bar{x},\bar{z})$ with $\rho\in\Pinf\gamma$ that is true of $\bar{b}$ is also true of $\bar{a}$, so $(\mathcal{A},\bar{a})\leq_{\alpha+1}(\mathcal{B},\bar{b}),$ as desired.
\end{proof}

\begin{corollary}\label{cor:onlyOneSRalphaPerClass}
        Let $SR(\mathcal{A})=SR(\mathcal{B})=\alpha$. If $\mathcal{A}\equiv_{\alpha}\mathcal{B}$ then $\mathcal{A}\cong\mathcal{B}$.
\end{corollary}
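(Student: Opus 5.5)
The plan is to combine \cref{prop:uniqueminSR}, applied in both directions, with the characterization of unparameterized Scott rank via $\Pinf{\alpha+1}$ Scott sentences.

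First, since $SR(\mathcal{A})\leq\alpha$ and $\mathcal{A}\equiv_\alpha\mathcal{B}$, \cref{prop:uniqueminSR} gives $\mathcal{A}\geq_{\alpha+1}\mathcal{B}$. By symmetry of $\equiv_\alpha$ and the hypothesis $SR(\mathcal{B})\leq\alpha$, the same proposition with the roles swapped gives $\mathcal{B}\geq_{\alpha+1}\mathcal{A}$. Hence $\mathcal{A}\equiv_{\alpha+1}\mathcal{B}$. Strictly speaking one direction already suffices, as explained next.

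Second, by definition of $SR(\mathcal{A})=\alpha$, the structure $\mathcal{A}$ has a $\Pinf{\alpha+1}$ Scott sentence $\phi_{\mathcal{A}}$. By Karp's theorem, $\mathcal{A}\leq_{\alpha+1}\mathcal{B}$ means that every $\Pinf{\alpha+1}$ sentence true in $\mathcal{A}$ is true in $\mathcal{B}$. So $\mathcal{B}\models\phi_{\mathcal{A}}$, and since $\mathcal{B}$ is countable, $\mathcal{B}\cong\mathcal{A}$.

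The only point requiring care is matching the direction of the relation in \cref{prop:uniqueminSR} (written $\mathcal{A}\geq_{\alpha+1}\mathcal{B}$) with Karp's theorem, which says which structure's $\Pinf{\alpha+1}$ facts transfer to which. Obtaining both $\geq_{\alpha+1}$ and $\leq_{\alpha+1}$, by applying the proposition once in each direction using that both structures have rank $\leq\alpha$, sidesteps this bookkeeping entirely. This also shows that the hypothesis only needs $SR(\mathcal{A}),SR(\mathcal{B})\leq\alpha$, and with the symmetric argument even one of them suffices.
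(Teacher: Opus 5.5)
Your argument is correct and is essentially the paper's proof. The paper applies \cref{prop:uniqueminSR} once, using $SR(\mathcal{A})\leq\alpha$, to get $\mathcal{B}\leq_{\alpha+1}\mathcal{A}$, and then transfers the $\Pinf{\alpha+1}$ Scott sentence of $\mathcal{B}$ to $\mathcal{A}$. That is exactly the mirror image of your one-direction argument, and as you note, only $SR\leq\alpha$ is needed.

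Your closing aside that even one of the two rank hypotheses suffices is false, however. In either direction, one hypothesis feeds \cref{prop:uniqueminSR} and the other supplies the Scott sentence. For a counterexample, take $\eta\equiv_1 1+\eta$ (both realize every finite order type) with $SR(\eta)=1$, the two models from \cref{prop:2ModelsClassified}; yet $\eta\not\cong 1+\eta$.
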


\begin{proof}
    By the above Proposition, $\mathcal{A}\geq_{\alpha+1}\mathcal{B}$.
    Because $SR(\mathcal{B})=\alpha$, $\mathcal{B}$ has a $\Pinf{\alpha+1}$ Scott sentence.
    Therefore, $\mathcal{A}\cong\mathcal{B}$ as desired.
\end{proof}

The corollary above demonstrates that there cannot be more than one simple structure in a given $\equiv_{\alpha}$ class.
This allows us to define the unique $SR(\mathcal{B})=\alpha$ structure in a given $\equiv_{\alpha}$ class, if it exists.

\begin{definition}
For each $\beta\in\omega_1$, a given $\equiv_\beta$-back-and-forth class is \define{labeled} if there is a structure of Scott rank at most $\beta$ in that class.
In such a case, the \define{label} of the class is given by the unique structure of Scott rank at most $\beta$ in that class.
\end{definition}

Labels give us a way to understand the $\Sinf{\beta}$ and $\Pinf{\beta}$ facts in a given $\equiv_\beta$-back-and-forth class in terms of a concrete structure.
Labels always exist under suitable smallness conditions.\footnote{We are grateful to Serafim Batzoglou for pointing out that the assumption that $T$ is $\Pinf{\beta+1}$ is crucial in \cref{lem:smallEasybnfDescription}. The missing assumption was discovered by analyzing a prior version of this paper using GPT 5.6 Sol Pro.}

\begin{lemma}\label{lem:smallEasybnfDescription}
  If $T$ is $\Pinf{\beta+1}$  and $\Sinf{\beta}$-small, then for fixed $\+M\models T$, the set
\[E(\+M,\beta):=\{\+N : \+N\models T \land \+N\equiv_\beta \+M\}\]
is $\Pinf{\beta+1}$.
\end{lemma}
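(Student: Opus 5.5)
The plan is to write $E(\M,\beta)$ explicitly as $T$ conjoined with countably many sentences, each of which is $\Sinf{\beta}$ or $\Pinf{\beta}$. Both of these classes are contained in $\Pinf{\beta+1}$, and $\Pinf{\beta+1}$ is closed under countable conjunctions, so the resulting sentence is $\Pinf{\beta+1}$. The countability of the conjunction is where $\Sinf{\beta}$-smallness enters. Membership in $T$ itself is where the assumption $T\in\Pinf{\beta+1}$ is needed.

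First, by Karp's theorem applied to the empty tuple, $\N\equiv_\beta\M$ holds exactly when $\N$ and $\M$ satisfy the same $\Sinf{\beta}$ sentences. Equivalently, they satisfy the same $\Pinf{\beta}$ sentences, since these are negations of one another. So the $\equiv_\beta$-class of a model is determined by its $\Sinf{\beta}$-type of the empty tuple. Since $T$ is $\Sinf{\beta}$-small, only countably many $\Sinf{\beta}$ types are realized among models of $T$; in particular only countably many such types of the empty tuple occur. Hence the models of $T$ fall into countably many $\equiv_\beta$-classes. Choose representatives $\M_0,\M_1,\dots$ of those classes that are different from the class of $\M$.

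Next, for each $i$, since $\M_i\not\equiv_\beta\M$, there is a $\Sinf{\beta}$ sentence on which they disagree. If $\M$ satisfies it, let $\sigma_i$ be that sentence. Otherwise, let $\sigma_i$ be its negation, which is a $\Pinf{\beta}$ sentence true in $\M$ and false in $\M_i$. Either way, $\sigma_i\in\Sinf{\beta}\cup\Pinf{\beta}\subseteq\Pinf{\beta+1}$. Set
\[\Phi:= T\wedge\bigwwedge_i \sigma_i,\]
which is a countable conjunction of $\Pinf{\beta+1}$ formulas, hence $\Pinf{\beta+1}$. If $\N\in E(\M,\beta)$, then $\N\models T$, and each $\sigma_i$ holds in $\N$ because $\N$ agrees with $\M$ on all $\Sinf{\beta}$ and $\Pinf{\beta}$ sentences. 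Conversely, if $\N\models\Phi$, then $\N\models T$, so $\N\equiv_\beta\M_i$ for some $i$ or $\N\equiv_\beta\M$. The first option fails, since $\N\models\sigma_i$ while $\M_i\not\models\sigma_i$, and $\sigma_i$ is preserved under $\equiv_\beta$.

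I do not expect any real obstacle. The only care needed is to use smallness to make the index set countable; without it we would need uncountably many conjuncts. One must also keep $T$ as an explicit conjunct rather than defining the class purely by the $\sigma_i$, which is why $T\in\Pinf{\beta+1}$ is essential. The degenerate case $\beta=0$ is handled the same way, using quantifier-free sentences.
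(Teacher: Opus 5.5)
Your proposal is correct and is essentially the paper's argument: $\Sinf{\beta}$-smallness gives countably many $\equiv_\beta$-classes among models of $T$. For each class other than that of $\mathcal{M}$ you pick a $\Sinf{\beta}$ or $\Pinf{\beta}$ sentence separating it from $\mathcal{M}$, and you conjoin the versions true in $\mathcal{M}$ with $T$ to obtain a $\Pinf{\beta+1}$ definition; keeping $T$ as an explicit conjunct is exactly where the hypothesis $T\in\Pinf{\beta+1}$ is used.
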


\begin{proof}
Because $T$ is $\Sinf{\beta}$ small, there are only countably many $\equiv_\beta$-classes among the models of $T$.
Enumerate these classes $\{C_i\}_{i\in\omega}$ where $C_0= E(\+M,\beta)$.
Say that $C_i$ for $i>0$ differs from $C_0$ on the $\Pinf{\beta}$ or $\Sinf{\beta}$ formula $\psi_i$.
It is straightforward to see that
\[\+N\in E(\+M,\beta) \iff \+N\models T \land \+N\models\bigwwedge_{i>0} \lnot\psi_i,\]
giving the desired $\Pinf{\beta+1}$ description.
\end{proof}

\begin{corollary}\label{cor:allClassesLabeled}
  If $T$ is $\Pinf{\beta+1}$ and $\Sinf{\beta}$-small, then every $\beta$-back-and-forth class of $T$ is labeled.
\end{corollary}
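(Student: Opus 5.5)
Fix a $\beta$-back-and-forth class of $T$, say $E(\M,\beta)$ for some $\M\models T$. The plan is to show this class contains a model of Scott rank at most $\beta$ by combining \cref{lem:smallEasybnfDescription} with \cref{lem:havingPimodels}.

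First, by \cref{lem:smallEasybnfDescription}, since $T$ is $\Pinf{\beta+1}$ and $\Sinf{\beta}$-small, there is a single $\Pinf{\beta+1}$ sentence $\phi$ whose countable models are exactly the models of $T$ that are $\equiv_\beta$ to $\M$. Concretely,
\[\phi = T\land\bigwwedge_{i>0}\lnot\psi_i.\]
This $\phi$ is consistent, since $\M\models\phi$.

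Next, I check that $\phi$ is itself $\Sinf{\beta}$-small. Every model of $\phi$ is a model of $T$, so the $\Sinf{\beta}$ types realized among models of $\phi$ form a subset of those realized among models of $T$. That set is countable by hypothesis.

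Now \cref{lem:havingPimodels} applies to $\phi$, which is a consistent, $\Sinf{\beta}$-small, $\Pinf{\beta+1}$ sentence. It yields $\A\models\phi$ with $SR(\A)\leq\beta$. Since $\A\models\phi$, we have $\A\models T$ and $\A\equiv_\beta\M$, so $\A$ lies in the class $E(\M,\beta)$, and the class is labeled.

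Finally, \cref{cor:onlyOneSRalphaPerClass} shows that this labeling structure is the unique one of rank at most $\beta$ in the class, so the label is well defined. One small point needs care: that corollary is stated for rank exactly $\alpha$. If $SR(\A)<\beta$, then $\A$ has a $\Pinf{\beta}$ Scott sentence. Any $\B\equiv_\beta\A$ satisfies that sentence, so $\B\cong\A$, and uniqueness still holds.

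The only step needing any thought is the smallness transfer from $T$ to $\phi$, which is immediate; the main work was already done in the two cited lemmas.
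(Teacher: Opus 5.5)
Your proof is correct and follows the paper's own argument, which simply cites \cref{lem:smallEasybnfDescription} to axiomatize the class by a consistent $\Pinf{\beta+1}$ sentence and then \cref{lem:havingPimodels} to obtain a model of Scott rank at most $\beta$ in it; your check that smallness transfers from $T$ to $\phi$ is the detail the paper leaves implicit. The uniqueness paragraph goes beyond what the statement asks, since labeled only requires existence, and your workaround is not needed: \cref{prop:uniqueminSR} is already stated for $SR(\mathcal{A})\leq\alpha$, so it covers the rank-below-$\beta$ case directly.
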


\begin{proof}
This follows directly from \cref{lem:smallEasybnfDescription} and \cref{lem:havingPimodels}.
\end{proof}

We can use labels to further understand the notion of $\alpha$-universality defined in \cite{GR24}.
We recall this definition now.
\begin{definition}
Recall that $S$ is an \define{$\alpha$-universal class} for $K$ if for all $\+A\in K$ there is $\+B\in S$ such that $\+B\geq_{\alpha}\+A$.
\end{definition}
The notion of $\alpha$-universality played a key role in the classification of simple linear orderings from \cite{GR24}.

\begin{corollary}\label{cor:SRClassificationFromSmall}
  If $T$ is $\Pinf{\beta+1}$ and $\Sinf{\beta}$ small, then $T_{\leq\beta} = \{\+M\models T: SR(\+M)\leq \beta\}$ is the set of labels on the $\equiv_\beta$ classes, $L_\beta$. In other words, $T_{\leq\beta}$ is the minimal $(\beta+1)$-universal class for models of $T$.
\end{corollary}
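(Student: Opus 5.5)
We need to prove that $T_{\leq\beta}=L_\beta$ and that this set is the minimal $(\beta+1)$-universal class for models of $T$.

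\textbf{Step 1: $T_{\leq\beta}=L_\beta$.} Each model $\+M\models T$ with $SR(\+M)\leq\beta$ is, by definition, a structure of Scott rank at most $\beta$ in its own $\equiv_\beta$ class. By \cref{cor:onlyOneSRalphaPerClass} it is the unique such structure, so it is the label of that class. The cited corollary is stated with equality of ranks, but its proof only uses that one structure has Scott rank at most $\beta$ and the other has a $\Pinf{\beta+1}$ Scott sentence, so it applies verbatim. Conversely, every label is a model of $T$ of Scott rank at most $\beta$, since labels are taken within the $\equiv_\beta$ classes of models of $T$. Hence $T_{\leq\beta}=L_\beta$.

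\textbf{Step 2: universality.} Let $\+A\models T$ be arbitrary. By \cref{cor:allClassesLabeled}, which is where the hypotheses that $T$ is $\Pinf{\beta+1}$ and $\Sinf{\beta}$-small are used, the $\equiv_\beta$ class of $\+A$ has a label $\+B$. This $\+B$ satisfies $SR(\+B)\leq\beta$ and $\+B\equiv_\beta\+A$. \cref{prop:uniqueminSR} then gives $\+B\geq_{\beta+1}\+A$. So $T_{\leq\beta}$ is a $(\beta+1)$-universal class for models of $T$.

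\textbf{Step 3: minimality.} Let $S\subseteq Mod(T)$ be any $(\beta+1)$-universal class and let $\+B\in T_{\leq\beta}$. There is some $\+C\in S$ with $\+C\geq_{\beta+1}\+B$. Now $\+B$ has a $\Pinf{\beta+1}$ Scott sentence, and $\Pinf{\beta+1}$ facts transfer from $\+B$ to $\+C$. To confirm the direction, recall that $\+C\geq_{\beta+1}\+B$ means $\+B\leq_{\beta+1}\+C$, so by Karp's theorem every $\Pinf{\beta+1}$ sentence true in $\+B$ is true in $\+C$. Hence $\+C\cong\+B$, so $T_{\leq\beta}\subseteq S$ up to isomorphism. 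Together with Step 2, this shows $T_{\leq\beta}$ is the least $(\beta+1)$-universal class.

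The only real subtlety is getting the direction of $\geq_{\beta+1}$ right, both in Step 2 and in the Scott sentence transfer in Step 3. Everything else is assembly of \cref{prop:uniqueminSR}, \cref{cor:onlyOneSRalphaPerClass} and \cref{cor:allClassesLabeled}.
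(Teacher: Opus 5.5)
Your proposal is correct and follows the paper's argument: uniqueness of the Scott-rank-$\leq\beta$ structure in an $\equiv_\beta$ class gives $T_{\leq\beta}=L_\beta$, and \cref{cor:allClassesLabeled} together with \cref{prop:uniqueminSR} gives $(\beta+1)$-universality. Minimality comes from transferring the $\Pinf{\beta+1}$ Scott sentence along $\geq_{\beta+1}$; you handle all the directions correctly, and the only difference is that in Step 1 you observe that $\mathcal{M}\in T_{\leq\beta}$ already labels its own class, whereas the paper redundantly routes that inclusion through \cref{cor:allClassesLabeled}.
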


\begin{proof}
By definition, $L_\beta\subseteq T_{\leq\beta}$, as labels of $\equiv_\beta$ classes must be Scott rank at most $\beta$.
Consider $\+M\in T_{\leq\beta}$.
$\+M$ resides in some $\beta$-back-and-forth class, which is labeled by \cref{cor:allClassesLabeled}.
This means there is $\+N\in L_\beta$ such that $\+M\leq_{\beta+1} \+N$.
Because $\+M\in T_{\leq\beta}$, this gives that $\+M\cong \+N$, so $\+M\in L_\beta$.

\cref{cor:allClassesLabeled} states that every model of $T$ is $(\beta+1)$-below a model of $T_{\leq\beta}$.
In other words, $T_{\leq\beta}$ is a $(\beta+1)$-universal class for models of $T$.
Furthermore, any such $(\beta+1)$-universal class must contain all models in $T_{\leq\beta}$ as these models are only $(\beta+1)$ below themselves.
Therefore, $T_{\leq\beta}$ is the minimum $(\beta+1)$-universal class.
\end{proof}

\subsection{Unparameterized ranks and Vaught's conjecture}
We now have established the needed tools to give an unparamaterized contrast to \cref{cor:2ModelsVC}.
The key is to analyze a minimal counterexample to Vaught's conjecture. 
We define this notion before moving on.

\begin{definition}[\cite{HM77}]
$T$ is a \define{minimal counterexample} to Vaught's conjecture if for any $\varphi\in L_{\omega_1,\omega}$ either $T\land\varphi$ or $T\land\lnot\varphi$ has only countably many models.
\end{definition}

In~\cite{HM77}, Harnik and Makkai showed that if there is any counterexample to Vaught's conjecture, then there must be a minimal one.

\begin{theorem}\label{thm:minimalcealeph1}
Given $\varphi$ a minimal counterexample to Vaught's conjecture, there is a function $G:\omega_1\to\omega_1$ with $G(\alpha)\geq\alpha$ such that for all $\alpha\in\omega_1$, $\varphi$ has exactly one model of unparameterized rank $G(\alpha)$.
\end{theorem}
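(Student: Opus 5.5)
Set $G(\alpha)$ to be a closure point $\delta\ge\alpha$ built by iterating a countable bound on the Scott ranks of the ``countable part'' of $\varphi$. We then show that the label of the unique ``large'' $\equiv_\delta$-class is the only model of Scott rank $\delta$.

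\emph{The large classes.} By \cref{prop:smallvaughts}, $\varphi$ is $\Sinf{\beta}$-small for every $\beta<\omega_1$, so for each $\beta$ there are only countably many $\equiv_\beta$-classes among models of $\varphi$. Each such class $E$ is definable within $\varphi$ by an $L_{\omega_1\omega}$ formula (the conjunction of the $\Pinf{\beta}$ and $\Sinf{\beta}$ facts distinguishing $E$ from the other classes, as in \cref{lem:smallEasybnfDescription}).
\begin{itemize}
\item At least one class has uncountably many models: otherwise $\varphi$ would have only countably many models.
\item At most one does: if $E\neq E'$ both did, the formula defining $E$ would give $\varphi\land\psi$ and $\varphi\land\lnot\psi$ both with uncountably many models, contradicting minimality.
\end{itemize}
Call this unique class $C_\beta$. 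Since $\equiv_{\beta'}$ refines $\equiv_\beta$ for $\beta\le\beta'$, we get $C_{\beta'}\subseteq C_\beta$. The models outside $C_\beta$ form a countable union of classes, each with countably many models, so there are only countably many of them. Let $h(\beta)<\omega_1$ be a strict upper bound on their Scott ranks.

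\emph{The closure point.} Fix $\gamma$ with $\varphi\in\Pinf{\gamma+1}$. Given $\alpha$, set $\delta_0=\max(\alpha,\gamma)+1$ and $\delta_{n+1}=\max(h(\delta_n),\delta_n)+1$, and let $G(\alpha)=\delta=\sup_n\delta_n$, a limit ordinal $\ge\alpha$.

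\emph{No small-rank models outside $C_\delta$.} At a limit, $\equiv_\delta=\bigcap_n\equiv_{\delta_n}$, since the $\forall$-player must choose some $\beta<\delta$. So if $\mathcal N\in C_{\delta_n}$ for every $n$, then comparing $\mathcal N$ with any fixed $\mathcal M\in C_\delta$ gives $\mathcal N\equiv_{\delta_n}\mathcal M$ for all $n$, hence $\mathcal N\equiv_\delta\mathcal M$ and $\mathcal N\in C_\delta$. Therefore every model outside $C_\delta$ lies outside some $C_{\delta_n}$, and so has Scott rank $<h(\delta_n)<\delta$. Hence all models of Scott rank $\ge\delta$ lie in $C_\delta$.

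\emph{Exactly one model of rank $\delta$.} Since $\varphi\in\Pinf{\delta+1}$ and is $\Sinf{\delta}$-small, \cref{cor:allClassesLabeled} gives a label $\mathcal L\in C_\delta$ with $SR(\mathcal L)\le\delta$. By \cref{prop:uniqueminSR}, every model in $C_\delta$ of Scott rank $\le\delta$ is isomorphic to $\mathcal L$, so $\mathcal L$ is the only candidate of rank $\delta$.

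It remains to show $SR(\mathcal L)=\delta$. Suppose instead $SR(\mathcal L)=\delta'<\delta$. Then $\mathcal L$ has a $\Pinf{\delta'+1}$ Scott sentence, so every model $\equiv_{\delta'+1}$ to $\mathcal L$ is isomorphic to $\mathcal L$. But $\mathcal L\in C_\delta\subseteq C_{\delta'+1}$, and $C_{\delta'+1}$ is its own $\equiv_{\delta'+1}$-class with uncountably many models. This is a contradiction, so $\mathcal L$ is the unique model of unparameterized Scott rank $G(\alpha)$.

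The main obstacle is the limit step above: showing that every model outside $C_\delta$ has rank $<\delta$. This needs both the characterization of $\equiv_\delta$ at limits as the intersection of the earlier relations and the nesting of the $C_\beta$. This is why $G(\alpha)$ is taken to be a limit closure point rather than $\alpha$ itself.
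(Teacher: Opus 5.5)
Your proof is correct and follows essentially the same route as the paper: minimality gives a unique uncountable $\equiv_\beta$-class $C_\beta$; you iterate a countable bound on the Scott ranks of the models outside $C_\beta$ up to a closure point, using $C_\lambda=\bigcap_{\beta<\lambda}C_\beta$ at limits; and \cref{prop:uniqueminSR} then leaves exactly one model of rank $G(\alpha)$ in $C_{G(\alpha)}$. Always passing to a limit closure point and starting above $\gamma$ (so that \cref{cor:allClassesLabeled} applies) makes explicit two points the paper's fixed-point argument leaves implicit: that $G(\alpha)\geq\alpha$, and that a model of rank $G(\alpha)$ exists at all.
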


\begin{proof}
Fix $\alpha\in\omega_1$.
List the $\equiv_\alpha$-classes of $\varphi$ as $C_{1,\alpha},C_{2,\alpha},\cdots$.
Note that there are countably many, as $\varphi$ is a counterexample to Vaught's conjecture.
Say that $C_{i,\alpha}$ and $C_{j,\alpha}$ have uncountably many models.
By \cref{lem:smallEasybnfDescription}, $C_{i,\alpha}$ and $C_{j,\alpha}$ are disjoint, axiomatizable subclasses of $\varphi$.
Because $\varphi$ is a minimal counterexample, this is a contradiction.
Therefore, there is a unique $i$ such that $C_{i,\alpha}$ is uncountable.
For each $\alpha$ we write $C_\alpha$ to denote the unique uncountable $\equiv_\alpha$-class of $\varphi$.
Note that if $\lambda$ is a limit ordinal, then $C_\lambda = \bigcap_{\beta<\lambda} C_\beta$, as each $\equiv_\lambda$-class is an intersection of $\equiv_\beta$ classes and the $C_\beta$ are the only ones of the appropriate size.

Define the following function $f:\omega_1\to\omega_1$
\[f(\alpha)=\min\{ \beta\in\omega_1 | (\forall \+M\not\in C_\alpha) SR(\mathcal{M})<\beta\}. \]
This function is well-defined because there are only countably many $\mathcal{M}$ with $\mathcal{M}\not\in C_\alpha$.
Consider a fixed point $f(\alpha)=\alpha$.
In this case, every model $\mathcal{N}$ outside of $C_\alpha$ has Scott rank below $\alpha$.
This means that every $\mathcal{M}$ with $SR(\mathcal{M})\geq\alpha$ has $\mathcal{M}\in C_\alpha$.
However, by \cref{cor:onlyOneSRalphaPerClass}, $C_\alpha$ has at most one model $\mathcal{K}$ with $SR(\mathcal{K})\leq\alpha$.
Because $C_\alpha$ has more than one model, in fact, $SR(\mathcal{K})=\alpha$.
As there are no such models outside of $C_\alpha$, there is only one model of Scott rank $\alpha$ and so $G(\alpha)=\alpha$ has the required properties.
Furthermore, if for some $n\in\omega$, $f^n(\alpha)=f^{n+1}(\alpha)$, then setting $G(\alpha)=f^n(\alpha)$ has the required properties as $f^n(\alpha)$ is an $f$ fixed point.

Consider instead the case where, for all $n$, $f^n(\alpha)<f^{n+1}(\alpha)$.
In this case, we claim that $G(\alpha)=\sup_{n\in\omega} f^n(\alpha)$ has the required properties.
This comes down to showing that every model outside of $C_{G(\alpha)}$ has Scott rank below $G(\alpha)$; from there, the argument above demonstrates the claim.
Let $\mathcal{N}\not\in C_{G(\alpha)}$.
Because $G(\alpha)$ is a limit ordinal, this means that there is some $\beta<G(\alpha)$ where $\mathcal{N}\not\in C_\beta$.
More saliently, there is an $n\in\omega$ such that $\mathcal{N}\not\in C_{f^n(\alpha)}$.
By the definition of $f$, this gives that $SR(\mathcal{N})<f^{n+1}(\alpha)<G(\alpha)$, as desired.
\end{proof}

\subsection{Models for which parameters matter}
We establish a sharpening of \cref{prop:sigmasmallnessandgreaterscimpliessc}.
This stronger result allows us to conclude that there is no avoiding the existence of structures whose parameterized and nonparameterized Scott ranks differ in the context of countability.
For example, counterexamples to Vaught's conjecture must have such models.

\begin{proposition}\label{prop:parametersMatter}
If $T$ is $\Sinf{\beta+2}$ and has $vo(T)>\beta+2$, then either there exists a structure with Scott complexity exactly $\Sinf{\beta+2}$ or there exists infinitely many structures with Scott complexity exactly $\dSinf{\beta+1}$.
\end{proposition}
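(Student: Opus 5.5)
We sharpen the proof of \cref{prop:sigmasmallnessandgreaterscimpliessc}, using \cref{prop:uniqueminSR} to show that every model it produces, other than one of complexity exactly $\Sinf{\beta+2}$, has complexity exactly $\dSinf{\beta+1}$. Suppose no model of $T$ has Scott complexity exactly $\Sinf{\beta+2}$. As in that proof, let $R_\beta$ collect the (countably many) models of $T$ with $SR_p<\beta$, axiomatized by a $\Sinf{\beta+1}$ sentence. We build models $\B_0,\B_1,\dots$ inductively. Each $\B_i$ satisfies the $\Sinf{\beta+2}$ sentence
\[T\land\lnot R_\beta\land\bigwwedge_{j<i}\lnot\phi_{\B_j},\]
where $\phi_{\B_j}$ is a Scott sentence of $\B_j$. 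This sentence is consistent because $vo(T)>\beta+2$ gives models of rank above $\beta+2$, and these are not among the finitely many $\B_j$. Each $\B_i$ is obtained from \cref{lem:havingSigmamodels}, using that $T$ is $\Sinf{\beta}$-small. Then $SR_p(\B_i)=\beta$, so $SC(\B_i)\le\Sinf{\beta+2}$, and by our assumption $SC(\B_i)\le\dSinf{\beta+1}$. Conjoining $\lnot\phi_{\B_j}$ keeps the sentence $\Sinf{\beta+2}$, since $\phi_{\B_j}\in\dSinf{\beta+1}$.

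It then remains to show each $\B_i$ has complexity exactly $\dSinf{\beta+1}$. This amounts to ruling out a $\Pinf{\beta+1}$ or $\Sinf{\beta+1}$ Scott sentence; complexities $\le\Pinf{\beta}$ are excluded because $SR_p(\B_i)=\beta$ forces $SR(\B_i)\ge\beta$.
\begin{itemize}
\item The $\Sinf{\beta+1}$ case is excluded because $\Sinf{\beta+1}$ Scott complexity would give $SR_p(\B_i)<\beta$.
\item The hard case is excluding $SC(\B_i)=\Pinf{\beta+1}$, i.e.\ $SR(\B_i)=\beta$. This is not automatic, so I would change the construction here. Let $E$ be the set of models of $T$ with $SR\le\beta$. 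By \cref{cor:SRClassificationFromSmall} (applicable after restricting to the $\Pinf{\beta+1}$ part, or by \cref{cor:onlyOneSRalphaPerClass} together with $\Sinf{\beta}$-smallness), $E$ contains at most one model per $\equiv_\beta$-class and so is countable. Its complement is $\bigwwedge_{\A\in E}\lnot\phi_\A$ with each $\phi_\A\in\Pinf{\beta+1}$, hence a $\Sinf{\beta+2}$ condition. We add this condition to the sentence at every stage, ensuring $SR(\B_i)\ge\beta+1$ while $SR_p(\B_i)=\beta$.
\end{itemize}

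With $SR(\B_i)\ge\beta+1$ and $SR_p(\B_i)=\beta$, the complexity of $\B_i$ is not $\Pinf{\le\beta+1}$ but is at most $\Sinf{\beta+2}$. Under our assumption it is therefore exactly $\dSinf{\beta+1}$.

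For the enlarged sentence to stay consistent, we need a model outside $R_\beta$, outside $E$, and not among the $\B_j$. This is witnessed by any model of Scott rank $>\beta+2$, which exists since $vo(T)>\beta+2$. The $\B_i$ are pairwise non-isomorphic by construction, so we obtain infinitely many models of complexity exactly $\dSinf{\beta+1}$.

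The expected main obstacle is the countability of $E$ and the precise form of its complement. If the label argument requires $T\in\Pinf{\beta+1}$, I would instead argue directly: two models of rank $\le\beta$ in the same $\equiv_\beta$-class are isomorphic by \cref{cor:onlyOneSRalphaPerClass}, and there are only countably many classes by $\Sinf{\beta}$-smallness. This needs no syntactic hypothesis on $T$.
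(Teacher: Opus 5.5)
There is a genuine gap at the step where you exclude $SR(\B_i)\le\beta$. You write the complement of $E$ as $\bigwwedge_{\A\in E}\lnot\phi_\A$ and call it a $\Sinf{\beta+2}$ condition. But each $\lnot\phi_\A$ is $\Sinf{\beta+1}$, and a countable conjunction of $\Sinf{\beta+1}$ sentences is only $\Pinf{\beta+2}$. The class $\Sinf{\beta+2}$ is closed under countable disjunctions but only under finite conjunctions. Unless $E$ is finite, your enlarged sentence is a $\Sinf{\beta+2}$ sentence conjoined with a $\Pinf{\beta+2}$ sentence. Then \cref{lem:havingSigmamodels} no longer applies, and you lose the bound $SR_p(\B_i)\le\beta$ that the whole construction rests on. Your closing paragraph addresses the countability of $E$, which is fine, but countability is not the issue. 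What you need is a $\Sinf{\beta+2}$ definition of ``$SR>\beta$'' over $T$.

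The missing idea is that $\Sinf{\beta}$-smallness makes each $\equiv_\beta$-class $\Pinf{\beta+1}$-definable relative to $T$. This is the argument in the proof of \cref{lem:smallEasybnfDescription}, which works relative to $T$ even though $T$ is only $\Sinf{\beta+2}$. With it you can repair your route. Let $\delta_C$ be a $\Pinf{\beta+1}$ sentence defining the class $C$ among models of $T$, and let $\A_C$ be its label if it has one. By \cref{cor:onlyOneSRalphaPerClass}, $SR>\beta$ is then equivalent over $T$ to $\bigvvee_C(\delta_C\land\lnot\phi_{\A_C})$, where the disjunct is just $\delta_C$ for unlabeled $C$. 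This is a countable disjunction of $\dSinf{\beta+1}$ sentences, hence $\Sinf{\beta+2}$, and after that your argument goes through.

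The paper avoids defining $SR>\beta$ at all:
\begin{enumerate}
\item It fixes $\B$ of complexity at least $\Sinf{\beta+2}$ and notes that $T\land E(\B,\beta)$ is still $\Sinf{\beta+2}$.
\item It runs the iteration of \cref{prop:sigmasmallnessandgreaterscimpliessc} inside this single class.
\item It uses \cref{cor:onlyOneSRalphaPerClass} to see that at most one of the infinitely many resulting models has $SR\le\beta$, and discards that one.
\end{enumerate}

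A smaller point: your claim that complexity $\Sinf{\beta+1}$ forces $SR_p<\beta$ holds only for successor $\beta$. For limit $\beta$, such a structure has $SR_p=\beta$ and $SR=\beta+1$, so it passes all your filters. The paper's argument does not address this case explicitly either. It can be handled by also excluding the models of complexity $\Sinf{\beta+1}$. These have $SR\le\beta+1$, so by $\Sinf{\beta+1}$-smallness and \cref{cor:onlyOneSRalphaPerClass} there are only countably many, and excluding them is a $\Pinf{\beta+1}$ condition.
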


\begin{proof}
Fix a model $\B\models T$ with Scott complexity at least $\Sinf{\beta+2}$.
By \cref{lem:smallEasybnfDescription} $E(\+B,\beta):=\{\+A : \+A\models T \land \+A\equiv_\beta \+B\}$ is $\Pinf{\beta+1}$ definable.
In particular, $T\land E(\B,\beta)$ is a $\Sinf{\beta+2}$ theory that still has a model of Scott complexity at least $\Sinf{\beta+2}$ and is $\Sinf{\beta}$ small (indeed $vo(T\land E(\B,\beta))>\beta+2$).
By applying \cref{prop:sigmasmallnessandgreaterscimpliessc} to $T\land E(\B,\beta)$, we obtain that there is either a model of Scott complexity $\Sinf{\beta+2}$ that is $\equiv_\beta$ equivalent to $\B$ or there are infinitely many such models with $SR_p(B_i)=\beta$.
In the former case, we are done at once.
In the latter case, by \cref{cor:onlyOneSRalphaPerClass}, at most one of these structures has Scott complexity $\Pinf{\beta+1}$.
In the absence of any models of Scott complexity $\Sinf{\beta+2}$, this means that the remaining infinitely many must have Scott complexity $\dSinf{\beta+1}$, as desired.
\end{proof}

Note that the above proposition works equally well with the assumption that $T$ is $\Sinf{\beta}$-small and has a model of Scott complexity at least $\Sinf{\beta+2}$, just like \cref{prop:sigmasmallnessandgreaterscimpliessc} before it.
The claim mentioned about counterexamples to Vaught's conjecture follows at once.

\begin{corollary}
  If $T$ is a $\Pinf{2}$ counterexample to Vaught's conjecture, then for every $\beta\in\omega_1$ there is a structure $\+M$ such that $SR_p(\+M)=\beta<SR(\+M)$.
\end{corollary}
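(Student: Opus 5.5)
The plan is to apply \cref{prop:parametersMatter} at each level $\beta$ after restricting $T$ to a subclass that rules out its first outcome. Since $T$ is a counterexample, \cref{prop:smallvaughts} shows that $T$ is $\Sinf{\alpha}$-small for every $\alpha<\omega_1$, so $vo(T)=\omega_1$. Fix $\beta\in\omega_1$. The goal is a model $\M$ with $SR_p(\M)=\beta$ and $SR(\M)>\beta$. A structure of Scott complexity exactly $\dSinf{\beta+1}$ qualifies: it has a $\Sinf{\beta+2}$ Scott sentence, so $SR_p\leq\beta$. It has no $\Pinf{\beta+1}$ Scott sentence, so $SR\geq\beta+1$. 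It also has no $\Sinf{\beta+1}$ Scott sentence, so $SR_p\geq\beta$ (for successor $\beta$; the limit case needs the usual care with $\Sinf{\beta+1}$ versus $\Sinf{\beta+2}$). A structure of Scott complexity exactly $\Sinf{\beta+2}$ also qualifies, since then $SR_p=\beta$ and $SR=\beta+1$.

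Step one is to apply \cref{prop:parametersMatter} to $T$ itself, viewed as a $\Sinf{\beta+2}$ sentence; a $\Pinf{2}$ sentence is $\Sinf{\beta+2}$ for every $\beta\geq 1$. Both alternatives of the proposition give a model of the desired kind, so we are done. The hypothesis $vo(T)>\beta+2$ holds because $vo(T)=\omega_1$. In fact the proof of \cref{prop:parametersMatter} only uses the weaker condition noted after it: $T$ is $\Sinf{\beta}$-small and has a model of Scott complexity at least $\Sinf{\beta+2}$. That model exists because a counterexample has models of arbitrarily large Scott rank below $\omega_1$. Otherwise all models would have Scott rank below some $\gamma$, and $\Sinf{\gamma}$-smallness would leave only countably many models.

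The small cases $\beta=0$ (and possibly $\beta=1$) need a separate check, since $T$ is $\Pinf{2}$ and $\Sinf{2}$ need not contain it. Here I would first restrict to a $\Pinf{2}$ subclass of the form $T\land E(\B,\beta)$ given by \cref{lem:smallEasybnfDescription}. Alternatively, I would note that a $\Pinf{2}$ sentence is trivially $\Sinf{3}$ and rerun the proof of \cref{prop:sigmasmallnessandgreaterscimpliessc} with $\B_i$ chosen outside $R_\beta$.

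The main obstacle I expect is confirming the translation between Scott complexity and the two ranks at limit $\beta$. It must give $SR_p=\beta$, not merely $\leq\beta$, for a $\dSinf{\beta+1}$ model. For this I would invoke the characterization that $SR_p(\M)=\beta$ exactly when $\M$ has a $\Sinf{\beta+2}$ but no $\Sinf{\beta+1}$ Scott sentence, together with the fact that $\dSinf{\beta+1}$ complexity excludes a $\Sinf{\beta+1}$ Scott sentence. If that fails at limits, I would fall back on the proof of \cref{prop:sigmasmallnessandgreaterscimpliessc}, which directly produces models with $SR_p=\beta$ outside $R_\beta$. By \cref{cor:onlyOneSRalphaPerClass}, at most one such model in each $\equiv_\beta$ class has $SR=\beta$, so all the others satisfy $SR_p<SR$.
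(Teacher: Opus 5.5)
For $\beta\geq 1$ your argument is the paper's: $vo(T)=\omega_1$, a $\Pinf{2}$ sentence lies in $\Sinf{3}\subseteq\Sinf{\beta+2}$, and both outcomes of \cref{prop:parametersMatter} give a model with $SR_p(\M)=\beta<SR(\M)$.

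Two small corrections to that part. First, a model of Scott complexity exactly $\Sinf{\beta+2}$ has no $\Pinf{\beta+2}$ Scott sentence, since otherwise its complexity would be at most $\dSinf{\beta+1}$. So its Scott rank is $\beta+2$, not $\beta+1$; for instance $\+L$ in \cref{ex:cutModels} has $SR_p(\+L)=1$ and $SR(\+L)=3$. This does not affect the inequality you need. Second, the limit case needs no special care: if $SR_p(\M)=\gamma<\beta$ then $\gamma+2\leq\beta+1$, so $\M$ would have a $\Sinf{\beta+1}$ Scott sentence, and this holds uniformly in $\beta$. Your fallback would in any case have needed the restriction to a single $\equiv_\beta$-class that \cref{prop:parametersMatter} makes before invoking \cref{cor:onlyOneSRalphaPerClass}. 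Without that restriction, each model with $SR_p=\beta$ could be the unique $SR=\beta$ model of its own class.

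The genuine problem is your paragraph on $\beta=0$. Neither patch works. The sentence $T\land E(\B,0)$ is still only $\Pinf{2}$, not $\Sinf{2}$. Rerunning \cref{prop:sigmasmallnessandgreaterscimpliessc} with $T$ viewed as $\Sinf{3}$ produces models with $SR_p=1$, not $0$.

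In fact no patch can work. $SR_p(\M)=0$ means $\M$ has a $\Sinf{2}$ Scott sentence, and no infinite structure has one. Any $\Pinf{2}$ counterexample $T$ can be replaced by $T\land\bigwwedge_n\exists x_1\cdots x_n\bigwedge_{i<j}x_i\neq x_j$. This is still $\Pinf{2}$ and still a counterexample, because a counterexample has only countably many finite models (isomorphism on each finite size is smooth). But it has no finite models, so it has no model with $SR_p=0$.

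So the statement holds only for $\beta\geq1$. That is exactly the range where the paper's one-line derivation from \cref{prop:parametersMatter} applies. You should state that restriction rather than try to cover $\beta=0$.
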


An unexpected consequence of \cref{prop:parametersMatter} is an entirely abstract resolution of a question posed by Alvir, Csima, and MacLean \cite[Question 5.2]{ACM24}.
In particular, we show that there is an Abelian p-group with a non-$\Pi$ Scott complexity.

\abelianpsigma

\begin{proof}
Khisamiev showed that the theory of Abelian p-groups is $\Sinf1-$small~\cite{Khi04}.
It is well known that Abelian p-groups have models of arbitrarily high Scott rank (see e.g.~\cite{ACM24}).
The result follows at once from \cref{prop:parametersMatter}.
\end{proof}

This is, in some sense, an unsatisfactory answer to the question.
Ideally, one would like to have an actual construction of such a group, as it is likely the concrete group could be modified to create non-$\Pi$ Scott complexity at higher levels as well.
While it is now clear that a search for such a non-$\Pi$ Scott complexity construction would not be in vain, we do not attempt the search here.
To be more explicit, we ask the following question, updating Alvir, Csima, and MacLean's question with our new knowledge.

\begin{question}
  Explicitly construct an Abelian p-group of parameterized Scott rank $1$ and Scott rank greater than $1$. Which non-$\Pi$ Scott complexities are attainable for Abelian p-groups?
\end{question}

Given the work of Harris and Montalbán \cite{HM} who calculated that the Vaught ordinal of Boolean algebras is $\omega$, we may analogously conclude that there are $\Sinf{n+2}$ Scott complexity Boolean algebras or infinitely many $\dSinf{n+1}$ Boolean algebras for each $n\in\omega$.

We note that both of the possibilities in \cref{prop:parametersMatter} are necessary, and that this is apparent at even the lowest levels of the hierarchy.
\begin{proposition}
There is a $\Sinf1$-small theory with a Scott complexity $\Sinf3$ model but no Scott complexity $\dSinf{2}$ models.
There is a $\Sinf1$-small theory with no Scott complexity $\Sinf3$ model but infinitely many Scott complexity $\dSinf{2}$ models.
\end{proposition}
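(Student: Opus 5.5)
For the first claim I would reuse \cref{ex:cutModels}. The theory $E_3$ has exactly three models, $\+U$, $\+L$ and $\+N$, and the argument there computes their Scott complexities as $\Pinf{2}$, $\Sinf{3}$ and $\Pinf{3}$. So $E_3$ has a Scott complexity $\Sinf{3}$ model (namely $\+L$) and no $\dSinf{2}$ model. It remains to check that $E_3$ is $\Sinf{1}$-small. I would argue that the quantifier-free type of a tuple in a model of $E_3$ is determined by three things: its order type, its equalities with the constants, and the position of each coordinate relative to the constants. Each coordinate lies below $c_0$, strictly between some $c_i$ and $c_{i+1}$, equal to some $c_i$, or above all the $c_i$, so only countably many positional data occur. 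In a dense linear order without endpoints the $\Sinf{1}$-type of a tuple is determined by this data: anything existential asserted over the tuple and finitely many constants can be realized in any interval by density. Hence only countably many $\Sinf{1}$-types are realized.

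For the second claim I would take $T$ to be the first-order theory of (nontrivial) vector spaces over $\mathbb{Q}$. Its countable models are $\mathbb{Q}^n$ for $1\le n\le\omega$. The steps are as follows.

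\begin{enumerate}
\item $\Sinf{1}$-smallness. The type of a tuple is determined by the rational linear relations it satisfies, and there are countably many such patterns. So $T$ is $\Sinf{1}$-small, and indeed it has only countably many models.
\item The model $\mathbb{Q}^\omega$ has complexity $\Pinf{2}$. The automorphism orbit of a tuple is determined by its quantifier-free linear relations, so it is quantifier-free definable.
\item Each $\mathbb{Q}^n$ with $n$ finite has a $\dSinf{2}$ Scott sentence. It is the axioms of $T$, together with the $\Sinf{2}$ sentence ``there exist $n$ linearly independent vectors'' (independence is a $\Pinf{1}$ condition), together with the $\Pinf{2}$ sentence ``every $n+1$ vectors are dependent'' (dependence is a countable disjunction of atomic equations).
\end{enumerate}

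Thus no model has complexity $\Sinf{3}$. What remains is to show that each $\mathbb{Q}^n$ with $n$ finite has complexity exactly $\dSinf{2}$, which then gives infinitely many such models. It is not $\Sinf{2}$ because it is infinite, citing the fact recalled in the preliminaries. So the main obstacle is ruling out a $\Pinf{2}$ Scott sentence, that is, showing $SR(\mathbb{Q}^n)\ge 2$. My first approach is to show that a nonzero vector $v$ is $1$-free. Given a tuple $\bar b$ and the ordinal $\beta=0$, I would choose $v'$ not in the span of $v,\bar b$, which is a proper subspace. Then $v\bar b\equiv_0 v'\bar b$, since the relevant linear relations agree on the first finitely many atomic formulas. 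At the same time $v\not\le_1 v'$: for instance, in the one-dimensional case the $\Pinf{1}$ formula asserting that every vector is a rational multiple of $v$ separates them.

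If that route runs into trouble when $n>1$, I would instead prove $\mathbb{Q}^{n}\le_2\mathbb{Q}^{n+1}$ directly by a back-and-forth argument. Given finitely many vectors in one space, the other player can match their dependency pattern in the other space, because both spaces have room beyond any finite span. This makes every $\Pinf{2}$ sentence true in $\mathbb{Q}^n$ also true in $\mathbb{Q}^{n+1}$, so no $\Pinf{2}$ sentence can characterize $\mathbb{Q}^n$. As a fallback for this last step, I would cite the computation of Scott complexities of vector spaces in \cite{AGNHTT}.
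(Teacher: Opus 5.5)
Your first half matches the paper, which simply uses \cref{ex:cutModels}. Your check that $E_3$ is $\Sinf{1}$-small fills in something the paper leaves implicit.

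The vector-space example for the second half does satisfy the statement, but your computation of its spectrum is wrong in several places. Step 2 is false. The orbit of a tuple in $\mathbb{Q}^\omega$ is cut out by an infinitary conjunction of (in)equations: linear independence is $\bigwwedge_{\bar q\neq\bar 0}\sum q_ix_i\neq 0$, which is $\Pinf{1}$. A $\Pinf{2}$ Scott sentence needs $\Sinf{1}$-definable orbits. The orbit of an independent pair $(b_1,b_2)$ is not $\Sinf{1}$: any finitary existential formula true of it is also true of $(b_1,qb_1)$ for all but finitely many $q$. To see this, apply the linear map that fixes $b_1$ and a complement of $\mathrm{span}(b_1,b_2)$ and sends $b_2\mapsto qb_1$. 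There is also a quicker argument. $\mathbb{Q}^1$ has the $\Pinf{2}$ Scott sentence $T\land\forall x\forall y\,(x=0\lor\bigvvee_{q}y=qx)$, and by \cref{prop:pi2impprime} a model of complexity $\Pinf{2}$ must be the prime model, which is $\mathbb{Q}^1$. So $\mathbb{Q}^\omega$ is not $\Pinf{2}$; it is in fact $\Pinf{3}$. This also shows that your claim that every finite-dimensional $\mathbb{Q}^n$ is exactly $\dSinf{2}$ fails at $n=1$. To exclude $\Sinf{3}$ for $\mathbb{Q}^\omega$, exhibit its $\Pinf{3}$ Scott sentence $T\land\bigwwedge_n(\text{there are $n$ independent vectors})$.

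More seriously, neither of your arguments for the key step (that $\mathbb{Q}^m$ has no $\Pinf{2}$ Scott sentence) works. A single nonzero vector is never $1$-free: any two nonzero vectors are automorphic, so $v\equiv_\alpha v'$ for all $\alpha$. Moreover, in finite dimension $\mathrm{span}(v,\bar b)$ need not be proper. The separating formula you name is $\Pinf{2}$, and it holds of every nonzero vector of $\mathbb{Q}^1$.

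Your fallback has the direction reversed: $\mathbb{Q}^n\not\le_2\mathbb{Q}^{n+1}$, as your own $\Pinf{2}$ sentence ``every $n+1$ vectors are dependent'' shows. What is true is $\mathbb{Q}^{n+1}\le_2\mathbb{Q}^n$ for $n\geq 1$. For $\bar d\in\mathbb{Q}^n\subseteq\mathbb{Q}^{n+1}$ one has $(\mathbb{Q}^n,\bar d)\equiv_1(\mathbb{Q}^{n+1},\bar d)$. The reason is that existential witnesses in $\mathbb{Q}^{n+1}$ can be pushed into $\mathbb{Q}^n$ by the retraction sending $e_{n+1}$ to a generic $w\in\mathbb{Q}^n$; this is the same fact that gives $\Sinf{1}$-smallness. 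So every $\Pinf{2}$ sentence true in $\mathbb{Q}^{n+1}$ holds in $\mathbb{Q}^n$. Hence $\mathbb{Q}^m$ has no $\Pinf{2}$ Scott sentence for $m\geq 2$; equivalently, an independent pair is $1$-free.

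With these repairs the spectrum is as follows: $\mathbb{Q}^1$ has complexity $\Pinf{2}$, each $\mathbb{Q}^m$ with $2\le m<\omega$ has complexity $\dSinf{2}$, and $\mathbb{Q}^\omega$ has complexity $\Pinf{3}$. That proves the second claim by hand. The paper instead takes linear orderings and cites \cite{GR24}: there are no $\Sinf{3}$ orderings, and those with a nonzero finite number of successivities are $\dSinf{2}$. It cites \cite{richter1981} for $\Sinf{1}$-smallness.
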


\begin{proof}
  The first theory is given by \cref{ex:cutModels}. An example for the second theory is the theory of linear orderings.
It is shown in \cite{GR24} that there is no Scott complexity $\Sinf3$ linear ordering, yet there are countably many Scott complexity $\dSinf2$ linear orderings given by those with a non-zero, finite number of successivities.
The classic fact that linear orderings are $\Sinf1$-small was initially proved by Richter~\cite{richter1981}, albeit not isolated as such.
\end{proof}

\subsection{Classifying models of a given Scott rank}

We now examine an example that previously seemed like a fortunate coincidence, but can now be understood in terms of the larger pattern explained above.
It was shown by Knight \cite{Kni86} that linear orderings are $\Sinf{2}$ small.
Montalban took care to describe the $\equiv_2$ classes explicitly \cite[Section 4]{McountingBF}.
A number of years later, Gonzalez and Rossegger showed the following.
\begin{theorem}[\cite{GR24} Theorem 2.7]
Let
\[K=\{\omega+k, k+\omega^*, k+\zeta+k':k,k'\in\mathbb N\}\cup\{\omega+\omega^*\}\cup\{k:k\in\mathbb N\}\cup\]
\[\{(\sum_{i\in k} n_i + m_i\cdot\eta )+n_{k+1}:n_i,m_i\in\mathbb N,
n_i>m_{i-1},m_{i+1}.\}\]
Then,
\begin{enumerate}
	\item $K$ is the set of all linear orderings with Scott rank at most $2$.
	\item $K$ is a minimal $3$-universal class.
	\item There is exactly one element of $K$ in each $\equiv_2$ class of linear orderings.
\end{enumerate}
\end{theorem}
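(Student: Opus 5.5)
The statement to prove is \cite[Theorem 2.7]{GR24}, but the point here is to recover it from the general machinery above rather than by ad hoc computation. The plan is to combine \cref{cor:SRClassificationFromSmall} with a concrete list of the $\equiv_2$ classes. There are four steps.

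First, I apply \cref{cor:SRClassificationFromSmall} with $\beta=2$. The theory of linear orderings is finitary $\Pi^0_2$, hence $\Pinf{3}$. By Knight \cite{Kni86} it is $\Sinf{2}$-small. Therefore the class of linear orderings of Scott rank at most $2$ is exactly the set $L_2$ of labels of the $\equiv_2$ classes, and it is the minimal $3$-universal class. Once (1) is established, items (2) and (3) follow at once from that corollary together with \cref{cor:allClassesLabeled} and \cref{cor:onlyOneSRalphaPerClass}. So the whole proof reduces to showing that $K$ coincides with $L_2$.

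Second, I show $K\subseteq T_{\leq 2}$, i.e. each listed ordering has Scott rank at most $2$. For this I exhibit a $\Pinf{3}$ Scott sentence, or equivalently check that every automorphism orbit is $\Sinf{2}$-definable. The finite orders and $\omega+k$, $k+\omega^*$, $k+\zeta+k'$, $\omega+\omega^*$ are handled by describing each element by its finite distance to an endpoint or to a definable block. For the sums $n_i+m_i\cdot\eta$, orbits are determined by which finite block or which copy of $\eta$-dense part an element sits in, together with its position inside a finite block. Each of these conditions is $\Sinf{2}$: it says there are exactly $j$ predecessors in the block, and that the dense part is definable by $\Pinf{1}$/$\Sinf{2}$ conditions on successivity. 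The constraint $n_i>m_{i-1},m_{i+1}$ is exactly what makes the finite blocks distinguishable from finite successor chains inside the $m\cdot\eta$ parts, since these have length at most $m$. This is routine bookkeeping.

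Third, I show that $K$ meets every $\equiv_2$ class. Here I use Montalbán's explicit description of the $\equiv_2$ classes \cite[Section 4]{McountingBF}. An $\equiv_2$ class of a linear order is determined by the $\Sinf{2}$ facts it satisfies, namely which finite patterns of maximal finite successor chains and "dense-like" gaps occur. Given an arbitrary linear order $\mc L$, I read off from its $\Sinf{2}$ theory the invariants: finite size, initial and final segment types $\omega$ or $k$, and the finite sequence of admissible chain lengths. I then pick the member of $K$ realizing the same invariants. This is the main obstacle. It requires matching Montalbán's invariants precisely with the normal form in $K$, in particular showing that the condition $n_i>m_{i-1},m_{i+1}$ yields a canonical representative. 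The normalization collapses adjacent blocks that $\Sinf{2}$ formulas cannot separate, e.g. absorbing a short finite block into a neighboring $m\cdot\eta$.

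Fourth, I conclude. By \cref{cor:onlyOneSRalphaPerClass}, members of $K$ in the same $\equiv_2$ class are isomorphic, so each class contains exactly one element of $K$. Combined with the second and third steps, this gives $K=L_2=T_{\leq 2}$, which proves (1)–(3).
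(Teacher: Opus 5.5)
Your proposal is correct and follows the paper's own account of this theorem. Knight's $\Sinf{2}$-smallness with \cref{cor:SRClassificationFromSmall} gives (1)$\Rightarrow$(2), \cref{cor:onlyOneSRalphaPerClass} gives (1)$\Rightarrow$(3), and (1) reduces to finding the Scott-rank-at-most-$2$ member of each $\equiv_2$ class; like the paper, you leave the actual matching against Montalb\'an's $\equiv_2$ classification \cite[Section 4]{McountingBF} as unperformed bookkeeping, so both arguments are reductions rather than self-contained derivations of the explicit list $K$.
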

We can now understand the above theorem as following directly from Knight~\cite{Kni86} and Montalbán~\cite[Section 4]{McountingBF}, even though the original proof does not reckon with these results at all.
In particular, Knight's result that linear orderings are $\Sinf{2}$-small, along with \cref{cor:SRClassificationFromSmall}, immediately gives that $LO_{\leq2}$ is a minimal $3$-universal class (i.e. $(1)$ implies $(2)$ above).
Furthermore, \cref{cor:onlyOneSRalphaPerClass} yields that there can only be one element of $LO_{\leq2}$ in each $\equiv_2$ class (i.e. $(1)$ implies $(3)$ above).
Lastly, to get an explicit description of $K$ (i.e., (1) above), it suffices by \cref{cor:SRClassificationFromSmall} to look at each $\equiv_2$ class and find the simplest structure in it; this can be done by careful examination of Montalban's explicit $\equiv_2$ classification for linear orderings.
In total, this approach gives a general outline for moving from results that count types and back-and-forth equivalence classes to results that classify the simplest structures of a theory.

This idea also provides insights into theories where the Scott analysis is not as completely understood.
For example, we may examine the class of order-theoretic trees $Tr$.
$Tr$ is more formally defined as the set of partial orderings $P$ such that the set of elements less than any given $x\in P$ is linearly ordered.
It was shown by Richter \cite{Ric81} that $Tr$ is $\Sinf{1}$ small.
\cref{cor:SRClassificationFromSmall} gives that there are countably many trees with Scott rank 1, one for each $\equiv_1$ class.
This can be understood in terms of finite trees in the following, more structural manner.

\begin{definition}
Let $FinTr$ denote the finite models of $Tr$.
Given a finite tuple $\bar{U}\in FinTr^{<\omega}$, let $C(\bar{U})\subseteq FinTr$ be the elements of $FinTr$ that do not take any embeddings from any elements of $\bar{U}$.
\end{definition}

\begin{theorem}
There is an mapping $\Theta: Tr_{\leq 1}\to FinTr^{<\omega}$ such that
\begin{enumerate}
	\item $T\cong S \iff \Theta(T)=\Theta(S)$,
	\item $Age(T)= C(\Theta(T))$,
	\item $\bar{U}\in Im(\Theta)$ if and only if $C(\bar{U})$ has the Joint Embedding Property and $\bar{U}$ is an anti-chain.
\end{enumerate}
\end{theorem}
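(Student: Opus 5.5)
The plan is to take $\Theta(T)$ to be the set of $\subseteq$-minimal finite trees that do \emph{not} embed into $T$, and to read everything off the $\equiv_1$ class via the age. The first observation is that for trees (indeed for any relational structures) the $\Sinf{1}$ sentences true in a structure are determined by its age, i.e.\ by which finite substructures it contains. Hence $T\equiv_1 S$ if and only if $Age(T)=Age(S)$. Combined with \cref{cor:onlyOneSRalphaPerClass} (for $\alpha=1$), this gives: for $T,S\in Tr_{\leq 1}$, $T\cong S\iff Age(T)=Age(S)$.

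So (1) reduces to showing that the age of $T$ is coded by a \emph{finite} antichain $\Theta(T)$ with $Age(T)=C(\Theta(T))$, which is (2). Let $M(T)$ be the set of minimal (under embeddability) finite trees not in $Age(T)$. Since $Age(T)$ is closed downward under embedding, $Age(T)$ is exactly the set of finite trees embedding no element of $M(T)$, and $M(T)$ is an antichain by minimality. The remaining issue is finiteness of $M(T)$. Here I would use the $\Pinf{2}$ Scott sentence: $SR(T)\leq1$ means $T$ is axiomatized among countable structures by $\bigwwedge\forall\bar x\,\psi$ with $\psi\in\Sinf{1}$. Then I would argue that if $M(T)$ were infinite, then compactness-style gluing, or Kruskal-type well-quasi-ordering of finite trees under embedding (finite trees as posets embed into one another in a wqo way — this needs checking for rooted forests, where Kruskal applies), yields a contradiction. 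More concretely, every antichain of finite trees is finite by Kruskal's theorem, since finite order-theoretic trees are finite forests under embedding. That would give finiteness of $M(T)$ directly, with no need for the Scott rank hypothesis. So I set $\Theta(T)=M(T)$, listed in some canonical order. This establishes (2), and (1) then follows because ages determine $\Theta$ and conversely.

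For (3), the forward direction is immediate: $C(\Theta(T))=Age(T)$ is an age, hence it has the joint embedding property (JEP), and $\Theta(T)$ is an antichain. The converse is the main obstacle. Given an antichain $\bar U$ with $C(\bar U)$ satisfying JEP, Fraïssé's observation gives a countable tree $S$ with $Age(S)=C(\bar U)$: take a union of a chain of finite trees exhausting the countably many isomorphism types, using JEP. Then the $\equiv_1$ class of $S$ is nonempty, and by \cref{cor:allClassesLabeled} (trees are $\Sinf{1}$-small by Richter, and $Tr$ is $\Pinf{2}$) it has a label $T$ with $SR(T)\leq1$ and $Age(T)=Age(S)=C(\bar U)$. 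Finally, $\Theta(T)=\bar U$: the minimal non-members of $C(\bar U)$ are exactly the elements of $\bar U$, because the elements of $\bar U$ are non-members, they are minimal among non-members since $\bar U$ is an antichain, and any non-member embeds some $U_i$. The delicate points I expect to check carefully are the Kruskal application (rooted forests versus arbitrary finite tree posets, which are disjoint unions of rooted trees) and that JEP alone suffices to realize $C(\bar U)$ as an age. The latter needs $C(\bar U)$ to be closed under substructures, which holds automatically, and to be countable, which also holds.
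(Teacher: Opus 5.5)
Your proposal is correct and follows essentially the same route as the paper: $\Theta(T)$ is the set of embeddability-minimal finite trees outside $Age(T)$, which is finite by Kruskal's theorem; (1) follows from $Age(T)=Age(S)\Rightarrow T\equiv_1 S$ together with \cref{cor:onlyOneSRalphaPerClass}; and the converse of (3) uses Fra\"iss\'e to get a tree with age $C(\bar U)$ and then takes the Scott rank $\leq 1$ label of its $\equiv_1$ class, using Richter's $\Sinf{1}$-smallness. Your explicit check that the minimal non-members of $C(\bar U)$ are exactly the elements of the antichain $\bar U$ fills in a step that the paper leaves implicit.
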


\begin{proof}
We begin by defining $\Theta$.
Fix $T\models Tr$ and consider $X(T)$, the complement of $Age(T)$ in $FinTr$.
By Kruskal's tree theorem \cite{Kru60}, $FinTr$ is well quasi-ordered by embeddability.
In particular, $X(T)$ is empty or has finitely many embeddability-minimal elements.
If $X(T)$ is empty, we let $\Theta(T)=\emptyset$ (note that $C(\emptyset)=FinTr$).
If $X(T)$ has finitely many embeddability-minimal elements, we let $\Theta(T)$ be the collection of those elements.

We now prove the claimed properties for $\Theta$.
We begin with (2).
In the case that $\Theta(T)=\emptyset$, we have that $C(\Theta(T))=FinTr$.
We also have that $X(T)$ is empty, or, what is the same, $Age(T)=FinTr$.
Now say that $\Theta(T)$ is not empty.
Note that $X(T)$ is upwards embeddability-closed.
In particular, it is the upward closure of its minimal elements, $\Theta(T)$.
This means that $Age(T)$ is the complement of the embeddability cones above $\Theta(T)$.
By defintion, this gives that $Age(T)= C(\Theta(T))$.

We now show (1).
Because $Age(T)$ is isomorphism invariant, it follows at once that $\Theta(T)$ is too.
Now consider $T$ and $S$ with $\Theta(T)=\Theta(S)$.
By (2), $Age(T)$ is determined by $\Theta(T)$, so we have that $Age(T)=Age(S)$.
We now demonstrate that $Age(T)=Age(S)$ implies that $T\cong S$.
More specifically, we show that $T\equiv_1S$ and note that \cref{cor:onlyOneSRalphaPerClass} yields $T\cong S$.
Say that the $\forall$-player plays $\bar{p}\in T$ on their move.
$\bar{p}$ has an induced ordering on it, making it an element of $Age(T)$.
Because $Age(T)=Age(S)$, we obtain that there is a $\bar{q}\in S$ with the same induced ordering.
The $\exists$-player may play this $\bar{q}$ in response, winning the game.
A symmetrical strategy gives that $T\equiv_1S$, and so $T\cong S$ as desired.

We lastly show (3).
If $\bar{U}\in Im(\Theta)$, then, by (2), $C(\bar{U})$ is the age of a structure.
Therefore, it must have JEP.
$\bar{U}$ is an anti-chain because it is the set of minimal elements of $X(T)$ for some structure $T$.
Say that $C(\bar{U})$ has JEP and $\bar{U}$ is an anti-chain.
If $F\in C(\bar{U})$ and $G\subseteq F$, then certainly no elements of $\bar{U}$ embed into $G$.
Therefore, $G\in C(\bar{U})$, and so $C(\bar{U})$ has the Hereditary Property.
By Fraïssé's theorem \cite[Chapter 7.1]{Hod93}, there is some countable structure $\M$ with $Age(\M)=C(\bar{U})$.
Let $Age(\A)\neq Age(\M)$.
Then there is some $\bar{p}$ in a finite arrangement in one of $\A$ or $\M$ not witnessed in the other structure.
Playing this tuple gives a winning strategy for the $\forall$-player to demonstrate that $\A\not\equiv_1\M$.
Taken together, we obtain that every structure 1-equivalent to $\M$ has the same age.
By \cref{cor:SRClassificationFromSmall}, we have that there is some $\A$ with $SR(\A)=1$ and $\A\geq_2 \M$.
It follows that $Age(\A)=C(\bar{U})$, and so $\bar{U}=\Theta(\A)$.
Therefore, $\bar{U}\in Im(\Theta)$, as desired.
\end{proof}

A more explicit description of the Scott rank 1 trees can therefore be given by answering the following combinatorial question, which we leave open.
\begin{question}
For which finite anti-chains $\bar{U}\in FinTr^{<\omega}$ does $C(\bar{U})$ have the Joint Embedding Property?
\end{question}

\subsection{Non-splitting types}
In their investigation into Boolean algebras, Harris and Montalbán \cite{HM} described the collection of $n$-back-and-forth types for finite $n\in\omega$.
A direct consequence of their result is that they showed that the theory of Boolean algebras is $\Sinf{n}$ small for every $n$, as mentioned in the introduction of this paper.
A fundamental feature of their study was an interest in the ``descendant'' relation.
Formally speaking, for $m\geq n$, they defined an $m$-type to be a descendant of an $n$-type if the $n$-type is a consequence of the $m$-type.
We give an analogous definition here, including the transfinite case.
\begin{definition}
Given $\alpha,\beta\in\omega_1$ with $\alpha\geq\beta$, a theory $T\in L_{\omega_1,\omega}$, $\mathcal{A}\models T$ and  $\mathcal{B}\models T$, we say that $Th_{\Pinf{\alpha}}(\mathcal{A})$ is a \define{descendant} of $Th_{\Pinf{\beta}}(\mathcal{B})$ if $Th_{\Pinf{\beta}}(\mathcal{A}) = Th_{\Pinf{\beta}}(\mathcal{B})$.
\end{definition}

We also make the following definition, analogous to Harris and Montalbán.
\begin{definition}
A $\Pinf{\alpha}$ theory is \define{non-splitting} if it has exactly one $\Pinf{\alpha+1}$ theory as a descendant.
\end{definition}

Harris and Montalbán conjectured that, for Boolean algebras, non-splitting types at finite levels are exactly the isomorphism types, though they were unable to provide a general proof (instead only demonstrating the claim for $n=1,2,3,4$).
We show a more abstract result that follows immediately from our previous investigation, and confirm their full conjecture as a corollary.

\begin{proposition}\label{prop:nonsplittingTypes}
  Let $T$ in $\Pinf{\alpha+1}$ be a $\Sinf{\alpha}$-small theory and $\rho$ the $\Pinf{\alpha}$ type of $\mathcal{A}\models T$.
If $\rho$ is non-splitting, then for any $\mathcal{B}\models\rho\land T$, $\mathcal{B}\cong\mathcal{A}$.
\end{proposition}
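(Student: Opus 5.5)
The argument combines the labeling machinery with the uniqueness of low-rank models in a back-and-forth class. We show that every model of $\rho\land T$ is isomorphic to the label of the $\equiv_\alpha$-class determined by $\rho$.

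First, since $T$ is $\Pinf{\alpha+1}$ and $\Sinf{\alpha}$-small, \cref{cor:allClassesLabeled} applies. The $\equiv_\alpha$-class of $\mathcal{A}$ is exactly the set of models of $T\land\rho$, because the $\Pinf{\alpha}$ theory of a structure determines its $\Sinf{\alpha}$ theory by negation, and hence determines its $\equiv_\alpha$-class. So this class has a label: a model $\mathcal{K}\models T\land\rho$ with $SR(\mathcal{K})\leq\alpha$.

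Next, fix any $\mathcal{B}\models\rho\land T$. Then $\mathcal{B}\equiv_\alpha\mathcal{K}$, so \cref{prop:uniqueminSR} gives $\mathcal{K}\geq_{\alpha+1}\mathcal{B}$. In other words, every $\Pinf{\alpha+1}$ sentence true in $\mathcal{K}$ is true in $\mathcal{B}$. Since $SR(\mathcal{K})\leq\alpha$, $\mathcal{K}$ has a $\Pinf{\alpha+1}$ Scott sentence, which then holds in $\mathcal{B}$. Hence $\mathcal{B}\cong\mathcal{K}$.

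Finally, $\mathcal{A}$ is itself a model of $\rho\land T$, so $\mathcal{A}\cong\mathcal{K}\cong\mathcal{B}$. This proves the claim.

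This argument never uses the non-splitting hypothesis. That seems suspicious, so the step to check is the identification of the $\equiv_\alpha$-class with $\mathrm{Mod}(T\land\rho)$ together with the direction of \cref{prop:uniqueminSR}. The proposition gives $\mathcal{K}\geq_{\alpha+1}\mathcal{B}$, that is, $\mathcal{B}\leq_{\alpha+1}\mathcal{K}$ in the paper's convention. Under Karp's theorem this direction transfers $\Pinf{\alpha+1}$ facts from $\mathcal{B}$ to $\mathcal{K}$, not from $\mathcal{K}$ to $\mathcal{B}$. If that is the correct reading, the transfer above runs the wrong way and the argument breaks at that step.

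Non-splitting repairs this. Because $\rho$ has exactly one $\Pinf{\alpha+1}$ descendant, $\mathcal{B}$ and $\mathcal{K}$, both models of $\rho$, share the same $\Pinf{\alpha+1}$ theory. So $\mathcal{B}$ satisfies the $\Pinf{\alpha+1}$ Scott sentence of $\mathcal{K}$, and $\mathcal{B}\cong\mathcal{K}$ regardless of which direction \cref{prop:uniqueminSR} supplies.

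The plan is therefore to use non-splitting at exactly this step, which makes the argument robust, and to use \cref{cor:allClassesLabeled} only to produce the Scott rank $\leq\alpha$ model $\mathcal{K}$ inside $\mathrm{Mod}(T\land\rho)$. The expected main obstacle is keeping the direction of the back-and-forth relations straight. Non-splitting is what removes that concern.
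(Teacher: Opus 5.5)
Your final plan is correct and is exactly the paper's proof. \cref{cor:allClassesLabeled} supplies a label $\mathcal{K}$ with $SR(\mathcal{K})\leq\alpha$ and $\mathcal{K}\equiv_\alpha\mathcal{B}\equiv_\alpha\mathcal{A}$. Non-splitting then upgrades this to $\equiv_{\alpha+1}$, and the $\Pinf{\alpha+1}$ Scott sentence of $\mathcal{K}$ forces $\mathcal{A}\cong\mathcal{K}\cong\mathcal{B}$.

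Your suspicion about the first attempt is justified, and it resolves in the negative. $\mathcal{K}\geq_{\alpha+1}\mathcal{B}$ only moves $\Pinf{\alpha+1}$ facts from $\mathcal{B}$ to $\mathcal{K}$: for example, $\eta\equiv_1 1+\eta$ and $SR(\eta)=1$, yet $\eta\not\cong 1+\eta$. So drop that argument and \cref{prop:uniqueminSR} entirely. Non-splitting is genuinely necessary, not a robustness device.
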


\begin{proof}
By \cref{cor:allClassesLabeled}, $\rho$ is labeled, or, in other words, there is a $\mathcal{C}$ such that
\[\mathcal{C}\equiv_\alpha\mathcal{B}\equiv_\alpha\mathcal{A}\]
with $SR(\mathcal{C})\leq\alpha$.
Because $\rho$ is non-splitting, we obtain that 
\[\mathcal{C}\equiv_{\alpha+1}\mathcal{B}\equiv_{\alpha+1}\mathcal{A}.\]
Because $\mathcal{C}$ has a $\Pinf{\alpha+1}$ Scott sentence, we obtain that  
\[\mathcal{C}\cong\mathcal{B}\cong\mathcal{A},\]
as desired.
\end{proof}

As promised, we confirm the conjecture from \cite[Page 21]{HM}.

\begin{corollary}
Every non-splitting $\Pinf{n}$ type of Boolean algebras is an isomorphism type. 
\end{corollary}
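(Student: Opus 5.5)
The plan is to apply \cref{prop:nonsplittingTypes} to the theory of Boolean algebras at each finite level. Write $BA$ for the (first-order, hence $\Pinf{\omega}$-expressible but in fact) $\Pinf{2}$ axiomatization of Boolean algebras. For each $n\geq 1$ I will view $BA$ as a $\Pinf{n+1}$ sentence. This is allowed since $\Pinf{2}\subseteq\Pinf{n+1}$ for $n\geq1$. The case of small $n$, where $BA$ might fail to sit inside $\Pinf{n+1}$, would only arise for $n=0$. At that level $\Pinf{0}$ types are just quantifier-free facts about the empty tuple, and the statement can either be handled directly or excluded, since Harris and Montalbán's types start at $n\geq 1$.

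The first step is to verify the smallness hypothesis. As recalled in the preceding discussion, the Harris--Montalbán description of the $n$-back-and-forth types shows that there are only countably many $\equiv_n$ classes of Boolean algebras. Moreover, for each tuple length, the $\Sinf{n}$ types of tuples are described by their invariants as well; this is what gives $\Sinf{n}$-smallness of $BA$ for every $n\in\omega$. I will cite this directly, as the paper already asserts it in the introduction and in the discussion of the Vaught ordinal of Boolean algebras being $\omega$.

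Next I match definitions. A non-splitting $\Pinf{n}$ type of Boolean algebras in the sense of Harris and Montalbán is $Th_{\Pinf{n}}(\mathcal{A})$ for some countable Boolean algebra $\mathcal{A}$, with exactly one $\Pinf{n+1}$ descendant. This is precisely the paper's notion of a non-splitting $\Pinf{n}$ theory relative to $T=BA$. Invoking \cref{prop:nonsplittingTypes} with $\alpha=n$ and $T=BA$ then gives that every countable $\mathcal{B}\models\rho\land BA$ is isomorphic to $\mathcal{A}$. Hence $\rho$ together with the Boolean algebra axioms pins down a single isomorphism type, i.e., $\rho$ is an isomorphism type.

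The only potential obstacle is bookkeeping. First, Harris and Montalbán's $n$-types (via $\leq_n$ and their invariants) must coincide with $\Pinf{n}$ theories in the paper's sense; Karp's theorem gives this. Second, smallness must be stated for $\Sinf{n}$ types of tuples, not just of sentences; their result on $n$-bf-types of tuples (equivalently, of relativized algebras) covers this. I expect no genuine mathematical difficulty beyond citing these facts correctly.
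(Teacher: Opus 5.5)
Your proposal is correct and is essentially the paper's proof: cite Harris--Montalb\'an for $\Sinf{n}$-smallness of Boolean algebras, note that the axioms lie in $\Pinf{n+1}$, and apply \cref{prop:nonsplittingTypes} with $\alpha=n$. The only difference is cosmetic: the paper observes that the Boolean algebra axioms are universal, hence $\Pinf{1}$, so the case $n=0$ needs no separate treatment.
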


\begin{proof}
  It was demonstrated in \cite{HM} that the theory of Boolean algebras is $\Sinf{n}$-small. Since the theory of Boolean algebras is $\Pinf{1}$, the result follows at once from \cref{prop:nonsplittingTypes}.
\end{proof}

\section{Ehrenfeucht theories}
We now turn our attention to Ehrenfeucht theories, or theories with only finitely many models.
The study of Ehrenfeucht theories is inextricably linked with the study of Vaught's conjecture (see \cite{PT25}).
Indeed, Vaught originally made his conjecture in \cite{Vau61}, where he excludes the possibility of a complete, first-order theory having exactly two models. 
Fortunately, our methods give us insights into the Scott analytic behavior of Ehrenfeucht theories alongside the previously stated insights into Vaught's conjecture.

Our goal is to explain which Scott complexities may arise as models of an Ehrenfeucht theory.
We formalize this using the Scott complexity counting functions introduced in \cite{GLRS}.

\begin{definition}
	Given a theory $T\in L_{\omega_1,\omega}$ and a complexity $\Gamma\in\{\Sinf{\alpha},\dSinf{\alpha},\Pinf{\alpha}\}_{\alpha\in\omega_1}$, the \define{Scott complexity counting function} $I$ is defined by
    \[ I(T,\Gamma) = |\{\mc{M}\models T : SSC(\mc{M})=\Gamma\}| .\]
\end{definition}

Understanding the behavior of the Scott complexity counting function gives the finest possible Scott analysis for a given theory.
We will call the full behavior of the Scott complexity counting function the \textbf{Scott complexity spectrum} of the theory.
We aim to understand the possible Scott complexity spectra for Ehrenfeucht theories, starting with the case where $T$ has exactly two models.
It is readily possible for an infinitary theory to have exactly two models (unlike the complete first-order case).
We state our results for $\Pinf{2}$-theories but want to emphasize that our setting is still fully general. Given any $\Pinf{\alpha}$-theory $T$ we can change the vocabulary using Morleyization to obtain a $\Pinf{2}$ theory whose models are precisely the expansions of models of $T$ (see \cite[Chapter II.5]{Part2}).

\begin{proposition}\label{prop:2ModelsClassified}
A $\Pinf{2}$ formula $T$ with exactly two models has one of the following two Scott complexity spectra:
\begin{align} &I(T,\Pinf{2})=2 \text{ and } I(T,\Gamma)=0 \text{ if $\Gamma\neq \Pinf{2}$};\\
 &I(T,\Pinf{2})=1,  I(T,\dSinf{2})=1, \text{ and } I(T,\Gamma)=0 \text{ if $\Gamma\not\in \{\Pinf{2},\dSinf{2}\}$.}\end{align}
Both possibilities are realized.
\end{proposition}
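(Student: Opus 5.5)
Let the two models be $\A$ and $\B$. Since $T$ has only two models it has only countably many $\Sinf{\beta}$-types realized at every level, so $T$ is $\Sinf{\beta}$-small for all $\beta$. Moreover $T\in\Pinf{2}$, and both models are infinite (any finite model has a $\Sinf{1}\land\Pinf{1}$-type description; we may assume models are infinite, or else handle the finite case separately), so each has Scott complexity at least $\Pinf{2}$. The plan is to pin down the complexities using the tools of Section 2 and then rule out every other combination.

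First, apply \cref{lem:havingPimodels} with $\beta=1$: $T$ is $\Pinf{2}$ and $\Sinf{1}$-small, so some model, say $\A$, has $SR(\A)\le 1$. Hence $\A$ has a $\Pinf{2}$ Scott sentence and $SC(\A)=\Pinf{2}$. For $\B$, consider the sentence $T\land\lnot\phi_\A$, where $\phi_\A$ is a $\Pinf{2}$ Scott sentence for $\A$. This is a $\dSinf{2}$ sentence whose only model is $\B$, so $SC(\B)\le\dSinf{2}$. Consequently $SC(\B)\in\{\Pinf{2},\dSinf{2}\}$, since $\Sinf{2}$ is impossible for infinite structures. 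This yields exactly the two listed spectra. The point to double-check is that a Scott sentence must characterize $\B$ among all countable structures, not merely among models of $T$. This is fine, because we conjoin $T$ itself and $T$ is $\Pinf{2}$.

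For realization, the first spectrum is given by, for example, the disjunction of the Scott sentences of two non-isomorphic structures of Scott complexity $\Pinf{2}$ (e.g.\ $(\mathbb Q,<)$ and an infinite set with no structure, in a common language, or two dense linear orders distinguished by a unary predicate). This disjunction is again $\Pinf{2}$ after combining, since a finite disjunction of $\Pinf{2}$ sentences is $\Pinf{2}$. For the second spectrum we need a $\Pinf{2}$ theory with two models, one of complexity exactly $\dSinf{2}$. A natural candidate is to take $\mathcal{U}$ and $\mathcal{L}$ from \cref{ex:cutModels}, but $\mathcal{L}$ has complexity $\Sinf{3}$, so a different example is needed. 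Instead I would use a structure with a single distinguished non-isolated feature, such as linear orders: $\omega$ has $\Pinf{3}$ complexity, which is too high. A better choice is an equivalence relation with infinitely many infinite classes together with either zero or exactly one extra class of size $1$. ``At most one singleton class'' is $\Pinf{1}$, and the rest of the axioms are $\Pinf{2}$. The model without a singleton has complexity $\Pinf{2}$, while the model with one has Scott sentence $\Pinf{2}\land\exists x(\text{singleton})$, which is $\dSinf{2}$. It is not $\Pinf{2}$, because it is the union of an increasing chain of copies of the other model, or alternatively because it has parameterized but not unparameterized Scott rank $1$ (the singleton is $1$-free).

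The main obstacle is verifying the lower bound in this example, namely that the model with the singleton has no $\Pinf{2}$ Scott sentence. For this I would use the $\alpha$-freeness characterization of \cite{MonSR}: the singleton $a$ is $1$-free, since for any finite $\bar b$ we can find $a'$ in a fresh infinite class with $a\bar b\le_0 a'\bar b$, yet $a\not\le_1 a'$. This gives $SR\ge 2$, and therefore its complexity is not $\Pinf{2}$.
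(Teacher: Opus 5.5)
Your proof is correct, and its classification half is the paper's argument. The paper gets a model $\A$ of complexity $\Pinf{2}$ from \cref{lem:smallnessandgreaterscimpliessc}, whose proof is itself an application of \cref{lem:havingPimodels}, so your direct appeal with $\beta=1$ is the same step. It then notes that $T\land\lnot\phi_\A$ is a $\dSinf{2}$ Scott sentence for the other model. The first spectrum is also realized exactly as you propose.

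You differ in the example for the second spectrum. The paper takes dense linear orders without a last element, whose models are $\eta$ and $1+\eta$, and quotes $1+\eta\leq_2\eta$ from \cite{GR24}. You take equivalence relations with infinitely many infinite classes and at most one singleton class, and you prove the lower bound from scratch by showing the singleton is $1$-free. Your version is self-contained; the paper's is shorter because it outsources the back-and-forth computation. Both examples work for the same reason: a homogeneous model, plus the same model with one extra point that a $\Sinf{2}$ sentence detects but no $\Sinf{1}$ formula isolates.

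There are a few slips, none load-bearing.

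``At most one singleton class'' is $\Pinf{2}$, not $\Pinf{1}$, since it is not preserved under substructures. This is harmless for $T\in\Pinf{2}$.

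Your union-of-chains remark is backwards. The singleton-free model satisfies $\forall x\exists y\,(xEy\land x\neq y)$, which is preserved under unions of chains, so the singleton model is not such a union. Rather, the singleton-free model is a union of a chain of copies of the singleton model, with each old singleton acquiring class-mates at the next stage. This shows every $\Pinf{2}$ sentence true in the singleton model holds in the singleton-free one. That is the exact analogue of the paper's $1+\eta\leq_2\eta$, and it would also suffice.

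In the $1$-freeness check, $\bar b$ may contain $a$, so take $\bar b'$ to be $\bar b$ with $a$ replaced by $a'$, rather than $\bar b$ itself.

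Finally, the finite case cannot be ``handled separately''. In the empty language, $\forall x\forall y\,(x=y)\lor\bigwwedge_n\exists x_1\ldots x_n\bigwedge_{i<j}x_i\neq x_j$ is a $\Pinf{2}$ sentence whose two models have complexities $\Pinf{1}$ and $\Pinf{2}$. So infiniteness of the models is an implicit hypothesis of the statement, as it is in the paper's proof.
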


\begin{proof}
By \cref{lem:smallnessandgreaterscimpliessc} there must be at least one model $\A$ of complexity $\Pinf{2}$.
The other model $\B$ is axiomatized by saying $\B\not\cong\A\land\B\models T$.
This is a $\dSinf{2}$ condition.
This leaves only the two possibilities described in the proposition statement.

To obtain a $\Pinf{2}$ theory $T$ with two models, each of Scott complexity $\Pinf{2}$, simply consider two structures in the same vocabulary with $\Pinf{2}$ Scott complexity and take as $T$ the disjunction of their Scott sentences.

It remains to describe a $\Pinf{2}$ theory with two models, one of which has Scott complexity $\Pinf{2}$ and the other has Scott complexity $\dSinf{2}$.
Let $T$ be in the vocabulary of the binary relation $<$ stating that $<$ is a linear ordering.
Furthermore, $(\forall x<y) \exists z\ x<z<y\in T$ and $\forall x\exists y \ x<y\in T$, or in other words, any model of $T$ must be a dense linear ordering without last element.
It is straightforward to confirm that $\eta$ and $1+\eta$ are the only two models of $T$. 
At last, note that $1+\eta\leq_2 \eta$ (see~\cite[Lemma 2.4]{GR24}), so the structures must have the desired complexities.
\end{proof}

We have a similar characterization of theories with three models.
It will be convenient for us to establish some operations on theories to describe how to realize the given Scott complexity spectra in this more complicated case.

\begin{definition}
Given a theory $T$ over the vocabulary $L$, $T^+$ is a new theory defined over the vocabulary $L\cup\{R,U\}$ where $R$ is a fresh binary relation symbol and $U$ is a fresh unary relation symbol.
$T^+$ is the conjunction of 
\begin{enumerate}
	\item $\forall x\ \lnot U(x)\to (T\land \forall y,z \lnot R(y,z))$
	\item for any relation $S\in L$, $\exists x\  U(x) \to \forall \bar{z} \lnot S(\bar{z})$
	\item $\exists x\ U(x) \to \exists! y\ \lnot U(y)$ and for any constant $c\in L$, $y=c$
	\item for any function $f\in L$, $\exists x\  U(x) \to \forall \bar{z}\  \lnot U(f(\bar{z}))$
	\item the existence of a $U$ point implies the relation $R$ defines a dense linear ordering without endpoints on the $U$ points.
\end{enumerate}
\end{definition}

We now explain how changing $T$ to $T^+$ changes the Scott complexity counting function and the complexity of $T$.

\begin{lemma}
For any $T$ over a vocabulary $L$, $I(T^+,\Pinf{2})=I(T,\Pinf{2})+1$ and $I(T^+,\Gamma)=I(T,\Gamma)$ for $\Gamma\neq \Pinf{2}$.
\end{lemma}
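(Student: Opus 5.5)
Write $\+Q$ for the structure whose domain consists of a single non-$U$ point $*$ (interpreting all constants of $L$) together with infinitely many $U$-points, on which $R$ is a copy of $\eta$. All relations of $L$ are empty in $\+Q$, and functions of $L$ send every tuple to $*$. The plan is first to show that the models of $T^+$ are exactly the models of $T$ (with $U$ and $R$ empty) together with the single structure $\+Q$ up to isomorphism, and then to compute Scott complexities on each side.

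\textbf{Models of $T^+$.} If $U$ is empty, clause (1) forces $\M\models T$ with $R=\emptyset$. Clauses (2)--(5) are then vacuous, so these models correspond bijectively to models of $T$, and this correspondence respects isomorphism. If some point lies in $U$, clause (3) gives exactly one non-$U$ point, and it interprets every constant. Clause (4) says functions never land in $U$, so they are constant with value $*$. Clause (2) kills all relations of $L$, and clause (5) makes $R$ a countable dense linear order without endpoints on $U$. By Cantor's theorem such a model is isomorphic to $\+Q$. (If $T$ is inconsistent, $T^+$ has only $\+Q$ as a model, and the counting is unchanged.)

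\textbf{Complexity of $\+Q$.} A Scott sentence for $\+Q$ is $T^+\land\exists x\,U(x)$. Rather than rely on that, I would write the sentence directly: there is a $U$ point; every point not in $U$ equals each constant (this is $\Pinf{1}$); all $L$-relations are empty; the $L$-functions take value outside $U$; and $R$ is a dense linear order without endpoints on $U$. The only delicate conjunct is existence, but $\exists x\,U(x)$ follows from ``there is a constant'' or can simply be included. The whole sentence is $\Pinf{2}$, and since $\+Q$ is infinite its complexity is exactly $\Pinf{2}$. If $L$ has no constants, I would add a conjunct $\exists y\,\lnot U(y)$; the resulting $\Sinf{1}\land\Pinf{2}$ sentence is still $\Pinf{2}$.

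\textbf{Complexity of the other models.} For $\M\models T$, let $\M^+$ denote its expansion by $U=R=\emptyset$. I must show $SC(\M^+)=SC(\M)$. For the upper bound, take a Scott sentence $\phi$ of $\M$ and conjoin $\forall x\,\lnot U(x)\land\forall y,z\,\lnot R(y,z)$, which is $\Pinf{1}$ and so preserves every complexity class at or above $\Pinf{2}$. Because $\M$ is infinite (as I expect to be assumed), its complexity is at least $\Pinf{2}$, so the upper bound goes through.

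For the lower bound, given a Scott sentence $\psi$ for $\M^+$ among all $L\cup\{R,U\}$-structures, replace $U$ and $R$ by $\bot$ throughout. This is a syntactic substitution that preserves the complexity class, and it yields a Scott sentence for $\M$ among $L$-structures, since $\N\models\psi[\bot/U,R]$ if and only if $\N^+\models\psi$.

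I expect the main obstacle to be boundary cases. If $T$ has finite models (in particular with empty domain, or where $T$ gives complexity below $\Pinf{2}$), the $\Pinf{1}$ conjunct could raise the complexity, so I would check whether finite models must be excluded or whether the count still works there. The other boundary case is the existence clause for $\+Q$ when $L$ has no constants.
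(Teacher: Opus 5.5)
Your proposal is correct and follows the paper's proof. You split the models of $T^+$ by whether $U$ is empty, identify the unique model with $U\neq\emptyset$ via Cantor's theorem as an infinite structure with a $\Pinf{2}$ Scott sentence, and match the remaining models with those of $T$; your substitution of $\bot$ for $U,R$ and your conjunction with $\forall x\,\lnot U(x)\land\forall y,z\,\lnot R(y,z)$ usefully make explicit the equality $SC(\mathcal{M}^+)=SC(\mathcal{M})$, which the paper takes for granted.

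Your boundary worries are harmless. No structure has Scott complexity $\Sinf{1}$ (for finite $\mathcal{M}$, adding a point gives a non-isomorphic extension), so the $\Pinf{1}$ conjunct never raises the complexity. The only slip is that when $L$ has no constants, your direct sentence for $\mathcal{Q}$ also needs the $\Pinf{1}$ conjunct saying at most one point lies outside $U$. Even so, $T^+\land\exists x\,U(x)$ already works, because clause (1) is vacuous once $\exists x\,U(x)$ holds.
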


\begin{proof}
Consider an $M\models T^+$. 
If $M\models \forall x\ \lnot U(x)$, then by the first axiom of $T^+$, $M$ is a model of $T$ with the $U$ and $R$ predicates empty.
If $M\models \exists x\ U(x)$, all relations from $L$ are empty, there is exactly one $\lnot U$ point $y$ that serves as the interpretation of all constants and the range of all functions in $L$, and the $R$-order of the $U$-points is isomorphic to the ordering of the rationals.
In particular, there is exactly one model with $M\models \exists x\ U(x)$ defined by the above axioms.
This model has Scott complexity $\Pinf{2}$ because it is infinite and the orbits are defined in a quantifier-free manner (by the unary relation $U$ along with the $R$-order).
In total, $T^+$ has a model for each model of $T$, plus one extra of Scott complexity $\Pinf{2}$ as required.
\end{proof}

\begin{lemma}
If $T$ is $\Pinf{2}$, then so is $T^+$.
\end{lemma}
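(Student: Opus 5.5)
The plan is to check the five clauses of the definition of $T^+$ one at a time. Each clause should already be $\Pinf{2}$, or become $\Pinf{2}$ after a harmless rewriting. Since $\Pinf{2}$ is closed under countable conjunctions, this gives $T^+\in\Pinf{2}$.

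\textbf{Clause (1).} This is the only place where the complexity of $T$ enters. Read the clause as the intended conditional
\[\big(\forall x\ \lnot U(x)\big)\to \Big(T\land \forall y,z\ \lnot R(y,z)\Big).\]
The antecedent is $\Pinf{1}$, so its negation $\exists x\, U(x)$ is $\Sinf{1}$. The consequent is $\Pinf{2}$, since $T\in\Pinf{2}$ by hypothesis and $\forall y,z\,\lnot R(y,z)$ is $\Pinf{1}$. Write $T$ in normal form as $\bigwwedge_i \forall \bar x_i \phi_i(\bar x_i)$ with $\phi_i\in\Sinf{1}$. Then the clause is equivalent to
\[\bigwwedge_i \forall \bar x_i\big(\exists x\, U(x)\lor \phi_i(\bar x_i)\big)\ \land\ \forall y,z\big(\exists x\,U(x)\lor \lnot R(y,z)\big).\]
Each matrix here is a disjunction of $\Sinf{1}$ formulas, hence $\Sinf{1}$. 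So the whole clause is $\Pinf{2}$.

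\textbf{Clauses (2)--(5).} All of these have the shape $\exists x\,U(x)\to \Psi$, which is equivalent to $\forall x\,\lnot U(x)\lor \Psi$, i.e.\ to $\forall x\,(\lnot U(x)\lor\Psi)$ after pulling the universal quantifier out (with $x$ fresh). It therefore suffices to check that each $\Psi$ is $\Pinf{2}$.
\begin{itemize}
\item In (2), $\Psi=\forall\bar z\,\lnot S(\bar z)$ is $\Pinf{1}$.
\item In (4), $\Psi=\forall \bar z\,\lnot U(f(\bar z))$ is $\Pinf{1}$; if one insists on relational atoms, unnest $f(\bar z)$ with a universally quantified variable.
\item In (3), write $\exists! y\,\lnot U(y)$ together with $y=c$ for every constant $c$ as
\[\Big(\bigwwedge_{c} \lnot U(c)\Big)\land\forall y,y'\big((\lnot U(y)\land\lnot U(y'))\to y=y'\big).\]
Once uniqueness is imposed, existence of a $\lnot U$ point and its agreement with each constant follow from $\lnot U(c)$. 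If $L$ has no constants, add $\exists y\,\lnot U(y)$, which is $\Sinf{1}$ and so certainly $\Pinf{2}$.
\item In (5), the axioms for a dense linear order without endpoints on $U$, relativized to $U$, are $\forall\exists$ sentences, hence $\Pinf{2}$.
\end{itemize}
A universal quantifier in front of a disjunction of a $\Sinf{1}$ formula with a $\Pinf{2}$ formula is still $\Pinf{2}$: distribute the disjunction over the outer conjunction of the $\Pinf{2}$ formula, then pull the universal quantifiers to the front.

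\textbf{Main obstacle.} The only delicate point is clause (1): the possibly infinitary, genuinely $\Pinf{2}$ sentence $T$ must sit under an implication without raising its complexity. This works precisely because the antecedent is $\Pinf{1}$, so its negation is a $\Sinf{1}$ formula that can be absorbed into every $\Sinf{1}$ matrix of $T$. The other clauses are routine bookkeeping.
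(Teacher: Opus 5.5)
Your proof is correct and takes the same approach as the paper, which simply says the lemma is immediate from counting quantifiers in the clauses of $T^+$, together with the fact that dense linear orders without endpoints have a $\Pinf{2}$ axiomatization. Your version is more careful than the paper's one line: in clause (1) you absorb the $\Sinf{1}$ negated antecedent into the matrices of $T$, and in clause (3) you rewrite the $\exists!$ so that it is not naively counted as $\Sinf{2}$.
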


\begin{proof}
This is immediate from counting quantifiers in the definition of $T^+$ and the fact that the dense linear ordering of the rationals has a $\Pinf{2}$ Scott sentence.
\end{proof}

We next provide a binary operation on theories that preserves their more complicated models, but preserves the property of having exactly one $\Pinf{2}$ model.

\begin{definition}
Fix two theories $T$ and $T'$ over vocabularies $L$ and $L'$ with exactly one $\Pinf{2}$ model each $\+M$ and $\+M'$.
Define the theory $T^\wedge T'$ over a two sorted version of the vocabulary $L\sqcup L'$ as follows:
\begin{enumerate}
	\item the first sort is a model of $T$
	\item the second sort is a model of $T'$
	\item either the first sort is isomorphic to $\+M$ or the second sort is isomorphic to $\+M'$.
\end{enumerate}
\end{definition}

\begin{lemma}
For any theories $T$ and $T'$ with unique $\Pinf{2}$ models $\+M$, respectively, $\+M'$, $I(T^\wedge T',\Pinf{2})=1$ and $I(T^\wedge T',\Gamma)=I(T,\Gamma)+I(T',\Gamma)$ for $\Gamma\neq \Pinf{2}$.
\end{lemma}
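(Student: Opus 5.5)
The plan is to enumerate the models of $T^\wedge T'$ directly and then compute the Scott complexity of each one from the complexities of its two sorts. A model of $T^\wedge T'$ is a two-sorted structure $(\A,\A')$ with $\A\models T$, $\A'\models T'$, and either $\A\cong\+M$ or $\A'\cong\+M'$. So up to isomorphism the models are exactly:
\begin{itemize}
\item $(\+M,\+M')$;
\item $(\+M,\A')$ for $\A'\models T'$ with $\A'\not\cong\+M'$;
\item $(\A,\+M')$ for $\A\models T$ with $\A\not\cong\+M$.
\end{itemize}
These are pairwise non-isomorphic, since an isomorphism of two-sorted structures restricts to isomorphisms on each sort. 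The lemma therefore reduces to the following claim: $SC(\+M,\+N')=SC(\+N')$ whenever $SC(\+N')\neq\Pinf{2}$, symmetrically in the other sort, and $SC(\+M,\+M')=\Pinf{2}$.

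\textbf{Upper bound.} Take a Scott sentence $\phi$ of $\+M$ of complexity $\Pinf{2}$ and a Scott sentence $\psi$ of $\+N'$ of complexity $\Gamma\geq\Pinf{2}$. Relativize each to its sort. The conjunction $\phi^{(1)}\land\psi^{(2)}$, together with the $\Pinf{1}$ sort axioms, is a Scott sentence of the pair. Its complexity is at most $\Gamma$, because every complexity class $\Gamma\geq\Pinf{2}$ (that is, $\Pinf{\beta},\Sinf{\beta},\dSinf{\beta}$ for $\beta\geq 2$, and $\Sinf{2}\subseteq\dSinf{2}$) is closed under conjunction with a $\Pinf{2}$ sentence. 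For the pair $(\+M,\+M')$ this gives $\Pinf{2}$, and $\Pinf{2}$ is also the lower bound because the structure is infinite.

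\textbf{Lower bound (the main obstacle).} We must show that $SC(\+M,\+N')\geq SC(\+N')$. My first approach is to extract a Scott sentence of $\+N'$ from one of the pair. Given a Scott sentence $\chi$ of $(\+M,\+N')$, substitute the $\Pinf{2}$ Scott sentence of $\+M$ into the first sort to get a sentence about $\+N'$ alone. The cleanest form of this uses Montalbán's characterization of complexity through definability of orbits and back-and-forth relations. Because the sorts are disjoint and the vocabulary has no cross-sort symbols, automorphism orbits of $(\+M,\+N')$ are products of orbits in the two sorts, and $(\+M,\bar a,\bar b)\leq_\beta(\+M,\bar a',\bar b')$ splits into the corresponding relations on each sort. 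So if $\+N'$ had Scott complexity $\Pinf{\gamma}$, $\Sinf{\gamma}$, or $\dSinf{\gamma}$, with or without parameters, the same witnesses for the pair restrict to $\+N'$.

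Concretely, I will argue by cases via the known characterizations from \cite{AGNHTT}:
\begin{itemize}
\item $\Pinf{\gamma+1}$ means all orbits are $\Sinf{\gamma}$-definable;
\item $\Sinf{\gamma+2}$ means the same after naming finitely many parameters;
\item $\dSinf{\gamma+1}$ means the same after naming parameters, with the parameters' $\Pinf{\gamma+1}$ type having a suitable definition.
\end{itemize}
In each case, a definition in the pair restricts to a definition in $\+N'$ once $\+M$'s coordinates are fixed or absorbed by existential quantification over the first sort. At limit levels I would instead argue through the $\equiv_\lambda$ characterization, using the fact that back-and-forth relations split across disjoint sorts.

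With the claim established, counting gives the lemma. The only $\Pinf{2}$ model of $T^\wedge T'$ is $(\+M,\+M')$: any other model contains a non-$\Pinf{2}$ sort, since $\+M$ and $\+M'$ are the unique $\Pinf{2}$ models of $T$ and $T'$. Each remaining model has the complexity of its non-$\+M$ (respectively non-$\+M'$) sort. Hence $I(T^\wedge T',\Pinf{2})=1$ and $I(T^\wedge T',\Gamma)=I(T,\Gamma)+I(T',\Gamma)$ for $\Gamma\neq\Pinf{2}$.
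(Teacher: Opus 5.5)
Your proposal is correct and follows the paper's route. You list the models as $(\+M,\+M')$, $(\+M,\A')$, $(\A,\+M')$ and use that adjoining the $\Pinf{2}$ sort leaves Scott complexity unchanged, which the paper simply asserts as the identity $SC((\+N,\+K))=\max\{SC(\+N),SC(\+K)\}$. As in the paper, you tacitly assume all models are infinite, so that every complexity involved is at least $\Pinf{2}$.

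Your case-by-case lower bound can be completed from the splitting of back-and-forth relations you cite, but a uniform argument is shorter: in a Scott sentence of $(\+M,\+N')$, replace each sort-1 quantifier by a countable disjunction or conjunction over the elements of $\+M$, and evaluate sort-1 atomic formulas in $\+M$. This produces a Scott sentence for $\+N'$ of the same complexity class, and it handles the $\dSinf{\gamma+1}$ and limit cases at once.
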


\begin{proof}
  Note that for a two-sorted structure with no interaction between the sorts, it is straightforward to see that $SC((\+N,\+K))=\max\{SC(\+N),SC(\+K)\}$.
By the third axiom for $T^\wedge T'$, either the first sort is a copy of $\+M$ or the second sort is a copy of $\+M'$.
We have $SC((\+M,\+K))=SC(\+K)$, so for each model of $T'$ there is a corresponding model with the same complexity in $T^\wedge T'$.
In the case that the first sort is not a copy of $\+M$, the second is a copy of $\+M'$ and $SC(\+N,\+M')=SC(\+N)$.
So, each model of $T'$ has a corresponding model with the same complexity in $T^\wedge T'$.
The only overlap between these two types of structures is $(\+M,\+M')$, so there is still exactly one $\Pinf{2}$ model, but at each other complexity $I(T^\wedge T',\Gamma)=I(T,\Gamma)+I(T',\Gamma)$ as desired.
\end{proof}

\begin{lemma}
If $T$ and $T'$ are $\Pinf{2},$ so is $T^\wedge T'$.
\end{lemma}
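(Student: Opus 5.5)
The plan is to write $T^\wedge T'$ explicitly as a conjunction of finitely many pieces and to check that each piece is $\Pinf{2}$. The key point is that the disjunction in the third axiom is a disjunction of two $\Pinf{2}$ sentences. The class $\Pinf{2}$ is closed under countable conjunctions and finite disjunctions, since $\bigwwedge_i\forall\bar x\phi_i\lor\bigwwedge_j\forall\bar y\psi_j$ is equivalent to $\bigwwedge_{i,j}\forall\bar x\bar y(\phi_i\lor\psi_j)$ with the $\phi_i,\psi_j\in\Sinf{1}$. So it will suffice to express each axiom in $\Pinf{2}$ form.

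\textbf{Step 1 (the two-sorted setup).} The sort predicates, or sort declarations, together with the typing axioms stating that symbols from $L$ only relate elements of the first sort and symbols from $L'$ only relate elements of the second sort, are universal, hence $\Pinf{1}$. Axioms (1) and (2) say that the first sort satisfies $T$ and the second satisfies $T'$. For these I relativize $T$ to the first sort and $T'$ to the second. Relativizing quantifiers to a quantifier-free sort predicate, that is, replacing $\forall x$ by $\forall x(P(x)\to\cdot)$ and $\exists x$ by $\exists x(P(x)\land\cdot)$, does not change the $\Sinf{\alpha}$/$\Pinf{\alpha}$ level. So the relativized theories are still $\Pinf{2}$.

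\textbf{Step 2 (axiom (3)).} By hypothesis, $\+M$ and $\+M'$ are the unique models of $T$ and $T'$ of Scott complexity $\Pinf{2}$. So each has a $\Pinf{2}$ Scott sentence, $\phi_{\+M}$ and $\phi_{\+M'}$. Relativizing $\phi_{\+M}$ to the first sort and $\phi_{\+M'}$ to the second, axiom (3) becomes $\phi_{\+M}^{(1)}\lor\phi_{\+M'}^{(2)}$. By the closure under finite disjunction noted above, this is $\Pinf{2}$.

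\textbf{Step 3 (conclusion).} Taking the conjunction of Steps 1 and 2 gives a $\Pinf{2}$ sentence. The only step needing any care is the disjunction in axiom (3); the normal-form manipulation above handles it. I expect no real obstacle beyond checking that the relativization does not raise complexity.
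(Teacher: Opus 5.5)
Your proposal is correct and is essentially the paper's argument: the paper only says the lemma is immediate from the definition, and you supply the omitted details. Namely, relativizing to a sort preserves the $\Pinf{2}$ level, and axiom (3) is a disjunction of two relativized $\Pinf{2}$ Scott sentences, which is again $\Pinf{2}$ because $\Pinf{2}$ is closed under finite disjunctions. One small point: if you implement the sorts with unary predicates rather than genuine sorts, also add the $\Sinf{1}$ sentences saying each sort is nonempty, so that relativized sentences are not vacuously satisfied; this does not raise the complexity.
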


\begin{proof}
This is immediate from the definition of $T^\wedge T'$.
\end{proof}

We also observe the following general fact for theories with finitely many models.
This will restrict some of the possible behaviors of the Scott complexity counting function.

\begin{proposition}\label{prop:aboveDSinfGivesSinfPinf}
Let $T$ be $\Pinf{2}$ with finitely many models and $n\geq2$.
If there is $\+M$ such that $\+M\models T$ and $SC(\+M)>\dSinf{n}$ then there are models $\+M_1,\+M_2\models T$ with $SC(\+M_1)=\Sinf{n+1}$ and $SC(\+M_2)=\Pinf{n+1}$.
\end{proposition}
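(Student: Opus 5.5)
The plan is to imitate the proof of \cref{lem:smallnessandgreaterscimpliessc} twice, once with \cref{lem:havingPimodels} and once with \cref{lem:havingSigmamodels}. The point is that having only finitely many models lets us exclude any chosen finite set of models by a sentence of the right complexity. First, because $T$ has only finitely many models, it realizes only finitely many $\Sinf{\gamma}$-types for every $\gamma$, so $T$ is $\Sinf{\gamma}$-small for all $\gamma$. Since $n\geq 2$, we have $T\in\Pinf{2}\subseteq\Pinf{n+1}\cap\Sinf{n+1}$. For every model $\+A$ of $T$ we fix a Scott sentence $\phi_{\+A}$ of complexity $SC(\+A)$.

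For $\+M_2$, let $K_1=\{\+A\models T: SC(\+A)\leq\dSinf{n}\}$, which is a finite set. For $\+A\in K_1$ we have $\phi_{\+A}\in\dSinf{n}\subseteq\Sinf{n+1}$, so $\lnot\phi_{\+A}\in\Pinf{n+1}$. Hence
\[\psi_1:=T\land\bigwedge_{\+A\in K_1}\lnot\phi_{\+A}\]
is a $\Pinf{n+1}$ sentence. It is consistent because $\+M\notin K_1$, and it is $\Sinf{n}$-small. Applying \cref{lem:havingPimodels} with $\beta=n$ gives $\+M_2\models\psi_1$ with $SR(\+M_2)\leq n$, i.e.\ $SC(\+M_2)\leq\Pinf{n+1}$. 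Since $\+M_2\notin K_1$, its complexity is above $\dSinf{n}$, so $SC(\+M_2)=\Pinf{n+1}$.

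For $\+M_1$, let $K_2=\{\+A\models T: SC(\+A)\leq\Pinf{n+1}\}$, again finite. Every $\phi_{\+A}$ with $\+A\in K_2$ lies in $\Pinf{n+1}$, since $\dSinf{n}\subseteq\Pinf{n+1}$. So each $\lnot\phi_{\+A}$ is $\Sinf{n+1}$, and
\[\psi_2:=T\land\bigwedge_{\+A\in K_2}\lnot\phi_{\+A}\]
is $\Sinf{n+1}=\Sinf{(n-1)+2}$, using that finite conjunctions of $\Sinf{n+1}$ sentences remain $\Sinf{n+1}$ and that $T\in\Sinf{n+1}$. It is consistent, witnessed by $\+M$, since $SC(\+M)>\dSinf{n}$ and $SC(\+M)$ is not $\Pinf{n+1}$ only if it exceeds it; and if $SC(\+M)=\Pinf{n+1}$ we may instead use any model above $\Pinf{n+1}$.

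This is the step I expect to need care. If $\+M$ itself has complexity exactly $\Pinf{n+1}$, then consistency of $\psi_2$ is not automatic. In that case I will define $K_2$ to exclude only models of complexity at most $\dSinf{n}$ together with those of complexity exactly $\Pinf{n+1}$ other than a fixed one. Alternatively, I will argue that the statement's hypothesis should be read as the model lying above every $\Pinf{n+1}$ complexity, and then use $\+M$ as the witness. Given consistency, $\psi_2$ is $\Sinf{n-1}$-small, so \cref{lem:havingSigmamodels} with $\beta=n-1$ yields $\+M_1\models\psi_2$ with $SR_p(\+M_1)\leq n-1$, i.e.\ $SC(\+M_1)\leq\Sinf{n+1}$. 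Since $\+M_1\notin K_2$, we get $SC(\+M_1)=\Sinf{n+1}$. The main obstacles are this consistency point and checking the complexity bookkeeping, namely that negations of Scott sentences land in the classes required by the two lemmas.
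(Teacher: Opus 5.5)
Your construction of $\mathcal{M}_2$ is exactly the paper's argument. The gap is in the $\mathcal{M}_1$ half, at the point you flagged. You put the $\Pinf{n+1}$-complexity models into $K_2$, so consistency of $\psi_2$ requires a model whose complexity is not $\leq\Pinf{n+1}$. When $SC(\mathcal{M})=\Pinf{n+1}$, the only source of such a model is essentially the conclusion you are trying to prove.

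Your second repair is not legitimate. The hypothesis $SC(\mathcal{M})>\dSinf{n}$ genuinely includes $SC(\mathcal{M})=\Pinf{n+1}$, and that is precisely the case the paper uses later: constraint (2) of \cref{thm:smallSupportChar}, and \cref{prop:kcutAnalysis}, where the $\Pinf{3}$ model $\mathcal{N}$ is what yields a $\Sinf{3}$ model.

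Your first repair (exclude every $\Pinf{n+1}$ model except one fixed $\mathcal{M}^*$) can be made to work, but only with an argument you did not give. You must rule out that \cref{lem:havingSigmamodels} returns $\mathcal{M}^*$ itself. The reason is that its output has a $\Sinf{n+1}$ Scott sentence, so its complexity is $\leq\Sinf{n+1}$. That is incompatible with complexity exactly $\Pinf{n+1}$, since these two classes are incomparable.

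That same observation shows the $\Pinf{n+1}$ exclusions are unnecessary altogether, and this is the idea the paper uses. Your sentence $\psi_1$ is already $\Sinf{n+1}$, not just $\Pinf{n+1}$. Write $\phi_{\mathcal{A}}\equiv\sigma\land\pi$ with $\sigma\in\Sinf{n}$ and $\pi\in\Pinf{n}$. Then its negation $\lnot\sigma\lor\lnot\pi$ lies in $\Sinf{n+1}\cap\Pinf{n+1}$, and $T\in\Pinf{2}\subseteq\Sinf{n+1}$.

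So the paper applies \cref{lem:havingPimodels} with $\beta=n$ and \cref{lem:havingSigmamodels} with $\beta=n-1$ to the single sentence $\psi_1$. Its consistency is witnessed by $\mathcal{M}$ with no case distinction. The two models obtained have complexity $\leq\Pinf{n+1}$ and $\leq\Sinf{n+1}$ respectively, and neither is $\leq\dSinf{n}$. Hence their complexities are exactly $\Pinf{n+1}$ and $\Sinf{n+1}$.
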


\begin{proof}
As in \cref{lem:smallnessandgreaterscimpliessc}, let $S_n=\{ \+S: \+S\models T \land SC(\+S)\leq \dSinf{n}\}$.
Note that $S_n/{\cong}$ is finite, so $S_n$ is a $\dSinf{n}$-definable.
Furthermore, as $Mod(T)\setminus S_n$ is  non-empty and both $\Pinf{n+1}$ and $\Sinf{n+1}$-definable, 
by \cref{lem:havingPimodels} and \cref{lem:havingSigmamodels}, there are $\+M_1,\+M_2\in Mod(T)\setminus S_n$ with $SC(\+M_1)\leq\Sinf{n+1}$ and $SC(\+M_2)\leq\Pinf{n+1}$.
As $\+M_1$ has no $\dSinf{n}$ Scott sentence, it must be that $SC(\+M_1)=\Sinf{n+1}$.
Symmetrically, $SC(\+M_2)=\Pinf{n+1}$, giving the desired result.
\end{proof}

We are now ready to classify the behavior of the Scott complexity counting function when the theory has exactly three models.

\begin{proposition}
A $\Pinf{2}$ formula $T$ with exactly three models has one of the following four Scott complexity spectra:
for some $n$ with $1\leq n\leq3$
\begin{align} \tag*{(1)-(3)}&I(T,\Pinf{2})=n,  I(T,\dSinf{2})=3-n, \text{and }  I(T,\Gamma)=0 \text{ if $\Gamma\not\in \{\Pinf{2}, \dSinf{2}\}$}\\
\tag{4}&I(T,\Pinf{2})=1,  I(T,\Pinf{3})=1,  I(T,\Sinf{3})=1,\text{and }  I(T,\Gamma)=0 \text{ if $\Gamma\not\in \{ \Pinf{2},\Pinf{3},\Sinf{3}\}$.}\end{align}
All possibilities are realized.
\end{proposition}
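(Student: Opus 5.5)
The plan has two halves. First I cut the possible spectra down to the four listed, using the general facts already established. Then I realize each of the four with the operations $T^+$ and $T^\wedge T'$ together with \cref{ex:cutModels}.

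For the restriction, \cref{lem:smallnessandgreaterscimpliessc} applied with $\beta=1$ gives at least one model of complexity $\Pinf{2}$; a finite theory with an infinite model has $vo(T)>1$, and in any case the models are infinite. I split into cases according to whether every model has complexity at most $\dSinf{2}$.

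\emph{Case 1: every model has complexity at most $\dSinf{2}$.} Since infinite structures have complexity at least $\Pinf{2}$, each model is $\Pinf{2}$ or $\dSinf{2}$. At least one is $\Pinf{2}$, which leaves exactly spectra (1)--(3).

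\emph{Case 2: some model has complexity above $\dSinf{2}$.} Then \cref{prop:aboveDSinfGivesSinfPinf} with $n=2$ gives models of complexity exactly $\Sinf{3}$ and $\Pinf{3}$. Together with the $\Pinf{2}$ model, these are three distinct models, hence all of them. This is spectrum (4).

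The one point to check is that \cref{prop:aboveDSinfGivesSinfPinf} applies: $T$ is $\Pinf{2}$ with finitely many models, so it is $\Sinf{\beta}$-small at every level. Finitely many models realize only countably many types.

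For realizability I construct one theory for each spectrum.
\begin{itemize}
\item Spectrum (4) is realized by $E_3$ from \cref{ex:cutModels}, whose models $\+U,\+N,\+L$ have complexities $\Pinf{2},\Pinf{3},\Sinf{3}$. $E_3$ is $\Pinf{2}$ in its vocabulary with constants.
\item Spectrum (3) (three $\Pinf{2}$ models) is the disjunction of Scott sentences of three distinct $\Pinf{2}$ structures in a common vocabulary. Alternatively, apply $^+$ to the two-model theory with both models $\Pinf{2}$.
\item Spectrum (2) (two $\Pinf{2}$, one $\dSinf{2}$) is $T_0^+$, where $T_0$ is the theory of dense linear orders without last element from \cref{prop:2ModelsClassified}. $T_0$ has models $\eta$ ($\Pinf{2}$) and $1+\eta$ ($\dSinf{2}$), and the lemma on $T^+$ adds exactly one $\Pinf{2}$ model.
\item Spectrum (1) (one $\Pinf{2}$, two $\dSinf{2}$) is $T_0{}^\wedge T_0'$, where $T_0'$ is a copy of $T_0$ in a fresh vocabulary. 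Each has a unique $\Pinf{2}$ model, so the lemma on $T^\wedge T'$ gives $I(\Pinf{2})=1$ and $I(\dSinf{2})=1+1=2$.
\end{itemize}
All these theories stay $\Pinf{2}$ by the accompanying lemmas.

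The main obstacle is confirming that the constructions really have exactly three models with the stated complexities. For $T_0{}^\wedge T_0'$ this means counting pairs $(\+N,\+K)$ with $\+N\cong\eta$ or $\+K\cong\eta$: these are $(\eta,\eta)$, $(\eta,1+\eta)$, and $(1+\eta,\eta)$, exactly three. It also relies on the lemma's max rule $SC((\+N,\+K))=\max\{SC(\+N),SC(\+K)\}$, which gives complexity $\dSinf{2}$ for the two mixed pairs. The remaining checks are bookkeeping from the already proved lemmas.
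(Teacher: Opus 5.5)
Your proposal is correct and follows the paper's proof essentially step for step. You use the same case split, obtained from the guaranteed $\Pinf{2}$ model together with the proposition that a model above $\dSinf{2}$ forces models of complexity exactly $\Sinf{3}$ and $\Pinf{3}$, and the same realizing theories ($D^+$, $D^\wedge D$, and the three-model theory of the cut example); the only cosmetic difference is that for three $\Pinf{2}$ models the paper uses $S^{++}$ for a $\Pinf{2}$ Scott sentence $S$, whereas your disjunction of three $\Pinf{2}$ Scott sentences works equally well.
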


\begin{proof}
We first show that these are the only possibilities.
By \cref{lem:smallnessandgreaterscimpliessc} there must be at least one model of $T$ of complexity $\Pinf{2}$.
By \cref{prop:aboveDSinfGivesSinfPinf} if there is any model with complexity above $\dSinf{2},$ there must be at least one model of complexity $\Pinf{3}$ and $\Sinf{3}.$
As there are only three models in total, this forces the last spectrum to be the one in question.
Otherwise, all models are at most $\dSinf{2}$, and each of those possibilities is covered by the first three listed spectra.

We now demonstrate how to realize each of the Scott spectra.
Let $S$ be a $\Pinf{2}$ Scott sentence.
$S^{++}$ has $3$ models of Scott complexity $\Pinf{2}$ and no more.
Let $D$ be the theory from \cref{prop:2ModelsClassified} with exactly one structure of Scott complexity $\Pinf{2}$ and one structure of Scott complexity $\dSinf{2}$.
$D^+$ has $2$ models of Scott complexity $\Pinf{2}$ and one of complexity $\dSinf{2}$.
$D^\wedge D$ has one model of Scott complexity $\Pinf{2}$ and two of complexity $\dSinf{2}$.
Lastly, the theory $C$ from \cref{ex:cutModels} has three models; one of complexity $\Pinf{2}$, one of complexity $\Pinf{3}$, and one of complexity $\Sinf{3}$.
\end{proof}

\cref{prop:aboveDSinfGivesSinfPinf} also helps us obtain a general fact about theories with finitely many models, which partially generalizes what we have observed about theories with $2$ or $3$ models.

\begin{proposition}\label{prop:finiteHeight}
Say that $T$ is a $\Pinf{2}$ theory with $2N+1$ ($2N+2$) models.
If $\+M\models T$, then $SC(\+M)<\dSinf{N+2}$ ($SC(M)\leq\dSinf{N+2}$).
\end{proposition}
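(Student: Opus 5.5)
We argue by contraposition, iterating \cref{prop:aboveDSinfGivesSinfPinf} to count models. Suppose some $\+M\models T$ has a Scott complexity exceeding the claimed bound. Then for every $n$ with $2\leq n\leq N+1$ (and additionally $n=N+2$ in the odd case $2N+1$, when $SC(\+M)\geq\dSinf{N+2}$ — wait, more carefully below) we have $SC(\+M)>\dSinf{n}$. So \cref{prop:aboveDSinfGivesSinfPinf} gives models $\+M^n_1,\+M^n_2\models T$ with $SC(\+M^n_1)=\Sinf{n+1}$ and $SC(\+M^n_2)=\Pinf{n+1}$. Distinct Scott complexities force non-isomorphism, so all models obtained for different $n$ and different $i$ are pairwise distinct. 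In addition, \cref{lem:smallnessandgreaterscimpliessc} (using $vo(T)>1$, or simply as in \cref{prop:2ModelsClassified}) supplies a model of complexity exactly $\Pinf{2}$. This model is different from all $\+M^n_i$, whose complexities are at least $\Sinf{3}$.

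For the even case ($2N+2$ models), assume $SC(\+M)>\dSinf{N+2}$. Then the hypothesis of \cref{prop:aboveDSinfGivesSinfPinf} holds for every $n\in\{2,\dots,N+2\}$, which is $N+1$ values. This yields $2(N+1)$ models of complexities $\Sinf{n+1},\Pinf{n+1}$, together with the $\Pinf{2}$ model: $2N+3$ models in total, a contradiction.

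For the odd case ($2N+1$ models), assume $SC(\+M)\geq\dSinf{N+2}$. Then $SC(\+M)>\dSinf{N+1}$, so $n$ ranges over $\{2,\dots,N+1\}$, giving $2N$ models. Adding the $\Pinf{2}$ model gives $2N+1$, which is not yet a contradiction. The extra model must come from $\+M$ itself: its complexity is at least $\dSinf{N+2}$, hence differs from each $\Sinf{n+1},\Pinf{n+1}$ with $n\leq N+1$ and from $\Pinf{2}$. This gives $2N+2$ distinct models, a contradiction.

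The main obstacle is this bookkeeping. First, I must check that $\+M$ is genuinely new, i.e.\ that $\dSinf{N+2}$ and everything above it lie strictly above $\Sinf{N+2}$ and $\Pinf{N+2}$ in the complexity ordering; this holds. Second, the degenerate edge case $N=0$ needs care: one model forces complexity below $\dSinf{2}$, i.e.\ $\Pinf{2}$ by \cref{lem:smallnessandgreaterscimpliessc}, and two models forces at most $\dSinf{2}$, matching \cref{prop:2ModelsClassified}. Third, \cref{prop:aboveDSinfGivesSinfPinf} requires $n\geq 2$, which holds throughout the ranges used.
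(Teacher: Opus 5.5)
Your argument is correct and is essentially the paper's proof. In the even case you apply \cref{prop:aboveDSinfGivesSinfPinf} for $n=2,\dots,N+2$ and add the $\Pinf{2}$ model to reach $2N+3$ models; in the odd case you apply it for $n=2,\dots,N+1$ and count $\+M$ itself as the $(2N+2)$nd model. The only blemish is the aside in your first paragraph, which attaches the extra index $n=N+2$ to the odd case rather than the even one, but your detailed case analysis gets this right.
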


\begin{proof}
We begin with the case where $T$ has $2N+1$ models.
For the sake of contradiction, say that there is a $\+M\models T$ with $SC(\+M)\geq\dSinf{N+2}>\dSinf{N+1}$.
By \cref{prop:aboveDSinfGivesSinfPinf}, this means that for $2\leq k\leq N+1,$ there are models $\+M^k_1$ and $\+M^k_2$ with $SC(\+M^k_1)=\Sinf{k+1}$ and $SC(\+M^k_2)=\Pinf{k+1}$.
Furthermore, by \cref{lem:smallnessandgreaterscimpliessc} there must be at least one model $\+A\models T$ of complexity $\Pinf{2}$.
In total, the $\+M^k_i$ and $A$ are $2N+1$ models, all of complexity strictly less than $\dSinf{N+2}$.
This means that $\+M$ is not among these models, so $T$ has at least $2N+2$ models, a contradiction.

We now look at the case where $T$ has $2N+2$ models.
This will proceed similarly to the above case.
For the sake of contradiction, say that there is $\+M\models T$ with $SC(\+M)>\dSinf{N+2}$.
By \cref{prop:aboveDSinfGivesSinfPinf}, this means that for $2\leq k\leq N+2,$ there are models $\+M^k_1$ and $\+M^k_2$ with $SC(\+M^k_1)=\Sinf{k+1}$ and $SC(\+M^k_2)=\Pinf{k+1}$.
Furthermore, by \cref{lem:smallnessandgreaterscimpliessc} there must be at least one model $\+A\models T$ of complexity $\Pinf{2}$.
In total, the $\+M^k_i$ and $\+A$ are $2N+3$ models, all of different complexities, a contradiction to the assumption that there are only $2N+2$ models.
\end{proof}

Repeating the same argument at level $\alpha$ immediately yields the following corollary.

\begin{corollary}
Say that $T$ is a $\Pinf{\alpha}$ theory with $2N+1$ ($2N+2$) models.
If $\+M\models T$, then $SC(\+M)<\dSinf{\alpha+N}$ ($SC(\+M)\leq\dSinf{\alpha+N}$).
\end{corollary}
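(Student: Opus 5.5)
The plan is to repeat the argument of \cref{prop:finiteHeight}, shifted from level $2$ to level $\alpha$. To stay at the right level, I would not Morleyize down to $\Pinf{2}$; instead I would rerun the chain of lemmas directly with $T$ in $\Pinf{\alpha}$.

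The first step is to record the level-$\alpha$ analogue of \cref{prop:aboveDSinfGivesSinfPinf}. Suppose $T\in\Pinf{\alpha}$ has finitely many models, $m\geq\alpha$, and some $\+M\models T$ has $SC(\+M)>\dSinf{m}$. Then there are models $\+M_1,\+M_2\models T$ with $SC(\+M_1)=\Sinf{m+1}$ and $SC(\+M_2)=\Pinf{m+1}$. The proof is verbatim:
\begin{itemize}
\item Let $S_m$ be the finitely many models of complexity at most $\dSinf{m}$. Then $S_m$ is $\dSinf{m}$-definable, and $\lnot S_m$ is both $\Pinf{m+1}$ and $\Sinf{m+1}$.
\item Since $m\geq\alpha$, the conjunction $T\land\lnot S_m$ is again $\Pinf{m+1}$ and $\Sinf{m+1}$. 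It is $\Sinf{m}$-small, because $T$ has finitely many models.
\item It is consistent, witnessed by $\+M$.
\item \cref{lem:havingPimodels} and \cref{lem:havingSigmamodels} then give models of complexity at most $\Pinf{m+1}$ and at most $\Sinf{m+1}$ outside $S_m$. Hence their complexities are exactly $\Pinf{m+1}$ and $\Sinf{m+1}$.
\end{itemize}

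The second step supplies the base model. Since $T$ is $\Pinf{\alpha}$, and hence $\Pinf{\alpha+1}$ when $\alpha$ is a successor, \cref{lem:smallnessandgreaterscimpliessc} gives a model of complexity $\Pinf{\alpha}$ when $\alpha$ is a successor. More robustly, \cref{lem:havingPimodels} gives some model $\+A$ of complexity at most $\Pinf{\alpha}$. This base model must not collide with any of the models produced in the third step. It does not: those have complexities $\Sinf{k+1}$ and $\Pinf{k+1}$ with $k\geq\alpha$, which lie strictly above $\Pinf{\alpha}$.

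The third step is the counting. Suppose $T$ has $2N+1$ models and some $\+M\models T$ has $SC(\+M)\geq\dSinf{\alpha+N}$.
\begin{itemize}
\item For every $k$ with $\alpha\leq k\leq\alpha+N-1$ we have $SC(\+M)>\dSinf{k}$. The first step therefore yields $2N$ models of the pairwise distinct complexities $\Sinf{k+1}$ and $\Pinf{k+1}$.
\item All of these lie strictly below $\dSinf{\alpha+N}$.
\item Together with $\+A$ this gives $2N+1$ distinct models. None of them is $\+M$, so $T$ has at least $2N+2$ models, a contradiction.
\end{itemize}
The $2N+2$ case is the same. Assuming $SC(\+M)>\dSinf{\alpha+N}$, we also get $k=\alpha+N$, hence $2N+2$ models plus $\+A$, which is again too many.

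The main obstacle is index bookkeeping at the bottom level. The hypotheses of \cref{lem:havingPimodels} and \cref{lem:havingSigmamodels} require $T\land\lnot S_k$ to be $\Pinf{k+1}$ and $\Sinf{k+1}$ respectively, which needs $T\in\Pinf{k+1}$, i.e.\ $k+1\geq\alpha$. Note that at $k=\alpha-1$ the $\Sinf{\alpha}$ part fails in general. This is exactly why I start the range of $k$ at $\alpha$, matching the original, where $k$ starts at $2$ for $\Pinf{2}$ theories. If the resulting count is off by one relative to the stated bound, I would instead Morleyize. That produces a $\Pinf{2}$ theory whose models are the expansions of models of $T$, to which \cref{prop:finiteHeight} applies. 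I would then check that Morleyization shifts Scott complexities by exactly $\alpha-2$ at levels at least $\alpha$, so that $\dSinf{N+2}$ translates to $\dSinf{\alpha+N}$.
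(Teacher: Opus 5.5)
Your proposal follows the paper's argument: rerun \cref{prop:finiteHeight} at level $\alpha$, using the level-$k$ version of \cref{prop:aboveDSinfGivesSinfPinf} for $\alpha\le k\le\alpha+N-1$ (resp.\ $k\le\alpha+N$) together with one base model of complexity at most $\Pinf{\alpha}$. Your bookkeeping and your count are correct. The one unstated assumption, which the paper's one-line justification shares, is that $\alpha$ is a successor. Your bookkeeping paragraph catches the $k=\alpha-1$ problem but not this one. The base model comes from \cref{lem:havingPimodels}, which needs $\alpha=\beta+1$. The $\Sinf{\alpha+1}$ model at $k=\alpha$ comes from \cref{lem:havingSigmamodels}, which needs $\alpha+1=\beta+2$. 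For successor $\alpha$, including $\alpha=\lambda+1$, your proof is complete. The Morleyization fallback is not needed there, since the count is not off by one.

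At limit $\alpha$ the gap cannot be patched, because the stated bound is false there. A counterexample for $\alpha=\omega$, $N=1$:
\begin{enumerate}
\item[(a)] \emph{The theory.} $T$ says that $(D,<)$ is a dense order without endpoints. Each $x\in D$ carries a gadget made of blocks $B_{x,m}$ ($m\in\omega$), each isomorphic to $\omega^m$ or to $\omega^m\cdot 2$. For each $n$ exactly one point $c_n$ has $B_{c_n,n}\cong\omega^n\cdot 2$, no point has two nonstandard blocks, and $c_n<c_{n+1}$.
\item[(b)] \emph{Its models.} Every axiom has finite level, so $T\in\Pinf{\omega}$. $T$ has exactly three models, the analogues $\mathcal{U},\mathcal{N},\mathcal{L}$ of the models in \cref{ex:cutModels}.
\item[(c)] \emph{Invisible tail.} Since $\omega^k\equiv_n\omega^k\cdot 2$ once $k$ is large relative to $n$, at each finite level the tail points $c_k$ look like ordinary points.
\item[(d)] \emph{$\mathcal{N}\equiv_{\omega+1}\mathcal{L}$.} View $\mathcal{N}$ as $\mathcal{L}$ with the supremum $\ell$ and its gadget deleted. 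Answer $\ell$ by a point of $\mathcal{N}$ above all $c_k$ and below the other chosen points, and answer every other point by itself. For each $n$, take a skeleton isomorphism that fixes $c_0,\dots,c_M$, respects the chosen points, and moves only tail $c_k$'s; combined with (c), this gives $\equiv_n$ for every $n$, so the first-move answers witness $\equiv_{\omega+1}$.
\item[(e)] \emph{Conclusion.} Neither $\mathcal{N}$ nor $\mathcal{L}$ has a $\Sinf{\omega+1}$ or $\Pinf{\omega+1}$ Scott sentence. This contradicts $SC<\dSinf{\omega+1}$.
\end{enumerate}

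What survives at limit $\alpha$ is your argument run at $\alpha+1$, since $T\in\Pinf{\alpha+1}$. That weaker bound still suffices for \cref{cor:ehrenfeuchtomegavaught}. This is also why your fallback's uniform ``shift by $\alpha-2$'' is unavailable. At a limit $\lambda$, Morleyizing a $\Pinf{\lambda}$ theory into a $\Pinf{2}$ one sends level $2$ to level $\lambda+1$, not $\lambda$, which is exactly the off-by-one.
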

\ehrenfeuchtomegavaught

The $\omega$-Vaught's conjecture is an infinitary analog of Martin's conjecture, which is still open for Ehrenfeucht theories \cite[Problem 7]{PT25}. As every first-order theory is trivially a $\Pinf{\omega}$ theory, \cref{cor:ehrenfeuchtomegavaught} gives that $vo(T)<\omega\cdot2$ for first-order, Ehrenfeucht $T$. It improves a result of Wagner~\cite{Wag82} who showed that first-order Ehrenfeucht theories $T$ have $vo(T)<\omega^2$.

\begin{corollary}
Fix $N\in\omega$. There are only finitely many possible Scott complexity spectra for $\Pinf{2}$ theories with $N$ models.
\end{corollary}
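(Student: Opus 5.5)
The plan is to reduce the counting of spectra to a bound on the set of complexities that can occur, and then count functions into a finite set. Fix $N$ and let $T$ be a $\Pinf{2}$ theory with exactly $N$ models. By \cref{prop:finiteHeight}, applied with whichever of $N=2M+1$ or $N=2M+2$ holds, every model $\+M\models T$ has $SC(\+M)\leq\dSinf{M+2}$. Here $M$ depends only on $N$.

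Next we use that Scott complexities are linearly ordered. Every infinite structure has complexity at least $\Pinf{2}$, and below the bound there are only finitely many candidates:
\[\Pinf{2},\dSinf{2},\Sinf{3},\Pinf{3},\dSinf{3},\dots,\Sinf{M+2},\Pinf{M+2},\dSinf{M+2}.\]
Call this finite list $\mathcal{C}_N$. If the theory allows finite models, we must also account for the finitely many finite complexities, such as $\Sinf{1}$, $\Pinf{1}$, $\dSinf{1}$ and possibly $\Sinf{2}$. Since there are only finitely many such levels, adding them to $\mathcal{C}_N$ keeps it finite.

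The Scott complexity spectrum of $T$ is determined by the function $\Gamma\mapsto I(T,\Gamma)$. This function vanishes outside $\mathcal{C}_N$, and its values on $\mathcal{C}_N$ are natural numbers summing to $N$. The number of such functions is at most the number of ways to distribute $N$ models among $|\mathcal{C}_N|$ complexity classes, namely $\binom{N+|\mathcal{C}_N|-1}{N}$, which is finite.

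The only step needing care is the case of small $N$, and of finite models, where \cref{prop:finiteHeight} was stated with $\Pinf{2}$ as the minimal complexity. Even there the conclusion of \cref{prop:finiteHeight} is only used as an upper bound, and the lower end is bounded by $\Sinf{0}$, so the finiteness argument is unaffected. I expect no real obstacle here. The argument is a direct combinatorial consequence of the uniform height bound, which is the substantive content already proved in \cref{prop:finiteHeight}.
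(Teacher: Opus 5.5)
Your proposal is correct and follows the paper's own argument: \cref{prop:finiteHeight} confines the realized complexities to a finite set, and on that set the counting function takes values in $\{0,\dots,N\}$, so only finitely many spectra are possible. One small inaccuracy: Scott complexities are not linearly ordered, since $\Sinf{n}$ and $\Pinf{n}$ are incomparable, but your argument only uses that finitely many complexity classes lie below $\dSinf{M+2}$, so nothing breaks.
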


\begin{proof}
By \cref{prop:finiteHeight}, there are only finitely many Scott complexities realized by models of such theories $T$.
Furthermore, for each such complexity $\Gamma$, certainly $0\leq I(T,\Gamma)\leq N$, so there are only finitely many possible arrangements.
\end{proof}

This motivates the definition of a spectrum counting function as follows.

\begin{definition}
  The \define{spectrum counting function} $F:\omega\to\omega$ is so that $F(N)$ is the number of possible Scott complexity spectra for $\Pinf{2}$ theories with $N$ models.
\end{definition}

We have calculated above that $F(2)=2$ and $F(3)=4$.
In general, we can comfortably say that $F(2N+1)<(2N+1)^{3N+1}$ and $F(2N+2)<(2N+2)^{3N+2}$ by the very logic in the above corollary.
We expect the actual values of the function to be far smaller, however.
For example, we note that \cref{prop:aboveDSinfGivesSinfPinf} does not give the only restriction on the possible spectra.
Indeed, we will see that the roles of the $\Pinf{k+1}$ and $\Sinf{k+1}$ models are not entirely symmetric, even in the finitely many models scenario.

\begin{proposition}\label{prop:finiteSpectralInequality}
If $T$ is a $\Pinf{2}$ Ehrenfeucht theory, then for every $k\geq1$
\[I(T,\Pinf{k+1}) \leq I(T,\Sinf{k+1}).\]
\end{proposition}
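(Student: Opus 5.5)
The idea is to build an injection from the models of Scott complexity exactly $\Pinf{k+1}$ into those of Scott complexity exactly $\Sinf{k+1}$, using the $\equiv_k$-classes as the bridge. Fix $k\geq 2$. The case $k=1$ needs separate treatment: there $T$ is only $\Pinf{2}$, and no infinite structure has a $\Sinf{2}$ Scott sentence. So for $k=1$ one has to check how the statement is meant to be read, e.g.\ whether finite models are being counted. Let $\M_1,\dots,\M_m$ be the models of $T$ with $SC(\M_j)=\Pinf{k+1}$. Each has $SR(\M_j)=k$, so by \cref{cor:onlyOneSRalphaPerClass} they lie in pairwise distinct $\equiv_k$-classes $C_1,\dots,C_m$. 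Moreover, $\M_j$ is the unique model of Scott rank at most $k$ in $C_j$.

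\textbf{Step 1: the classes are simple.} Since $T$ has only finitely many models, there are only finitely many $\equiv_k$-classes among its models. Each class $C_j$ is therefore cut out, among models of $T$, by a finite Boolean combination $B_j$ of $\Sinf{k}$ and $\Pinf{k}$ sentences, one sentence separating $C_j$ from each other class. Such a $B_j$ lies in both $\Sinf{k+1}$ and $\Pinf{k+1}$. Since $k\geq 2$, the sentence $T$ is $\Pinf{2}$, which is contained in both $\Sinf{k+1}$ and $\Pinf{k+1}$. Hence $T\land B_j$ is simultaneously $\Sinf{k+1}$ and $\Pinf{k+1}$. This is the sharper replacement for \cref{lem:smallEasybnfDescription} that the finite-model setting allows.

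\textbf{Step 2: each class contains a second model.} Suppose $C_j=\{\M_j\}$. Then $T\land B_j$ is a Scott sentence for $\M_j$ that is both $\Sinf{k+1}$ and $\Pinf{k+1}$. By the result of \cite{AGNHTT} that a structure with both $\Sinf{\alpha+1}$ and $\Pinf{\alpha+1}$ Scott sentences has a $\dSinf{\alpha}$ one, we would get $SC(\M_j)\leq\dSinf{k}$, a contradiction. So $C_j$ contains some model other than $\M_j$.

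\textbf{Step 3: find a $\Sinf{k+1}$ model in each class.} Let $\varphi_j$ be a $\Pinf{k+1}$ Scott sentence for $\M_j$. The sentence $T\land B_j\land\lnot\varphi_j$ is $\Sinf{k+1}$, hence $\Sinf{(k-1)+2}$, and by Step 2 it is consistent. It is also $\Sinf{k-1}$-small, since $T$ has finitely many models. By \cref{lem:havingSigmamodels} it has a model $\N_j$ with $SR_p(\N_j)\leq k-1$, so $SC(\N_j)\leq\Sinf{k+1}$. On the other hand, $\N_j\in C_j$ and $\N_j\not\cong\M_j$, so by the uniqueness of the label $SR(\N_j)>k$. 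This excludes both $SC(\N_j)\leq\dSinf{k}$ and $SC(\N_j)=\Pinf{k+1}$, hence $SC(\N_j)=\Sinf{k+1}$ exactly.

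\textbf{Step 4: injectivity.} The $\N_j$ lie in the distinct classes $C_j$, so they are pairwise non-isomorphic. This gives $I(T,\Pinf{k+1})=m\leq I(T,\Sinf{k+1})$.

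The main obstacle is Step 2. The argument hinges on the class $C_j$ being definable with complexity in both $\Sinf{k+1}$ and $\Pinf{k+1}$, and on the $\Sinf{\alpha+1}\cap\Pinf{\alpha+1}\Rightarrow\dSinf{\alpha}$ collapse from \cite{AGNHTT}. Both steps need checking, as does the low-level bookkeeping that $T$ itself fits into $\Sinf{k+1}$; it is exactly this bookkeeping that breaks at $k=1$.
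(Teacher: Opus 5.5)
Your argument is correct for $k\geq 2$ and is essentially the paper's proof. In both, the $\equiv_k$-class of a $\Pinf{k+1}$ model is definable over $T$ by a sentence in $\Sinf{k+1}\cap\Pinf{k+1}$, so it cannot be a singleton. Applying \cref{lem:havingSigmamodels} to ``in the class and not isomorphic to $\mathcal{M}_j$'' then gives a model of complexity at most $\Sinf{k+1}$, and uniqueness of the label forces it to be exactly $\Sinf{k+1}$.

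Your Step~2 does not need the collapse from \cite{AGNHTT}. For $k\geq 2$, $B_j$ is a finite conjunction of $\Sinf{k}$ and $\Pinf{k}$ sentences, and $T\in\Pinf{2}\subseteq\Pinf{k}$. So $T\land B_j$ is already $\dSinf{k}$, and it would directly be a $\dSinf{k}$ Scott sentence if $C_j=\{\mathcal{M}_j\}$.

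Your doubt about $k=1$ is justified, and the answer is that this instance of the statement is false. Take any $\Pinf{2}$ Ehrenfeucht theory whose models are infinite. \cref{lem:smallnessandgreaterscimpliessc} gives $I(T,\Pinf{2})\geq 1$, but $I(T,\Sinf{2})=0$ because no infinite structure has a $\Sinf{2}$ Scott sentence. Concrete counterexamples are the Scott sentence of $\eta$, or either theory in \cref{prop:2ModelsClassified}.

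The paper's proof has the same unnoticed flaw at $k=1$. There the set $E(\mathcal{M},1)$ includes the $\Pinf{2}$ sentence $T$, so it is not $\Sinf{2}$-definable and \cref{lem:havingSigmamodels} does not apply. The proposition should therefore be read with $k\geq 2$. This matches \cref{thm:smallSupportChar}, whose constraint $I(T,\Pinf{2})>0$ contradicts the $k=1$ case.
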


\begin{proof}
Divide the models of $T$ into finitely many $k$-back-and-forth classes.
As in \cref{lem:smallEasybnfDescription}, let $E(\+M,k):=\{\+N : \+N\models T \land \+N\equiv_k \+M\}$ for a fixed $\+M$.
Using the notation of \cref{lem:smallEasybnfDescription}, we have that 
\[\+N\in E(\+M,k) \iff \+N \models T \land \+N\models\bigwwedge_{i>0} \lnot\psi_i,\]
where the conjunction ranges over all $k$-back-and-forth classes and $\psi_i$ is either $\Pinf{k}$ or $\Sinf{k}$.
In particular, as we only have finitely many classes, the set $E(\+M,k)$ is both $\Pinf{k+1}$ and $\Sinf{k+1}$-definable.
This means that if we have a model $\+N$ with $SC(\+N)=\Pinf{k+1}$, we must not have $\{\+N\}=E(\+N,k)$.
Therefore, there must be another model $\+A\in E(\+N,k)$ and thus the theory $\+A\in E(\+N,k)\land \+A\not\cong \+N$ is consistent. Furthermore, it is $\Sinf{k+1}$.
Hence, there must be a model $\+A$ with $\+A\equiv_k \+N$ and $SC(\+A)\leq\Sinf{k+1}$.
If $SC(\+A)\leq\Pinf{k+1}$, then $\+A\in E(\+N,k)\implies \+N\cong \+A$ by \cref{cor:onlyOneSRalphaPerClass}, a contradiction.
Therefore, $SC(\+A)\geq\Sinf{k+1}$ and so $SC(\+A)=\Sinf{k+1}$.
In other words, for each model with $SC(\+N)=\Pinf{k+1}$, there is at least one other model in its $k$-back-and-forth class, and this model is of complexity $\Sinf{k+1}$, proving the claimed result.
\end{proof}

For the sake of completeness, we point out the following corollary to the above methods.
The corollary puts a finer point on the necessary abundance of models with $\Sigma$ complexity.

\begin{corollary}
If $T$ is a $\Pinf{2}$ Ehrenfeucht theory and $\+M\models T$ with $SC(\+M)=\Pinf{k+1}$, then there exists some $\+A$ with $\+A\equiv_k \+N$ and $SC(\+A)=\Sinf{k+1}$.
\end{corollary}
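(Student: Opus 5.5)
The corollary is essentially a restatement of the core step in the proof of \cref{prop:finiteSpectralInequality}, so I will extract that argument and make it standalone. (The statement writes $\+A\equiv_k\+N$; I read $\+N$ as the given model $\+M$.) Fix $\+M\models T$ with $SC(\+M)=\Pinf{k+1}$ and consider the back-and-forth class
\[E(\+M,k)=\{\+N:\+N\models T\land \+N\equiv_k\+M\}.\]
Because $T$ is Ehrenfeucht, there are only finitely many $\equiv_k$-classes among its models. For each class other than that of $\+M$, pick a $\Sinf{k}$ or $\Pinf{k}$ sentence $\psi_i$ true in $\+M$ and false in that class. The conjunction $T\land\bigwedge_i\psi_i$ is then a finite conjunction of $\Sinf{k}$ and $\Pinf{k}$ sentences together with the $\Pinf{2}$ sentence $T$. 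Hence $E(\+M,k)$ is both $\Pinf{k+1}$- and $\Sinf{k+1}$-definable when $k\geq1$.

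The first key step is to show that $E(\+M,k)$ contains a model not isomorphic to $\+M$. Otherwise $E(\+M,k)=\{\+M\}$ up to isomorphism, and its $\Sinf{k+1}$ definition would be a $\Sinf{k+1}$ Scott sentence for $\+M$, contradicting $SC(\+M)=\Pinf{k+1}$. Next, the sentence ``$E(\+M,k)$ and not isomorphic to $\+M$'' is consistent. It is the conjunction of a $\Sinf{k+1}$ definition of $E(\+M,k)$ with the negation of a $\Pinf{k+1}$ Scott sentence of $\+M$, and is therefore $\Sinf{k+1}$. Its models are among the finitely many models of $T$, so it is $\Sinf{k-1}$-small. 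Applying \cref{lem:havingSigmamodels} with $\beta=k-1$ gives a model $\+A$ with $\+A\equiv_k\+M$, $\+A\not\cong\+M$ and $SC(\+A)\leq\Sinf{k+1}$.

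The remaining, and main, step is to pin $SC(\+A)$ exactly at $\Sinf{k+1}$. Suppose instead that $SC(\+A)\leq\Pinf{k+1}$; this also covers $\dSinf{k}$ and anything lower. Then $SR(\+A)\leq k$ and $SR(\+M)\leq k$, and since $\+A\equiv_k\+M$, \cref{cor:onlyOneSRalphaPerClass} (via \cref{prop:uniqueminSR}) forces $\+A\cong\+M$, a contradiction. The only complexities at most $\Sinf{k+1}$ that are not at most $\Pinf{k+1}$ are exactly $\Sinf{k+1}$, so $SC(\+A)=\Sinf{k+1}$.

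The point needing care is that \cref{cor:onlyOneSRalphaPerClass} is stated for equal Scott ranks $\alpha$. Here I will use \cref{prop:uniqueminSR} directly. From $SR(\+A)\leq k$ and $\+A\equiv_k\+M$ we get $\+A\geq_{k+1}\+M$. Since $\+M$ has a $\Pinf{k+1}$ Scott sentence, which transfers upward along $\leq_{k+1}$, $\+A$ satisfies it, so $\+A\cong\+M$.
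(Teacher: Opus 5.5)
Your argument is the paper's own: the corollary is lifted from the proof of \cref{prop:finiteSpectralInequality}. For $k\geq 2$ it is complete. Calling \cref{prop:uniqueminSR} directly instead of the equal-rank \cref{cor:onlyOneSRalphaPerClass} is a welcome bit of extra care, since a priori $SR(\A)<k$ is possible.

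The one genuine error is the claim that $E(\+M,k)$ is $\Sinf{k+1}$-definable ``when $k\geq 1$''. The conjunct $T$ is only $\Pinf{2}$, and $\Pinf{2}\subseteq\Sinf{k+1}$ needs $k\geq 2$. At $k=1$ the set $E(\+M,1)$ is in general $\Pinf{2}$ but not $\Sinf{2}$, which breaks two steps:
\begin{itemize}
\item $E(\+M,1)=\{\+M\}$ no longer yields a forbidden $\Sinf{2}$ Scott sentence;
\item ``$E(\+M,1)$ and $\not\cong\+M$'' is only $\dSinf{2}$, so \cref{lem:havingSigmamodels} does not apply.
\end{itemize}
This cannot be repaired, because the statement is false at $k=1$. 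If $T$ is the $\Pinf{2}$ Scott sentence of $\eta$, its only model has complexity $\Pinf{2}$. Every $\A\equiv_1\eta$ is infinite, so by the remark in the preliminaries it has complexity at least $\Pinf{2}$, never $\Sinf{2}$.

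So the corollary should be read for $k\geq 2$, which is the case used in \cref{thm:smallSupportChar}. The same applies to \cref{prop:finiteSpectralInequality}, whose proof in the paper makes the same slip. With $k\geq 2$, your proof goes through as written.
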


This corollary is in stark contrast to \cref{cor:onlyOneSRalphaPerClass}.
In this setting, $\Pinf{k+1}$ complexity structures are always $k$-equivalent to $\Sinf{k+1}$ complexity structures, but never $k$-equivalent to other $\Pinf{k+1}$ complexity structures.
As indicated above, the property in \cref{prop:finiteSpectralInequality} is not symmetric.
We demonstrate this with the following standard set of examples.

\begin{example}\label{ex:kcutModels}
Let $C_k$ be a theory over the vocabulary containing one binary relation symbol $<$, infinitely many constants $c_i$, and $k$ unary predicates $\{U_i\}_{i<k}$.
$C_k$ states that
\begin{enumerate}
	\item $<$ is a dense linear ordering without endpoints,
	\item the predicates $U_i$ partition the domain,
	\item $U_0(c_i)$ holds for every $i$,
	\item $c_i<c_j$ exactly when $i<j$.
\end{enumerate}
\end{example}

It is well known that $C_k$ has exactly $k+2$ models (see for example~\cite[Exercise 2.5.28]{marker2002}):
\begin{enumerate}
	\item $\+U$ in which the $c_i$ are unbounded
	\item $\+L_j$ in which the $c_i$ are bounded and have a limit realized by a $U_j$ point in $L$
	\item $\+N$ in which the $c_i$ are bounded but have no limit realized in $N$.
\end{enumerate}
\begin{proposition}\label{prop:kcutAnalysis}
$I(C_k,\Pinf{2})=1$, $I(C_k,\Pinf{3})=1$, and $I(C_k,\Sinf{3})=k$.
\end{proposition}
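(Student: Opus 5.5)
We will compute the Scott complexity of each of the $k+2$ models of $C_k$ separately, reusing the template of \cref{ex:cutModels} and then the counting tools of this section.

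\emph{The model $\+U$.} The $\Pinf{2}$ sentence $U$ saying that the $c_i$ are cofinal, together with $C_k$, is a Scott sentence for $\+U$. Here $C_k$ itself is $\Pinf{2}$, since partitioning the domain by the $U_i$ is $\Pinf{1}$ and density without endpoints is $\Pinf{2}$. As $\+U$ is infinite, this gives $SC(\+U)=\Pinf{2}$.

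\emph{The model $\+N$.} We copy the $1$-freeness argument from \cref{ex:cutModels}. Take $x$ above all $c_i$ with $U_0(x)$. Given a challenge tuple $x,\bar y,\bar z,\bar w$ of size $n$, we move $x,\bar z$ into an interval $(c_m,c_{m+1})$ with $m$ large. We keep each point's $U_j$-colour, which is possible because each $U_j$ is dense: the order is dense, the $U_j$ partition it, and the models are exactly the listed ones, so each colour class in $\+N$ is dense in any interval. Since $\+N$ is not $\+U$, there is a point above all the $c_i$, and hence we may take such an $x$. The same argument over any finite parameter set gives $SR_p(\+N)\geq 2$. The sentence $N$ from \cref{ex:cutModels}, together with $C_k$, is a $\Pinf{3}$ Scott sentence, so $SC(\+N)=\Pinf{3}$.

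\emph{The models $\+L_j$.} Each $\+L_j$ is axiomatized within $C_k$ by
\[\exists y\,\Big(U_j(y)\land \bigwwedge_i c_i<y \land \forall z\,\big(\bigwwedge_i c_i<z\to y\leq z\big)\Big),\]
which is $\Sinf{3}$. So $SC(\+L_j)\leq\Sinf{3}$, and this upper bound needs no further work. For the lower bound, suppose that $SC(\+L_j)\leq\dSinf{2}$. Then $C_k\land\lnot U\land\lnot\phi_{\+L_j}$, with $\phi_{\+L_j}$ a $\dSinf{2}$ Scott sentence, is a $\Sinf{3}$ sentence. It is $\Sinf{1}$-small because it has finitely many models. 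Its models are $\+N$ and the $\+L_{j'}$ for $j'\neq j$. This alone gives no contradiction when $k\geq2$, since an $\+L_{j'}$ could serve as the model provided by \cref{lem:havingSigmamodels}.

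Instead we argue via \cref{cor:onlyOneSRalphaPerClass}, with the $\Pinf{2}$ case handled separately. If $SC(\+L_j)=\Pinf{2}$, then $SR(\+L_j)=1$. Now $\+L_j\equiv_1\+U$, since every model realizes the same finite configurations of coloured points and constants, and $SR(\+U)=1$. So \cref{cor:onlyOneSRalphaPerClass} gives $\+L_j\cong\+U$, which is false.

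For the $\Sinf{2}$ and $\dSinf{2}$ cases, note first that a $\Sinf{2}$ Scott sentence is impossible for an infinite structure. This leaves $SC(\+L_j)=\dSinf{2}$, which forces $SR_p(\+L_j)\leq 1$. The main obstacle is to rule this out directly by exhibiting a tuple that is $1$-free over any finite parameter set $\bar p$.

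\emph{Key step: $1$-freeness in $\+L_j$.} Let $\ell$ be the limit of the $c_i$. Take $x$ with $U_0(x)$ lying strictly between the $c_i$ and $\ell$; such an $x$ cannot exist, since $\ell$ is the least upper bound. So choose instead $x$ slightly above $\ell$ and below every point of $\bar p$ that lies above $\ell$. Given a challenge $x,\bar b$ together with the parameters, the $\exists$-player answers by shifting $x$, and the part of $\bar b$ lying between $\ell$ and $x$, down into some $(c_m,c_{m+1})$ with $m$ large. The player preserves colours and the first $n$ atomic formulas, while everything else, including $\ell$, stays fixed. This works except when $\ell$ itself lies in $\bar b$ or $\bar p$; in that case the order relation between $\ell$ and the moved points flips. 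To cope, we pick $x$ in a gap not adjacent to $\ell$ in the sense of the parameters. If $\ell\in\bar p$, we instead exhibit freeness inside the interval $(\ell,\text{next parameter})$, which is a copy of a dense coloured order with no constants; there we appeal to $\+N$-style freeness relative to an unbounded tail.

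I expect this case analysis to be the delicate part. If it fails, the fallback is a counting argument: apply \cref{prop:finiteSpectralInequality} together with \cref{prop:aboveDSinfGivesSinfPinf} to show that every $\+L_j$ lies in the $\equiv_2$-class of $\+N$, and then use \cref{cor:onlyOneSRalphaPerClass} and \cref{prop:uniqueminSR} to force each non-$\+N$ model in that class to have complexity $\Sinf{3}$.

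Granting the key step, we have $SC(\+L_j)=\Sinf{3}$ for every $j$. Hence $I(C_k,\Pinf{2})=1$, $I(C_k,\Pinf{3})=1$ and $I(C_k,\Sinf{3})=k$.
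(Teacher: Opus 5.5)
Your treatment of $\+U$, of $\+N$, and of the $\Pinf{2}$ case for $\+L_j$ is fine. The key step, however, aims at a false statement. You try to refute $SR_p(\+L_j)\leq 1$, but that inequality only says that $\+L_j$ has a $\Sinf{3}$ Scott sentence, and you wrote one down a paragraph earlier. Concretely, let $\ell$ be the supremum of the $c_i$. Then $(\+L_j,\ell)$ is axiomatized by the $\Pinf{2}$ sentence $C_k\land U_j(\ell)\land\bigwwedge_i c_i<\ell\land\forall z\,(\bigvvee_i z<c_i\lor \ell\leq z)$. So $SR(\+L_j,\ell)=1$, and no tuple is $1$-free over $\ell$.

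This is exactly where your case analysis stalls. Once $\ell$ is a parameter, points above $\ell$ are distinguished only by colour and by position relative to the parameters. The freeness in $\+N$ came from the cut having no supremum, and that is no longer available. Parameterized rank does not separate $\dSinf{2}$ from $\Sinf{3}$, since both give $SR_p=1$. The unparameterized rank does, because a $\dSinf{2}$ Scott sentence is also $\Pinf{3}$. So what you must refute is $SR(\+L_j)\leq 2$.

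Your fallback has the right endgame. If $\+L_j\equiv_2\+N$, then since $SR(\+N)\leq 2$, \cref{prop:uniqueminSR} forbids $SR(\+L_j)\leq 2$, and your explicit sentence then gives $SC(\+L_j)=\Sinf{3}$. The gap is the first step. \cref{prop:finiteSpectralInequality} and \cref{prop:aboveDSinfGivesSinfPinf} only produce \emph{one} $\Sinf{3}$ model in the $\equiv_2$-class of $\+N$. For $k\geq 2$ they are consistent with another $\+L_{j'}$ sitting alone in its $\equiv_2$-class with complexity $\dSinf{2}$; for $k=2$ this is one of the four-model spectra realized in \cref{thm:smallSupportChar}. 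Counting alone therefore cannot give $I(C_k,\Sinf{3})=k$, and you need a structural check for each $j$. Two options:
\begin{itemize}
\item Prove $\+L_j\equiv_2\+N$ by back-and-forth. Fix the part of a tuple below the cut. Send the part at or above the cut (including $\ell$) order- and colour-preservingly to points above the cut on the other side. At the next level, answer any point lying between the cut and the first matched point above it by a point of the same colour in some $(c_m,c_{m+1})$, with $m$ beyond every constant and lower point involved.
\item Show that every $\Sinf{2}$ formula true of $\ell$ also holds of any $y>\ell$ of colour $U_j$. Then the orbit $\{\ell\}$ is not $\Sinf{2}$-definable, so $SR(\+L_j)\geq 3$.
\end{itemize}
The paper instead gets one $\Sinf{3}$ model from \cref{prop:aboveDSinfGivesSinfPinf} and extends the conclusion to the other $\+L_j$ by symmetry. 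A direct check as above also covers $\+L_0$, whose limit has the same colour as the $c_i$ and so is not literally a symmetric copy of the others.
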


\begin{proof}
We note that $SC(\+U)=\Pinf{2}$ and $SC(\+N)=\Pinf{3}$.
This follows by a slight modification of the arguments already seen in \cref{ex:cutModels}.
By \cref{prop:aboveDSinfGivesSinfPinf}, one of the $\+L_j$ must have $SC(\+L_j)=\Sinf{3}$.
The apparent symmetry allows us to conclude that for all $j$, $SC(\+L_j)=\Sinf{3}$, showing the desired claim.
\end{proof}

When it comes to Scott spectra with support strictly below $\dSinf{3}$, these observations and examples explain and exhaust all possible behavior.

\begin{theorem}\label{thm:smallSupportChar}
Let $Supp({<}\dSinf{3})$ be the set of Scott spectra of $\Pinf{2}$ theories with finitely many models, all with Scott complexity strictly below $\dSinf{3}.$
$Supp({<}\dSinf{3})$ contains exactly the spectra subject to the following constraints:
\begin{enumerate}
	\item $I(T,\Pinf{3}) \leq I(T,\Sinf{3})$,
	\item $I(T,\Pinf{3})>0\implies I(T,\Sinf{3})>0$,
	\item $I(T,\Pinf{2})>0$.
\end{enumerate}
\end{theorem}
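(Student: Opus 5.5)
We prove both inclusions. Necessity: let $T$ be a $\Pinf{2}$ theory with finitely many models, all of complexity below $\dSinf{3}$. Then (3) is \cref{lem:smallnessandgreaterscimpliessc} with $\beta=1$: $T$ is Ehrenfeucht, so $vo(T)>1$ as long as $T$ has an infinite model. Constraint (1) is \cref{prop:finiteSpectralInequality} with $k=2$. Constraint (2) follows from (1), and also from \cref{prop:aboveDSinfGivesSinfPinf} with $n=2$, since a $\Pinf{3}$ model lies above $\dSinf{2}$. The only complexities available are $\Pinf{2}$, $\dSinf{2}$, $\Pinf{3}$ and $\Sinf{3}$. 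So a spectrum in $Supp({<}\dSinf{3})$ is a tuple $(a,d,p,s)$ of counts for these four complexities, with $a\geq 1$ and $p\leq s$. The count $d$ is unconstrained.

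Sufficiency: fix $(a,d,p,s)$ with $a\geq1$ and $p\leq s$. We build a realizing $\Pinf{2}$ theory from the operations ${}^+$ and ${}^\wedge$ and the base examples.
\begin{itemize}
\item $D$ is the two-model theory from \cref{prop:2ModelsClassified}, with counts $(1,1,0,0)$.
\item $C_k$ for $k\geq1$ is from \cref{ex:kcutModels}. By \cref{prop:kcutAnalysis} it has counts $(1,0,1,k)$.
\item $S$ is a single $\Pinf{2}$ Scott sentence, with counts $(1,0,0,0)$.
\end{itemize}
Each of these has exactly one $\Pinf{2}$ model, so ${}^\wedge$ applies to them. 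By the lemma on ${}^\wedge$, an iterated wedge of theories each with a unique $\Pinf{2}$ model again has a unique $\Pinf{2}$ model, while the counts at all other complexities add. The operation ${}^+$ adds one $\Pinf{2}$ model and changes nothing else. Both operations preserve being $\Pinf{2}$ and having finitely many models.

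We build the theory as follows.
\begin{enumerate}
\item If $p\geq1$, write $s=(p-1)\cdot 1+(s-p+1)$ and take
\[C_1^\wedge\cdots^\wedge C_1{}^\wedge C_{s-p+1},\]
with $p-1$ copies of $C_1$. This has counts $(1,0,p,s)$.
\item If $p=0<s$, we still need $\Sinf{3}$ models without any $\Pinf{3}$ model. This is the main obstacle. We first look for a base example with counts $(1,0,0,m)$, such as a cut theory in which the $\Pinf{3}$-type model $\+N$ is excluded. Excluding $\+N$ amounts to asserting a limit exists, which is $\Sinf{3}$ and so not allowed in a $\Pinf{2}$ theory. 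The fallback is to Morleyize an appropriate sentence and check the complexities directly. If that also fails, the characterization may implicitly need $p\geq1$ whenever $s\geq1$; we will test this against (1)–(3).
\item Wedge in $d$ copies of $D$ to add $d$ models of complexity $\dSinf{2}$.
\item Apply ${}^+$ a total of $a-1$ times to raise the $\Pinf{2}$ count to $a$.
\end{enumerate}
Steps 1, 3 and 4 are routine bookkeeping with the lemmas on ${}^+$ and ${}^\wedge$. The substantive work is the case $p=0<s$ in step 2.
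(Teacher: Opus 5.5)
Your necessity analysis stops one step short, and that is what leaves you stuck in step 2. You apply \cref{prop:aboveDSinfGivesSinfPinf} with $n=2$ only to a $\Pinf{3}$ model. But a $\Sinf{3}$ model also has complexity above $\dSinf{2}$, so the same proposition produces a model of complexity $\Pinf{3}$. Hence every theory in question satisfies $I(T,\Sinf{3})>0\implies I(T,\Pinf{3})>0$, i.e.\ $s>0\implies p>0$ in your notation.

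So the case $p=0<s$, which you call the main obstacle, is not hard but impossible. No $\Pinf{2}$ theory with finitely many models has counts $(1,0,0,m)$ with $m\geq 1$, so neither a modified cut theory nor a Morleyization can supply one. As written, your sufficiency part tries to realize every $(a,d,p,s)$ with $a\geq 1$ and $p\leq s$, which is a false target, and the argument is left unfinished exactly there. Your closing guess that $p\geq 1$ is needed whenever $s\geq 1$ is correct, but it needs the one-line proof above rather than testing against (1)--(3).

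The confusion comes from the statement itself. Constraint (2) as printed, $I(T,\Pinf{3})>0\implies I(T,\Sinf{3})>0$, is a trivial consequence of (1), as you observed. Read literally, it would make the ``exactly'' claim false. The paper's own proof uses the converse implication $I(T,\Sinf{3})>0\implies I(T,\Pinf{3})>0$, justified by \cref{prop:aboveDSinfGivesSinfPinf}, and that is the intended constraint.

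With that constraint in place, only the cases $p\geq 1$ and $p=s=0$ remain, and your steps 1, 3 and 4 coincide with the paper's construction. When $s>0$ it takes $(C_{s-p+1}{}^\wedge\bigwedge_{p-1}C_1{}^\wedge\bigwedge_d D)^{(a-1)+}$. When $s=0$ it takes $(\bigwedge_d D)^{(a-1)+}$, where the empty wedge is a $\Pinf{2}$ Scott sentence (your $S$).
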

\begin{proof}
We first note that the three listed constraints must hold for spectra in $Supp(<\dSinf{3})$.
The first follows from \cref{prop:finiteSpectralInequality}.
The second follows from \cref{prop:aboveDSinfGivesSinfPinf}.
The last follows from \cref{lem:smallnessandgreaterscimpliessc}.

We now show that we can obtain every spectrum subject to our constraints.
We construct a $\Pinf{2}$ theory $T$ with finitely many models and $I(T,\Pinf{2})=a$, $I(T,\dSinf{2})=b$, $I(T,\Pinf{3})=c$, and $I(T,\Sinf{3})=d$ where $a>0$, $c\leq d$ and $d>0\implies c>0$.
We use $T^{k+}$ to denote the plus operator applied to $T$, $k$ times and use $\bigwedge_rT$ to denote $T^{\wedge}\cdots^\wedge T$, $r$ times.
By convention, $T^{0+}=T$ and $\bigwedge_0T$ is a $\Pinf{2}$ Scott sentence.
Furthermore, we take the $C_k$ from \cref{ex:kcutModels} and $D$ the non-trivial example from \cref{prop:2ModelsClassified}. 
If $d>0$, consider the theory
\[ T= (C_{d-c+1}{}^\wedge \bigwedge_{c-1}C_1{}^\wedge\bigwedge_b D)^{(a-1)+}.\]
Note that $c,a>0$, so $T$ is well defined.
By construction,
\begin{itemize}
	\item $I(T,\Pinf{2})=1+(a-1)=a$,
	\item $I(T,\dSinf{2})= b$,
	\item $I(T,\Pinf{3})=1+(c-1)=c$,
	\item $I(T,\Sinf{3})=(d-c+1)+(c-1)=d$,
\end{itemize}
as desired.
If $d=0, c=0$ as well.
In this case,
\[ T= (\bigwedge_b D)^{(a-1)+}, \]
and $I(T,\Pinf{2})=1+(a-1)=a$ while $I(T,\dSinf{2})= b$, as desired.
\end{proof}
These constructions give us a lot of insight into theories with four models.
\begin{proposition}
$7\leq F(4)\leq 8$.
\end{proposition}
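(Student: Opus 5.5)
The plan is to enumerate the candidate spectra for a $\Pinf{2}$ theory $T$ with exactly four models, splitting according to whether some model reaches complexity $\dSinf{3}$. The upper bound comes from Proposition~\ref{prop:finiteHeight}, Proposition~\ref{prop:aboveDSinfGivesSinfPinf} and Theorem~\ref{thm:smallSupportChar}. The lower bound comes from the realizations constructed in the proof of Theorem~\ref{thm:smallSupportChar}.

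\emph{Step 1: restrict the possible complexities.} Since $4=2N+2$ with $N=1$, Proposition~\ref{prop:finiteHeight} gives $SC(\+M)\leq\dSinf{3}$ for every $\+M\models T$. So the only complexities that can occur are $\Pinf{2}$, $\dSinf{2}$, $\Pinf{3}$, $\Sinf{3}$ and $\dSinf{3}$. Note that $\Sinf{2}$ is excluded because infinite structures have complexity at least $\Pinf{2}$; finite models would have to be dealt with separately, or assumed away as in the rest of the section.

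\emph{Step 2: the case where some model has complexity $\dSinf{3}$.} Such a model has complexity $>\dSinf{2}$, so Proposition~\ref{prop:aboveDSinfGivesSinfPinf} with $n=2$ yields one model of complexity $\Sinf{3}$ and one of complexity $\Pinf{3}$. Lemma~\ref{lem:smallnessandgreaterscimpliessc} yields a model of complexity $\Pinf{2}$. These four models are pairwise distinct, so they exhaust the models of $T$. This case therefore contributes at most the single spectrum in which $\Pinf{2},\Pinf{3},\Sinf{3},\dSinf{3}$ each occur once.

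\emph{Step 3: the case where every model is strictly below $\dSinf{3}$.} Here Theorem~\ref{thm:smallSupportChar} characterizes the spectra exactly. Write $a,b,c,d$ for the counts at $\Pinf{2},\dSinf{2},\Pinf{3},\Sinf{3}$, so that $a+b+c+d=4$. The constraints are $a\geq1$ and $c\leq d$. In addition, Proposition~\ref{prop:aboveDSinfGivesSinfPinf} gives $d>0\Rightarrow c>0$, which is the form the construction in the theorem actually needs.

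\begin{itemize}
\item If $c=d=0$, then $a\in\{1,2,3,4\}$ with $b=4-a$, giving 4 spectra.
\item If $c=d=1$, then $a+b=2$ with $a\geq1$, giving 2 spectra.
\item If $c=1$ and $d=2$, then $a=1$ and $b=0$, giving 1 spectrum.
\item Any larger values force $c+d\geq4$, leaving no room for $a\geq1$.
\end{itemize}

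This gives exactly 7 spectra. Each is realized by the theory $(C_{d-c+1}{}^\wedge\bigwedge_{c-1}C_1{}^\wedge\bigwedge_b D)^{(a-1)+}$, or by $(\bigwedge_b D)^{(a-1)+}$ when $d=0$. Hence $F(4)\geq7$, and adding the at most one spectrum from Step 2 gives $F(4)\leq8$.

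The only delicate point I expect is bookkeeping rather than mathematics. I need to double-check that the seven listed spectra really are distinct and are all covered by the theorem's construction; for instance, the case $c=d=1$ uses $C_1$ with no extra conjuncts. I also need to confirm that Step 2 yields at most one additional spectrum, rather than trying to decide whether it is realized. Deciding that realizability would be exactly what separates $7$ from $8$, and it is not required here.
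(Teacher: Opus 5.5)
Your proposal is correct and follows the paper's own proof: the seven spectra strictly below $\dSinf{3}$ come from \cref{thm:smallSupportChar} and its realizing constructions. The single possible spectrum reaching $\dSinf{3}$ is forced by \cref{prop:finiteHeight}, \cref{prop:aboveDSinfGivesSinfPinf} and \cref{lem:smallnessandgreaterscimpliessc}; your explicit use of $d>0\Rightarrow c>0$ (from \cref{prop:aboveDSinfGivesSinfPinf}) is the right reading of constraint (2) there, and the paper's count of seven relies on it implicitly.
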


\begin{proof}
By \cref{thm:smallSupportChar}, the following spectra are all possible.
For $1\leq n\leq 4$ and $1\leq m\leq 2$,
\begin{align*} &I(T,\Pinf{2})=n , I(T,\dSinf{2})=4-n , \text{and}  I(T,\Gamma)=0 \text{ otherwise;}\\
 &I(T,\Pinf{2})=1 , I(T,\Pinf{3})=1 , I(T,\Sinf{3})=2 , \text{and} I(T,\Gamma)=0 \text{ otherwise;}\\
 &I(T,\Pinf{2})=m ,  I(T,\dSinf{2})=2-m ,  I(T,\Pinf{3})=1 , I(T,\Sinf{3})=1 , \text{and} I(T,\Gamma)=0 \text{ otherwise}\end{align*}
This gives that $7\leq F(4)$.
By \cref{thm:smallSupportChar}, these are the only spectra supported strictly below $\dSinf{3}$.
If there is a model of $T$ with Scott complexity at least $\dSinf{3}$, by \cref{prop:finiteHeight} it has Scott complexity exactly $\dSinf{3}$.
Furthermore, by \cref{prop:aboveDSinfGivesSinfPinf}, $T$ has a model of Scott complexity $\Sinf{3}$ and $\Pinf{3}$.
By \cref{lem:smallnessandgreaterscimpliessc}, $T$ also has a model of Scott complexity $\Pinf{2}$.
This means that it must be that 
\[ I(T,\Pinf{2})=1 ,  I(T,\dSinf{3})=1 ,  I(T,\Pinf{3})=1 , I(T,\Sinf{3})=1 ,\text{ and } I(T,\Gamma)=0 \text{ otherwise.}\]
Therefore, there is only one Scott spectrum that is not accounted for, and $F(4)\leq 8$.
\end{proof}

We leave open the exact value of $F(4)$ and, in general, for $F(n)$ with $n\geq 4$.

\begin{question}
What is the value of $F(4)$ and of $F(n)$ for $n\geq 4$? Can a $\Pinf{2}$ Ehrenfeucht theory have a model of any Scott complexity corresponding to a finite Scott rank?
\end{question}

We suspect that $F(4)=8$.
The reason for this is that there are examples of $\Pinf{2}$ Ehrenfeucht theories with a model as complex as $\dSinf{3}$, which is the maximum complexity needed to realize the unaccounted for spectrum.
This means that there is no reason that models of Ehrenfeucht theories must avoid the needed complexities, or, in particular, must be simpler than the needed models.
We give an example of a $\Pinf{2}$ Ehrenfeucht theory with a $\dSinf{3}$ model below.

\begin{proposition}
There is a $\Pinf{2}$ theory $T$ with exactly 6 models where one of the models has Scott complexity $\dSinf{3}$.
\end{proposition}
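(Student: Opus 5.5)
We construct a single-sorted variant of the cut theory $E_3$ from \cref{ex:cutModels} that has two sequences of constants, so that one model combines a $\Sinf{3}$-type feature with a $\Pinf{3}$-type feature.

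\textbf{The theory.} Work in the vocabulary $(<,(c_i)_{i\in\omega},(d_i)_{i\in\omega})$. Let $T$ be the $\Pinf{2}$ sentence (in fact a conjunction of $\Pinf{2}$ and atomic sentences) saying:
\begin{itemize}
\item $<$ is a dense linear order without endpoints;
\item $c_i<c_j$ iff $i<j$;
\item $d_j<d_i$ iff $i<j$;
\item $c_i<d_j$ for all $i,j$.
\end{itemize}
We classify the countable models by what lies in the gap $G=\{x : \bigwwedge_i c_i<x \land \bigwwedge_j x<d_j\}$. Every countable DLO with endpoints specified is determined up to isomorphism, so the cases are as follows.
\begin{itemize}
\item $G=\emptyset$.
\item $G$ is a single point.
\item $G$ has at least two points. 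Then $G$ is a convex dense set, and we record whether it has a least element (a realized limit of the $c_i$) and whether it has a greatest element (a realized limit of the $d_i$). This gives four possibilities.
\end{itemize}
Density and countability make each case determine a unique model via back-and-forth. This gives exactly six models. Verifying this is routine and mirrors the count of $3$ models for $E_3$ in \cite[Exercise 2.5.28]{marker2002}.

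\textbf{The candidate model.} Let $\A$ be the model in which $G$ has a least element but no greatest element. Write $\mathcal{B}'$ for the model where $G$ has neither endpoint, and $\mathcal{B}''$ for the model where $G$ has both.

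The upper bound is direct. $\A$ is axiomatized within $T$ by two sentences:
\begin{itemize}
\item a $\Sinf{3}$ sentence "there is $x$ above all $c_i$ with no $y$ satisfying $c_i<y<x$ for all $i$" (which also forces $|G|\ge 2$ once combined with the next clause);
\item a $\Pinf{3}$ sentence, the analogue of the sentence $N$ from \cref{ex:cutModels} applied to the $d_i$, saying that no point of $G$ is a limit of the $d_i$, together with the $\Sinf{2}$ statement that $G$ has at least two points.
\end{itemize}
So $SC(\A)\le\dSinf{3}$.

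\textbf{The lower bounds.} It remains to show $SC(\A)\not\le\Sinf{3}$ and $SC(\A)\not\le\Pinf{3}$. We use Karp's theorem together with one non-isomorphic companion model for each half.
\begin{itemize}
\item \emph{Not $\Pinf{3}$.} We show $\A\le_3\mathcal{B}'$, i.e.\ every $\Pinf{3}$ sentence true in $\A$ is true in $\mathcal{B}'$. Then a $\Pinf{3}$ Scott sentence for $\A$ would hold in $\mathcal{B}'\not\cong\A$.
\item \emph{Not $\Sinf{3}$.} We show $\mathcal{B}''\le_3\A$, i.e.\ every $\Sinf{3}$ sentence true in $\A$ is true in $\mathcal{B}''$. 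Then a $\Sinf{3}$ Scott sentence for $\A$ would hold in $\mathcal{B}''$.
\end{itemize}

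\textbf{The main obstacle.} The hard step is verifying these two $\le_3$ relations via the game. We reduce each to $\le_2$ instances, using the freeness argument from \cref{ex:cutModels}: a point just beyond all $c_i$ (or just below all $d_j$) can be imitated, up to the first $n$ formulas, by a point placed between $c_m$ and $c_{m+1}$ (resp.\ $d_{m+1}$ and $d_m$) for large $m$.

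For $\A\le_3\mathcal{B}'$, the $\forall$-player first picks a tuple $\bar b$ in $\mathcal{B}'$, and the $\exists$-player answers with $\bar a$ in $\A$ so that $(\mathcal{B}',\bar b)\le_2(\A,\bar a)$. The key move is to copy $\bar b$ so that the gap-part of $\bar b$ lands strictly inside $G^{\A}$, avoiding its least element. We must then check that the $\le_2$ game survives the $\forall$-player later querying the least element of $G^{\A}$. The $\exists$-player answers with a point far out along the $c_i$ in $\mathcal{B}'$, which agrees on the first $n$ formulas; this is exactly the $1$-freeness trick.

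The second relation $\mathcal{B}''\le_3\A$ is symmetric: the $\exists$-player avoids the greatest element of $G^{\mathcal{B}''}$ in the first move, and imitates it by points near the $d_j$ afterwards.

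If the direct game check becomes messy, a fallback is available. In $\A$ the least point of $G$ is $1$-free in the relevant sense, so $SR_p(\A)\ge 2$, which rules out $SC(\A)\le\Sinf{3}$. Symmetrically, $\A$ is not determined by its $\Pinf{3}$ theory among models of $T$, which rules out $\Pinf{3}$.
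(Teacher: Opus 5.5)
Your theory, the count of six models, and the $\dSinf{3}$ upper bound for $\mathcal{A}$ are all fine. The two relations you aim for, $\mathcal{B}''\le_3\mathcal{A}\le_3\mathcal{B}'$, are also true, so the construction itself works. The gap is in the strategy you give for these relations: as written, it loses.

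In these models, $\le_1$ between tuples amounts to equality of full atomic types over all the constants. Every $\Sinf{1}$ fact mentions only finitely many constants and is realizable by density. On the other hand, the $\forall$-player can lengthen the tuples so that any given atomic formula, such as $x>c_{m+1}$, is among the formulas compared at the final step. Now consider the game $(\mathcal{B}',\bar b)\le_2(\mathcal{A},\bar a)$. The $\forall$-player plays the least gap point $g_0$ of $\mathcal{A}$ together with $\beta=1$. Your reply $f$ must then satisfy $(\mathcal{A},\bar a g_0)\le_1(\mathcal{B}',\bar b f)$. A point between $c_m$ and $c_{m+1}$ violates $x>c_{m+1}$, so it loses.

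The first-$n$-formulas imitation from \cref{ex:cutModels} is a $1$-freeness argument. It only works at a $\le_0$ node, not at a $\le_1$ node. The correct reply is a point of $G^{\mathcal{B}'}$ below all gap elements of $\bar b$. Such a point exists because $G^{\mathcal{B}'}\cong\eta$ has no least element. With this reply, any tuple over $\bar a$ in $\mathcal{A}$ can be copied over $\bar b$ with the same atomic type.

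The same fix is needed for $\mathcal{B}''\le_3\mathcal{A}$. You must answer the greatest gap point of $\mathcal{B}''$ with a gap point of $\mathcal{A}$ above the gap part of the tuple, not with a point near the $d_j$. This is possible because $G^{\mathcal{A}}$ has no greatest element. Also, in the first move, if the $\forall$-player plays $g_0$, you must send it to the least gap point of $\mathcal{B}''$. Otherwise the $\forall$-player wins by playing a gap point of $\mathcal{B}''$ below its image.

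Your fallback has the two halves crossed.
\begin{itemize}
\item Against $\Sinf{3}$: the point $g_0$ is exactly the parameter that trivializes the $c$-side. Once it is named it is not free, so it cannot witness $SR_p(\mathcal{A})\ge 2$. The right witness is a gap point above all the parameters $\bar p$. It is $1$-free over $\bar p$: push it, and the gap points above it, to between $d_{M+1}$ and $d_M$ for large $M$.
\item Against $\Pinf{3}$: $g_0$ is $2$-free. Compare it with a gap point just above it, which satisfies the $\Sinf{2}$ fact that some gap point lies below it. Hence $SR(\mathcal{A})\ge 3$.
\end{itemize}

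For comparison, the paper places two independent copies of $E_3$ in two $E$-classes, so it never analyzes level-$3$ games by hand. It gets $\mathcal{N}\ge_3\mathcal{L}$ abstractly from the fact that $\mathcal{L}$ has Scott complexity exactly $\Sinf{3}$. Playing componentwise then gives $(\mathcal{L},\mathcal{L})\le_3(\mathcal{N},\mathcal{L})\le_3(\mathcal{N},\mathcal{N})$. In your single order the two sides share the gap, so this componentwise shortcut is unavailable and the explicit game has to be done correctly.
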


\begin{proof}
$T$ will resemble a duplication of the theory from \cref{ex:cutModels}.
$T$ will be over the vocabulary $<,E,\{P_i\}_{i\in\omega}$ where $<$ and $E$ are binary relations and each of the $P_i$ are unary relations.
$T$ states that:
\begin{enumerate}
	\item $E$ is an equivalence relation with exactly two classes.
	\item $<$ linearly orders each equivalence class and does not hold for any pair of elements not in the same equivalence class.
	\item The order type of each equivalence class is a dense linear ordering without endpoints.
	\item Each $P_i$ holds for exactly two points that are in different equivalence classes.
	\item If $P_i(x)\land P_{i+1}(y) \land E(x,y)$, then $x<y$.
\end{enumerate}
It is straightforward to confirm that $T$ is $\Pinf{2}$ given the above description.
Focusing on a single equivalence class, the structure is a dense linear ordering with exactly one $P_i$ point for each $i$ that increases as $i$ increases.
In other words, each equivalence class contains a model of the Ehrenfeucht theory $C$ with three models (\cref{ex:cutModels}).
Thus, there are three choices for the isomorphism type of each equivalence class, and so $T$ has six models.
Using the notation from \cref{ex:cutModels}, the models are of the form $(\+U,\+U);(\+U,\+N);(\+U,\+L);(\+N,\+N);(\+N,\+L)$ and $(\+L,\+L)$.

The model of Scott complexity $\dSinf{3}$ is $(\+N,\+L)$.
To see this first note that $(\+N,\+L)$ has a $\dSinf{3}$ Scott sentence:
The $\Sinf{3}$ formula 
\[\exists x\psi(x):=\exists x (\forall y<x) (\exists z>y) \bigvvee_i P_i(z),\]
is true of $\+L$ but not of $\+N$.
We also have the $\Sinf{2}$ formula
\[\exists x\varphi(x):=\exists x(\forall y>x) \bigwwedge_i \lnot P_i(y),\]
which is true of both $\+L$ and $\+N$ yet not true of $\+U$.
Modifying these statements slightly gives the formula
\[\exists x\psi(x) \land \lnot\exists y,z \ \lnot E(y,z) \land \psi(y) \land \psi(z) \land \exists w,v \ \lnot E(w,v) \land \varphi(w) \land \varphi(v).\]
This $\dSinf{3}$ formula states that neither equivalence class is isomorphic to $\+U$ and exactly one is isomorphic to $\+L$.
Together with $T$, the formula gives the desired Scott sentence for $(\+N,\+L)$.

We now show that $(\+N,\+L)$ does not have a simpler Scott sentence.
This will be accomplished by first showing that $\+N\geq_3 \+L$.
Say that this was not the case.
Then there would be a $\Pinf{3}$ formula $\varphi$ such that $\+L\models\varphi$ yet $\+N\models \lnot \varphi$.
We would then have the $\Pinf{3}$ formula $\varphi\land\lnot U\land C$ that would be true of exactly $\+L$ and no other models.
However, as we saw in \cref{ex:cutModels}, $\+L$ has Scott complexity $\Sinf{3}$, a contradiction.
Therefore, $\+N\geq_3 \+L$.
Playing the back-and-forth game independently on each equivalence class, we may now observe that
\[(\+L,\+L)\leq_3(\+N,\+L)\leq_3(\+N,\+N).\]
This means that $(\+N,\+L)$ does not have a $\Pinf{3}$ or $\Sinf{3}$ Scott sentence, and so its simplest Scott sentence is $\dSinf{3}$, as desired.
\end{proof}

\section{Prime models in complete first-order theories}
Vaught's conjecture was originally stated in the context of complete first-order theories.
We now examine some consequences of our tools in this context.
In particular, we examine the general Scott analysis for prime models of first-order theories.
Perhaps unsurprisingly, prime models often have fairly tame Scott complexity.
Below, we say that a first-order theory is $\Pi_m$ if all of its axioms are $\Pi_m$.
We also use the following standard definition.

\begin{definition}
    A complete theory $T$ is \define{$\omega$-stable} if for every $\+M\models T$ and $A\subseteq M$, $\+M$ realizes at most $\max(\aleph_0,|A|)$ many types with parameters from $A$. 
\end{definition}

Note that $\omega$-stable theories only have countably many first-order types (over the empty set) realized among their models.
Furthermore, it is well known that $\omega$-stable theories have prime models \cite[Chapter 7]{marker2002}.

\begin{proposition}\label{prop:Pi2PrimeModel}
Let $T$ be a complete $\Pi_2$, first-order theory that is $\omega$-stable or has fewer than continuum many countable models.
The prime model $\+P$ must have Scott complexity $\Pinf{2}$.
\end{proposition}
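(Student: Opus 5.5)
The plan is to produce, via \cref{lem:havingPimodels}, some model $\A\models T$ with $SR(\A)\leq 1$. We then show that this $\A$ must be isomorphic to $\+P$, using only that $\+P$ elementarily embeds into every model of $T$.

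\emph{Step 1: $T$ is $\Sinf{1}$-small.} The $\Sinf{1}$ type of a tuple $\bar a$ in a model of $T$ is determined by which finitary existential formulas $\bar a$ satisfies. Indeed, every $\Sinf{1}$ formula is a countable disjunction of formulas $\exists\bar y\,\theta$ with $\theta$ finitary quantifier-free, and such a disjunction holds exactly when one of its disjuncts does. In particular, the $\Sinf{1}$ type is determined by the complete first-order type of $\bar a$. So it suffices that only countably many complete first-order types over $\emptyset$ are realized in models of $T$.
\begin{itemize}
\item In the $\omega$-stable case this is immediate from the definition (take $A=\emptyset$).
\item In the case of fewer than $2^{\aleph_0}$ countable models, argue as in \cref{prop:smallvaughts}. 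The type spaces $S_n(T)$ are Polish, so if uncountably many types were realized there would be continuum many, i.e.\ $S_n(T)$ is uncountable and hence of size $2^{\aleph_0}$. Every type is realized in a countable model, and each countable model realizes only countably many types. Hence there would be $2^{\aleph_0}$ countable models, a contradiction.
\end{itemize}

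\emph{Step 2: find a simple model.} Since $T$ is a set of finitary $\Pi_2$ sentences, its conjunction is a $\Pinf{2}$ sentence. By Step 1 and \cref{lem:havingPimodels} (with $\beta=1$), there is a countable $\A\models T$ with $SR(\A)\leq 1$. Thus $\A$ has a $\Pinf{2}$ Scott sentence $\sigma$.

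\emph{Step 3: $\A\cong\+P$.} Because $\+P$ is prime, there is an elementary embedding $\+P\preceq\A$; identify $\+P$ with its image. We show $\A\leq_2\+P$ via the back-and-forth game.
\begin{itemize}
\item The $\forall$-player picks $\bar d\in\+P$ (and $\beta\leq1$).
\item The $\exists$-player answers with the same tuple $\bar d$, viewed in $\A$.
\item It remains to check $(\+P,\bar d)\leq_1(\A,\bar d)$, i.e.\ every $\Pinf{1}$ formula true of $\bar d$ in $\+P$ is true of $\bar d$ in $\A$. A $\Pinf{1}$ formula is a countable conjunction of finitary universal formulas. Each conjunct transfers by elementarity, hence so does the conjunction.
\end{itemize}
Therefore $\A\leq_2\+P$. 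By Karp's theorem every $\Pinf{2}$ sentence true in $\A$ holds in $\+P$; in particular $\+P\models\sigma$, so $\+P\cong\A$. Hence $\+P$ has a $\Pinf{2}$ Scott sentence, and being infinite it cannot have a simpler one, since infinite structures have Scott complexity at least $\Pinf{2}$. So $SC(\+P)=\Pinf{2}$.

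The main subtlety is Step 3, specifically getting the direction of $\leq_2$ right. We need that $\Pinf{2}$ facts of the simple model $\A$ pass down to $\+P$, and this uses only that $\Pinf{1}$ facts pass upward along the elementary embedding. Step 1 in the few-models case relies on the standard perfect-set dichotomy for type spaces, which is routine.
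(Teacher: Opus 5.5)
Your proposal is correct and follows essentially the same route as the paper. You obtain $\Sinf{1}$-smallness from there being only countably many first-order types, apply \cref{lem:havingPimodels} to get a model $\A$ with a $\Pinf{2}$ Scott sentence, and use the elementary embedding $\+P\preceq\A$ to get $\+P\geq_2\A$ and hence $\+P\cong\A$. The only differences are that you spell out details the paper leaves implicit, namely the perfect-set argument in the few-models case and the check of the direction of $\leq_2$.
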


\begin{proof}
If $T$ is $\omega$-stable or has fewer than continuum many countable models, then it has only countably many first-order types realized among its countable models.
Note that all $\Sinf1$ formulas true of a tuple are disjunctions of finitary $\Sigma_1$ formulas.
This means that the finitary type of a tuple determines its $\Sigma_1$ type, and so $T$ is $\Sigma_1$-small.
By \cref{lem:havingPimodels}, $T$ must have a model $\+M$ of Scott complexity $\Pinf{2}$.
We have that $\+P\preceq \+M$, say via the elementary embedding $\sigma$.
In particular, $\+P$ embeds into $\+M$ in a way that preserves all $\Sinf{1}$ formulas.
We claim that $\+P\geq_2\+M$.
The $\exists$-player may play along the elementary embedding $\sigma$ on the first move of the back-and-forth game with the guarantee that $(\+P,\ba)\leq_1(\+M,\sigma(\ba))$ for their winning strategy.
Because $\+M$ has Scott complexity $\Pinf{2}$, we see that $\+P\cong \+M$, and so $\+P$ has Scott complexity $\Pinf{2}$, as desired.
\end{proof}
%
\begin{proposition}\label{prop:pi2impprime}
  If $\+A$ is of Scott complexity $\Pinf{2}$, then $\+A$ is a prime model of its first-order theory.
\end{proposition}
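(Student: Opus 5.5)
To show $\+A$ is prime, I will show that it is atomic: every tuple $\ba\in\+A$ realizes a complete first-order type over $\emptyset$ that is isolated by a single finitary formula. For countable models, atomic is equivalent to prime, so this suffices.

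The starting point is the characterization of Scott rank from the preliminaries. Since $SC(\+A)=\Pinf{2}$, we have $SR(\+A)\leq 1$. Hence every automorphism orbit of $\+A$ is $\Sinf{1}$-definable, i.e.\ defined by a formula $\bigvvee_i \exists \bar y\, \theta_i(\bar x,\bar y)$ with each $\theta_i$ finitary quantifier-free. Fix $\ba$ and its orbit formula. Since $\ba$ satisfies it, some disjunct $\exists\bar y\,\theta_i(\bar x,\bar y)$ holds of $\ba$. Every tuple satisfying this disjunct lies in the orbit, which is defined by the whole disjunction. So the finitary existential formula $\chi(\bar x):=\exists\bar y\,\theta_i(\bar x,\bar y)$ is true of $\ba$ and implies, inside $\+A$, that $\bar x$ is automorphic to $\ba$. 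In other words, $\chi$ defines the orbit of $\ba$ in $\+A$.

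Next I transfer this to the first-order theory $Th(\+A)$. For each first-order formula $\phi(\bar x)$ with $\+A\models\phi(\ba)$, automorphism invariance gives $\+A\models\forall\bar x(\chi(\bar x)\to\phi(\bar x))$. This is a first-order sentence, so it belongs to $Th(\+A)$. Therefore $\chi$ isolates the complete type of $\ba$ modulo $Th(\+A)$. Since $\ba$ was arbitrary, $\+A$ is atomic.

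The standard fact that countable atomic models are prime (\cite[Chapter 4]{marker2002}) then gives that $\+A$ elementarily embeds into every model of $Th(\+A)$, so $\+A$ is prime. The only delicate step is the extraction of a single finitary disjunct from the infinitary $\Sinf{1}$ orbit definition. It works because each disjunct holding of $\ba$ defines a subset of the orbit containing $\ba$, and an orbit has no proper nonempty invariant subsets. I will state this explicitly.
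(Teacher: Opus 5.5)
Your proof is correct and follows essentially the same route as the paper: $\Sinf{1}$-definable orbits give a finitary existential formula isolating the type of each tuple, so $\+A$ is countable atomic and hence prime. Your explicit extraction of a single finitary disjunct spells out the isolation step that the paper's proof leaves implicit, since the paper only writes out the chain showing that tuples with the same type are automorphic.
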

\begin{proof}
  The structure $\+A$ has existentially definable orbits and thus every $\Pinf{1}$-type is existentially supported. In general, for any $\bar a$
  \[ aut(\bar a)=\{ \bar c: \Pinf{1}\-tp(\bar a)=\Pinf{1}\-tp(\bar c)\}=\{ \bar c: \Pi_{1}\-tp(\bar a)=\Pi_{1}\-tp(\bar c)\}\supseteq \{ \bar c: tp(\bar a)=tp(\bar c)\},\] 
  and by the minimality of orbits it follows that any element sharing $\bar a$'s type must be automorphic. This shows that every first-order type is isolated and hence $\+A$ is countable atomic, and therefore also prime.
\end{proof}
We note that \cref{prop:Pi2PrimeModel,prop:pi2impprime} has been independently proven by Jason Block.
\begin{corollary}
A complete, first-order theory has at most one model of Scott complexity $\Pinf{2}$.
\end{corollary}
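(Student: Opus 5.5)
The corollary should follow directly from \cref{prop:pi2impprime} together with the uniqueness of prime models. The plan has three steps.

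First, let $T$ be a complete first-order theory, and suppose $\+A,\+B\models T$ both have Scott complexity $\Pinf{2}$. Both are countable, since Scott complexity is only assigned to countable structures here. Applying \cref{prop:pi2impprime} to each, $\+A$ is a prime model of its first-order theory $Th(\+A)$, and likewise $\+B$ is prime for $Th(\+B)$. Since $T$ is complete, $Th(\+A)=T=Th(\+B)$, so both are prime models of the same complete theory $T$.

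Second, I invoke the classical uniqueness of prime models. A countable model is prime if and only if it is countable atomic, and any two countable atomic models of a complete theory are isomorphic by a back-and-forth argument. In fact, the proof of \cref{prop:pi2impprime} already shows directly that $\+A$ and $\+B$ are countable atomic. Hence $\+A\cong\+B$, and $T$ has at most one model of Scott complexity $\Pinf{2}$.

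The only delicate point is bookkeeping rather than mathematics. The statement concerns countable models, and \cref{prop:pi2impprime} yields primeness over the structure's own complete theory, which is why the completeness of $T$ is needed to identify the two theories. I expect no real obstacle. If one wanted to avoid citing uniqueness of prime models, an alternative is to note that each of $\+A$ and $\+B$ elementarily embeds into the other. One then runs the argument from \cref{prop:Pi2PrimeModel}: playing along the embedding gives $\+A\geq_2\+B$. Because $\+B$ has a $\Pinf{2}$ Scott sentence, which is preserved upward under $\geq_2$, it follows that $\+A\cong\+B$.
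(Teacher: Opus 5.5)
Your proof is correct and takes essentially the route the paper intends: the corollary is stated without proof immediately after \cref{prop:pi2impprime}, and it follows from that proposition together with the uniqueness of countable atomic (prime) models of a complete theory, exactly as you argue. Your alternative via $\geq_2$ is the argument from \cref{prop:Pi2PrimeModel} and is also fine; a single elementary embedding of the prime model $\mathcal{A}$ into $\mathcal{B}$ already suffices there.
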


These observations limit the coding power of complete first-order theories.
Below, an effective bi-interpretation is a strong manner of transforming models of one theory into the models of another theory while preserving all salient properties studied in computable structure theory.
This includes invariants like Scott rank and Scott complexity.
More information about effective bi-interpretation can be found in \cite[Chapter VI.4]{Part1}.
Our corollary only needs the fact that effective bi-interpretation preserves (unparamaterized) Scott rank, so it also applies to any notion of reduction with this property.

\begin{corollary}
If $T$ is a complete first-order theory, then $T$ is not on top for effective bi-interpretation or any other reduction notion that preserves (unparamaterized) Scott rank.
\end{corollary}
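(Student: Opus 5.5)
The plan is to argue by contradiction, using the two facts just established. Suppose $T$ is a complete first-order theory that is on top for effective bi-interpretation, or for some other reduction that preserves unparameterized Scott rank. Being on top means that every countable structure, in any vocabulary, is effectively bi-interpretable with some countable model of $T$. In particular, this holds for all structures in the class of any $L_{\omega_1\omega}$ sentence. The contradiction will come from the corollary just proved: a complete first-order theory has at most one model of Scott complexity $\Pinf{2}$, and that model is the prime model.

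The first step is to turn this into a statement about Scott rank $1$, since the reduction only preserves $SR$ and not Scott complexity. A countable structure has $SR(\M)\le 1$ exactly when it has a $\Pinf{2}$ Scott sentence. Every infinite structure has complexity at least $\Pinf{2}$, and finite structures have lower complexities. So among infinite structures, $SR(\M)=1$ is the same as $SC(\M)=\Pinf{2}$.

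I will handle finite structures by one of two routes. One option is to check that a reduction with infinite image sends finite structures to infinite ones. The cleaner option is to use only infinite source structures and assume the images are infinite, which they must be if $T$ has only infinite models. If $T$ has a finite model, then $T$ is complete and categorical and has only one model, so it certainly cannot be on top.

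The second step picks two non-isomorphic countable infinite structures $\A$ and $\B$ with $SR(\A)=SR(\B)=1$. For example, take the rationals $\eta$ and the ordering $\omega$ with the successor relation added. Any two different infinite structures with $\Pinf{2}$ Scott sentences will do. Their images $\A',\B'\models T$ are non-isomorphic, because a bi-interpretation (or any reduction) is injective on isomorphism types. They also have Scott rank $1$ because the reduction preserves rank. Being infinite, they therefore have Scott complexity $\Pinf{2}$.

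The third step applies \cref{prop:pi2impprime}: both $\A'$ and $\B'$ are prime models of $T$. Prime models of a complete theory are unique up to isomorphism, so $\A'\cong\B'$, which is a contradiction.

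The main obstacle is the bookkeeping about what ``on top'' means. I need to state precisely that the reduction is defined on all countable structures, or at least on two distinct Scott rank $1$ infinite ones, and that it preserves both non-isomorphism and Scott rank exactly rather than only up to a bounded shift. The paper explicitly grants preservation of unparameterized Scott rank, so I will take exact preservation as the hypothesis. The only remaining care is excluding the finite case, which I handle as described in the first step.
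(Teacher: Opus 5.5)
Your proof is correct, but it reaches the key fact, that $T$ has at most one model of Scott rank $1$, by a different route from the paper.

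\textbf{Your route.} You pass through Scott complexity. An infinite model of Scott rank $1$ has complexity $\Pinf{2}$, so it is prime by \cref{prop:pi2impprime}. Prime models of a complete theory are unique up to isomorphism, which gives the bound. Because you pass through complexity, you must split off the case where $T$ has a finite model, and you handle that correctly via categoricity.

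\textbf{The paper's route.} All models of a complete first-order theory satisfy the same finitary existential sentences, hence the same $\Sinf{1}$ and $\Pinf{1}$ sentences, so they are all $\equiv_1$. Then \cref{prop:uniqueminSR}, through \cref{cor:onlyOneSRalphaPerClass}, immediately allows at most one model of Scott rank $1$.

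\textbf{Comparison.} The paper's argument stays inside the back-and-forth framework. It needs neither uniqueness of prime models nor a finite/infinite case split. Your argument gives extra information: the unique Scott rank $1$ model, if it exists, is the prime model.

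\textbf{One correction to your example.} Your sample structure, $\omega$ with the successor relation added, does not have Scott rank $1$. The map $n\mapsto n+1$ is a self-embedding, so every $\Sinf{1}$ formula true of $0$ is also true of $1$. Hence the orbit $\{0\}$ is not $\Sinf{1}$-definable. Use instead, for example, $\eta$ and a countably infinite set with $<$ interpreted as empty. The disjunction of their $\Pinf{2}$ Scott sentences is a theory with two non-isomorphic Scott rank $1$ models, as in the construction in \cref{prop:2ModelsClassified}.
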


\begin{proof}
As seen in \cref{prop:Pi2PrimeModel}, $T$ has only one $1$-back-and-forth equivalence class.
Therefore, $T$ can have at most one model of Scott rank 1 by \cref{prop:uniqueminSR}.
Therefore, $T$ cannot accept an effective bi-interpretation or any other reduction that preserves Scott rank from a theory with more than one model of Scott rank 1.
\end{proof}

Note that the $\omega$-stability assumption is needed in \cref{prop:Pi2PrimeModel}; not all $\Pi_2$ first-order theories have a prime model.
The following standard example demonstrates this claim.

\begin{proposition}
There is a complete, $\Pi_2$ axiomatizable first-order theory with no models of Scott complexity $\Pinf{2}$.
\end{proposition}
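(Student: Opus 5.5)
The plan is to exhibit a complete, $\Pi_2$ axiomatizable first-order theory with no prime model. By \cref{prop:pi2impprime}, any model of Scott complexity $\Pinf{2}$ is a prime model of its first-order theory. So it suffices to find a complete $\Pi_2$ theory $T$ without a prime model, equivalently without a countable atomic model. Such a $T$ automatically has no model of Scott complexity $\Pinf{2}$.

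The standard candidate is the theory of independent unary predicates. Take the vocabulary $\{P_i\}_{i\in\omega}$ of unary relations, and let $T$ say that for every finite $n$ and every $\sigma\in 2^n$ there are infinitely many $x$ with $\bigwedge_{i<n}P_i(x)^{\sigma(i)}$.

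First I will check that $T$ is $\Pi_2$. Each axiom ``there are at least $k$ elements realizing the Boolean combination $\sigma$'' is existential, hence $\Sigma_1$ and in particular $\Pi_2$.

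Second, completeness and quantifier elimination follow by a routine back-and-forth argument. Any two $\omega$-saturated models, or equivalently the restrictions to finitely many predicates, are $\aleph_0$-categorical reducts. So $T$ is complete and has quantifier elimination.

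Third, I will show that no model is atomic. By quantifier elimination, a complete $1$-type containing $x\neq c$ for all relevant parameters is determined by the sequence $\bigl(P_i(x)^{\epsilon_i}\bigr)_{i\in\omega}$ for some $\epsilon\in 2^\omega$. No such type is isolated: any formula mentions only finitely many $P_i$, so it is consistent with both $P_j(x)$ and $\neg P_j(x)$ for a larger $j$. Every model realizes some non-isolated $1$-type, so there is no atomic model and hence no prime model.

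The main obstacle is essentially nil. The only point needing care is the quantifier elimination, and it is standard.

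An alternative would be a direct check that no model $\A\models T$ has all orbits $\Sinf{1}$-definable. That would mean showing some element is $1$-free, by matching any finite configuration with a different element agreeing on the first $n$ predicates. I would fall back on this route only if invoking \cref{prop:pi2impprime} needed extra justification.
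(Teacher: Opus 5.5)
Your proof is correct and is essentially the paper's. It uses the same theory of independent unary predicates with $\Sigma_1$ axioms and the same completeness via quantifier elimination; your sentence about $\omega$-saturated models and $\aleph_0$-categorical reducts is garbled, but the elimination is standard, and the paper also just calls it straightforward. It also rests on the same key point: a finitary formula mentions only finitely many $P_i$, so it cannot decide $P_n$ for a fresh $n$.

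The only difference is packaging. You route through \cref{prop:pi2impprime} to the classical fact that $T$ has no atomic model. The paper instead argues directly: by quantifier elimination, a $\Sinf{1}$-definable orbit would contain the set defined by some quantifier-free formula true of the element, and the axioms then supply a non-automorphic element of that set that disagrees with it on a fresh $P_n$.
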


\begin{proof}
  Let $T$ be a theory over infinitely many unary predicates $P_i$,  and given $I,J\subset_{fin}\omega$ let
\[ \psi_{I,J} := \exists x \bigwedge_{i\in I} P_i(x) \land \bigwedge_{j\in J} P_j(x).\]
An element $x\in T$ can be thought of as a path through $2^{<\omega}$ declaring which predicates hold or do not hold of $x$.
By its presented form, $T$ is $\Pi_2$; indeed, all of its axioms are even $\Sigma_1$.
Note that $T$ is complete as a straightforward argument demonstrates that it admits quantifier elimination.

Consider $\+M\models T$.
Say that it has a $\Pinf{2}$ Scott sentence.
This means that every element has an orbit isolated by a (first-order) $\Sigma_1$ formula.
By quantifier elimination, each orbit is isolated by a quantifier-free formula.
Consider $x\in \+M$ and its isolating formula $\varphi$.
$\varphi$ can only declare that finitely many predicates in the set $I'$ do hold of $x$ and finitely many predicates in the set $J'$ do not hold of $x$.
Let $n\notin I'\cup J'$.
If $\+M\models P_n(x)$ then $\psi_{I',J'\cup\{n\}}$ guarantees the existence of an element with $\varphi$ that is not automorphic of $x$.
Similarly, If $\+M\models \lnot P_n(x)$ then $\psi_{I'\cup\{n\},J'}$ guarantees the existence of an element with $\varphi$ that is not automorphic of $x$.
This yields the desired contradiction, so $\+M$ has no models of Scott complexity $\Pinf{2}$.
\end{proof}

\cref{prop:Pi2PrimeModel} iterates up the arithmetic hierarchy in the expected manner.

\pinprime

\begin{proof}
Say that $T$ is defined over the vocabulary $L$.
We Morleyize $L$ to include new predicates for all $\Sigma_n$ sets.
This can be done in a manner where the definitions of each new predicate are $\Pi_2$ (see \cite{Part2} Definition II.25).
In particular, we obtain a $\Pi_2$ theory $T'$ that is an expansion by definitions of $T$ over the vocabulary $L'\supseteq L$.
If $T$ is $\omega$-stable, as $T'$ is an expansion by definitions, $T'$ is also $\omega$-stable.
If $T$ is complete and has fewer than continuum many countable models, $T'$ similarly is complete and has fewer than continuum many countable models.
By \cref{prop:Pi2PrimeModel}, $T'$ has a prime model $P'$ with complexity $\Pinf{2}$.
Let $P$ be the reduct of $P'$ with $P\models T$.
Consider the type of some $x\in P$.
In $P'$, the $L'$-type of $x$ is isolated by some $\varphi\in L'$, because $P'$ is a prime model.
Over $T'$, we can expand the $L'$ predicates in $\varphi$ to their definitions in $L$.
This gives a formula $\hat{\varphi}\in L$ that isolates the $L'$-type of $x$ in $P'$.
In particular, $\hat{\varphi}$ is an $L$-formula that isolates the $L$-type of $x$ in $P$.
Therefore, every type realized in $P$ is isolated, so $P$ is prime.
Consider now $\chi$, the $\Pinf{2}$ Scott sentence for $P$.
Proceeding as above, we may define $\hat{\chi}$, an $L$-formula equivalent to $\chi$, and therefore also a Scott sentence for $P$.
Furthermore, as each quantifier-free formula in $\chi$ is replaced by a formula of quantifier depth $n$, $\hat{\chi}$ is  $\Pinf{n+2}$.
Lastly, $\chi$ is an $L$-Scott sentence for $P'$, so it is also a Scott sentence for $P$, as desired.
\end{proof}

In general, the Scott rank of a prime model may be as high as $\omega$, as the automorphism orbits isolated by the first-order formulas isolating the first-order type of the tuple may be $\Sigma_n$ for any $n$. An example of models where this Scott rank is attained are prime models of non-standard completions of Peano arithmetic~\cite{montalban2024}, see also~\cite{GLRS}.

%
\printbibliography

@article{miller1983,
  title = {On the {{Borel}} Classification of the Isomorphism Class of a Countable Model.},
  author = {Miller, Arnold W.},
  date = {1983},
  journaltitle = {Notre Dame Journal of Formal Logic},
  volume = {24},
  number = {1},
  pages = {22--34}
}

@article{Sil80,
	author = {Silver, Jack H.},
	coden = {AMLOAD},
	fjournal = {Annals of Mathematical Logic},
	issn = {0003-4843},
	journal = {Ann. Math. Logic},
	mrclass = {03E15},
	mrnumber = {MR568914 (81d:03051)},
	mrreviewer = {Keith Devlin},
	number = {1},
	pages = {1--28},
	title = {Counting the number of equivalence classes of {B}orel and coanalytic equivalence relations},
	volume = {18},
	year = {1980}}

@article{richter1981,
	author = {Richter, Linda J.},
	date = {1981},
	doi = {10.2307/2273222},
	journaltitle = {The Journal of Symbolic Logic},
	number = {04},
	pages = {723--731},
	title = {Degrees of Structures},
	url = {http://journals.cambridge.org/abstract_S0022481200045035},
	urldate = {2015-06-06},
	volume = {46}}

@book{marker2002,
	author = {Marker, David},
	date = {2002},
	doi = {10.1007/b98860},
	isbn = {978-0-387-98760-6},
	publisher = {Springer Science \& Business Media},
	shorttitle = {Model Theory},
	title = {Model Theory: An Introduction},
	url = {https://books.google.at/books?hl=en&lr=&id=QieAHk--GCcC&oi=fnd&pg=PA1&dq=marker+model+theory&ots=Z2PxCJt1kE&sig=DcehxLnj2h9T6YRcbrxIviS68R8},
	urldate = {2015-06-03}}

@article{montalban2024,
	author = {Montalb\'an, Antonio and Rossegger, Dino},
	date = {2024},
	doi = {10.1017/jsl.2023.43},
	eprint = {2208.01697},
	eprinttype = {arXiv},
	issn = {0022-4812, 1943-5886},
	journaltitle = {The Journal of Symbolic Logic},
	number = {4},
	pages = {1703--1719},
	title = {The Structural Complexity of Models of Arithmetic},
	volume = {89}}

@incollection{Ste78,
	address = {Berlin},
	author = {Steel, John R.},
	booktitle = {Cabal {S}eminar 76--77 ({P}roc. {C}altech-{UCLA} {L}ogic {S}em., 1976--77)},
	mrclass = {03C15 (03C70 03E15)},
	mrnumber = {526920 (81b:03036)},
	mrreviewer = {John Rosenthal},
	pages = {193--208},
	publisher = {Springer},
	series = {Lecture Notes in Math.},
	title = {On {V}aught's conjecture},
	volume = {689},
	year = {1978}}

@article{Khi04,
	author = {Khisamiev, A. N.},
	fjournal = {Rossi\u\i skaya Akademiya Nauk. Sibirskoe Otdelenie. Institut Matematiki im. S. L. Soboleva. Sibirski\u\i\ Matematicheski\u\i\ Zhurnal},
	issn = {0037-4474},
	journal = {Sibirsk. Mat. Zh.},
	mrclass = {03D45},
	mrnumber = {MR2048764 (2005f:03061)},
	mrreviewer = {Andrzej Orlicki},
	number = {1},
	pages = {211--228},
	title = {On the {E}rshov upper semilattice {$L_E$}},
	volume = {45},
	year = {2004}}

@article{ACM24,
	author = {Alvir, Rachael and Csima, Barbara F. and MacLean, Luke},
	journal = {Preprint},
	title = {Scott Complexity of Reduced Abelian p-Groups},
	volume = {https://arxiv.org/abs/2407.06940},
	year = {2024}}

@article{HM,
	author = {Harris, Kenneth and Montalb{\'a}n, Antonio},
	coden = {TAMTAM},
	doi = {10.1090/S0002-9947-2011-05331-6},
	fjournal = {Transactions of the American Mathematical Society},
	issn = {0002-9947},
	journal = {Trans. Amer. Math. Soc.},
	mrclass = {03D80 (03C57)},
	mrnumber = {2846355},
	number = {2},
	pages = {827--866},
	title = {On the {$n$}-back-and-forth types of {B}oolean algebras},
	url = {http://dx.doi.org/10.1090/S0002-9947-2011-05331-6},
	volume = {364},
	year = {2012}}

@article{Sac07,
	author = {Sacks, Gerald E.},
	coden = {NDJFAM},
	doi = {10.1305/ndjfl/1172787542},
	fjournal = {Notre Dame Journal of Formal Logic},
	issn = {0029-4527},
	journal = {Notre Dame J. Formal Logic},
	mrclass = {03C70 (03D60)},
	mrnumber = {2289894 (2008a:03062)},
	mrreviewer = {Barbara Majcher-Iwanow},
	number = {1},
	pages = {5--31},
	title = {Bounds on weak scattering},
	url = {http://dx.doi.org/10.1305/ndjfl/1172787542},
	volume = {48},
	year = {2007}}

@article{HT22,
	author = {Harrison-Trainor, Matthew},
	doi = {10.1017/bsl.2021.62},
	fjournal = {The Bulletin of Symbolic Logic},
	issn = {1079-8986,1943-5894},
	journal = {Bull. Symb. Log.},
	mrclass = {03D45 (03C57 03C70)},
	mrnumber = {4402053},
	mrreviewer = {Alexandra\ Andreeva\ Soskova},
	number = {1},
	pages = {71--103},
	title = {An introduction to the {S}cott complexity of countable structures and a survey of recent results},
	url = {https://doi.org/10.1017/bsl.2021.62},
	volume = {28},
	year = {2022}}

@article{Mor70,
	author = {Morley, Michael},
	fjournal = {The Journal of Symbolic Logic},
	issn = {0022-4812},
	journal = {J. Symbolic Logic},
	mrclass = {02.50},
	mrnumber = {0288015 (44 \#5213)},
	mrreviewer = {F. R. Drake},
	pages = {14--18},
	title = {The number of countable models},
	volume = {35},
	year = {1970}}

@book{Part1,
	author = {Montalb\'{a}n, Antonio},
	doi = {10.1017/9781108525749},
	isbn = {978-1-108-42329-8},
	label = {Part 1},
	mrclass = {03-02 (03C57)},
	mrnumber = {4274028},
	pages = {xxii+190},
	publisher = {Cambridge University Press, Cambridge; Association for Symbolic Logic, Ithaca, NY},
	series = {Perspectives in Logic},
	title = {Computable structure theory---within the arithmetic},
	url = {https://doi.org/10.1017/9781108525749},
	year = {2021}}

@article{Wag82,
	author = {Wagner, C. M.},
	doi = {10.1016/0003-4843(82)90015-8},
	fjournal = {Annals of Mathematical Logic},
	issn = {0003-4843},
	journal = {Ann. Math. Logic},
	mrclass = {03C52 (03C15 03C45)},
	mrnumber = {661477},
	number = {1},
	pages = {47--67},
	title = {On {M}artin's conjecture},
	url = {https://doi.org/10.1016/0003-4843(82)90015-8},
	volume = {22},
	year = {1982}}

@incollection{Vau61,
	address = {Oxford},
	author = {Vaught, R. L.},
	booktitle = {Infinitistic {M}ethods ({P}roc. {S}ympos. {F}oundations of {M}ath., {W}arsaw, 1959)},
	mrclass = {02.50},
	mrnumber = {0186552 (32 \#4011)},
	mrreviewer = {G. Fuhrken},
	pages = {303--321},
	publisher = {Pergamon},
	title = {Denumerable models of complete theories},
	year = {1961}}

@article{HM77,
	author = {Harnik, V. and Makkai, M.},
	doi = {10.2307/2041293},
	fjournal = {Proceedings of the American Mathematical Society},
	issn = {0002-9939},
	journal = {Proc. Amer. Math. Soc.},
	mrclass = {02H10 (02B25)},
	mrnumber = {472506},
	mrreviewer = {John P. Burgess},
	number = {2},
	pages = {309--314},
	title = {A tree argument in infinitary model theory},
	url = {https://doi.org/10.2307/2041293},
	volume = {67},
	year = {1977}}

@incollection{Sac83,
	address = {Amsterdam},
	author = {Sacks, Gerald E.},
	booktitle = {Southeast {A}sian conference on logic ({S}ingapore, 1981)},
	mrclass = {03C15},
	mrnumber = {723338 (85i:03095)},
	mrreviewer = {David E. Marker},
	pages = {185--195},
	publisher = {North-Holland},
	series = {Stud. Logic Found. Math.},
	title = {On the number of countable models},
	volume = {111},
	year = {1983}}

@article{CGHT,
	author = {Ruiyan Chen and David Gonzalez and Matthew Harrison-Trainor},
	journal = {Transactions of the American Mathematical Society},
	title = {Optimal syntactic definitions of back-and-forth types},
	volume = {To appear},
	year = {2026+}}

@incollection{Karp,
	author = {Karp, Carol R.},
	booktitle = {Theory of {M}odels ({P}roc. 1963 {I}nternat. {S}ympos. {B}erkeley)},
	mrclass = {02.35},
	mrnumber = {209132},
	mrreviewer = {J.\ E.\ Fenstad},
	pages = {407--412},
	publisher = {North-Holland, Amsterdam},
	title = {Finite-quantifier equivalence},
	year = {1965}}

@article{MonICM,
	author = {Antonio Montalb{\'a}n},
	journal = {Proceedings of ICM 2014},
	pages = {79-101},
	title = {Computability theoretic classifications for classes of structures},
	url = {ClassesOfStructures.pdf},
	volume = {2},
	year = {2014}}

@article{AGNHTT,
	author = {Alvir, Rachael and Greenberg, Noam and Harrison-Trainor, Matthew and Turetsky, Dan},
	doi = {10.1017/jsl.2021.4},
	fjournal = {The Journal of Symbolic Logic},
	issn = {0022-4812,1943-5886},
	journal = {J. Symb. Log.},
	mrclass = {03D45 (03C57 03C70)},
	mrnumber = {4362932},
	mrreviewer = {Wesley\ Calvert},
	number = {4},
	pages = {1706--1720},
	title = {Scott complexity of countable structures},
	url = {https://doi.org/10.1017/jsl.2021.4},
	volume = {86},
	year = {2021}}

@article{MonSR,
	author = {Montalb{\'a}n, A.},
	doi = {10.1090/proc/12669},
	fjournal = {Proceedings of the American Mathematical Society},
	issn = {0002-9939},
	journal = {Proc. Amer. Math. Soc.},
	mrclass = {03D45 (03C57 03C75 03D60)},
	mrnumber = {3411157},
	mrreviewer = {Rodney G. Downey},
	number = {12},
	pages = {5427--5436},
	title = {A robuster Scott rank},
	url = {scottRank.pdf},
	volume = {143},
	year = {2015}}

@incollection{Sco65,
	address = {Amsterdam},
	author = {Scott, Dana},
	booktitle = {Theory of {M}odels ({P}roc. 1963 {I}nternat. {S}ympos. {B}erkeley)},
	mrclass = {02.35},
	mrnumber = {0200133 (34 \#32)},
	mrreviewer = {E. Engeler},
	pages = {329--341},
	publisher = {North-Holland},
	title = {Logic with denumerably long formulas and finite strings of quantifiers},
	year = {1965}}

@misc{PT25,
	archiveprefix = {arXiv},
	author = {Anand Pillay and Predrag Tanovi{\'c}},
	eprint = {2508.06854},
	primaryclass = {math.LO},
	title = {The number of countable models of first-order theories},
	url = {https://arxiv.org/abs/2508.06854},
	year = {2025}}

@article{GM23,
	author = {Gonzalez, David and Montalb\'an, Antonio},
	doi = {10.1090/tran/8950},
	fjournal = {Transactions of the American Mathematical Society},
	issn = {0002-9947,1088-6850},
	journal = {Trans. Amer. Math. Soc.},
	mrclass = {03D45 (03C75)},
	mrnumber = {4630765},
	mrreviewer = {Dino\ Rossegger},
	number = {8},
	pages = {5989--6008},
	title = {The {$\omega$}-{V}aught's conjecture},
	url = {https://doi.org/10.1090/tran/8950},
	volume = {376},
	year = {2023}}

@unpublished{Part2,
	author = {Antonio Montalb\'an},
	label = {Part 2},
	note = {In preparation},
	title = {Computable structure theory: Beyond the arithmetic},
	year = {P2}}

@book{Hod93,
	author = {Hodges, Wilfrid},
	doi = {10.1017/CBO9780511551574},
	isbn = {0-521-30442-3},
	mrclass = {03-01 (03-02 03Cxx)},
	mrnumber = {1221741 (94e:03002)},
	mrreviewer = {J. M. Plotkin},
	pages = {xiv+772},
	publisher = {Cambridge University Press, Cambridge},
	series = {Encyclopedia of Mathematics and its Applications},
	title = {Model theory},
	url = {http://dx.doi.org/10.1017/CBO9780511551574},
	volume = {42},
	year = {1993}}

@article{Kru60,
	author = {Kruskal, J. B.},
	doi = {10.2307/1993287},
	fjournal = {Transactions of the American Mathematical Society},
	issn = {0002-9947,1088-6850},
	journal = {Trans. Amer. Math. Soc.},
	mrclass = {06.00},
	mrnumber = {111704},
	mrreviewer = {Graham\ Higman},
	pages = {210--225},
	title = {Well-quasi-ordering, the {T}ree {T}heorem, and {V}azsonyi's conjecture},
	url = {https://doi.org/10.2307/1993287},
	volume = {95},
	year = {1960}}

@article{Ric81,
	author = {Richter, Linda Jean},
	coden = {JSYLA6},
	fjournal = {The Journal of Symbolic Logic},
	issn = {0022-4812},
	journal = {J. Symbolic Logic},
	mrclass = {03C57 (03D30)},
	mrnumber = {MR641486 (83d:03048)},
	mrreviewer = {V. K. Bulitko},
	number = {4},
	pages = {723--731},
	title = {Degrees of structures},
	volume = {46},
	year = {1981}}

@article{McountingBF,
	author = {Antonio Montalb\'an},
	journal = {Journal of Logic and Computability},
	number = 4,
	pages = {857--876},
	title = {Counting the back-and-forth types},
	url = {numberbftypesCorrected.pdf},
	volume = 22,
	year = {2010}}

@article{Kni86,
	author = {Knight, Julia F.},
	coden = {JSYLA6},
	doi = {10.2307/2273915},
	fjournal = {The Journal of Symbolic Logic},
	issn = {0022-4812},
	journal = {J. Symbolic Logic},
	mrclass = {03D30 (03C15)},
	mrnumber = {865929},
	mrreviewer = {R. A. Di Paola},
	number = {4},
	pages = {1034--1042},
	title = {Degrees coded in jumps of orderings},
	url = {http://dx.doi.org/10.2307/2273915},
	volume = {51},
	year = {1986}}

@article{GR24,
	author = {David Gonzalez and Dino Rossegger},
	journal = {Journal Symbolic Logic},
	label = {GR},
	note = {to appear},
	title = {The {S}cott Sentence Complexity of Linear Orders},
	year = {2024}}

@unpublished{GLRS,
	author = {David Gonzalez and Mateusz Lelyk and Dino Rossegger and Patryk Szlufik},
	note = {Preprint available at https://arxiv.org/abs/2507.12025},
	title = {Classifying the Complexities of Models of Arithmetic}}

@article{HT18,
	author = {Harrison-Trainor, Matthew},
	doi = {10.1016/j.aim.2018.03.012},
	fjournal = {Advances in Mathematics},
	issn = {0001-8708,1090-2082},
	journal = {Adv. Math.},
	mrclass = {03D45 (03C57 03C75 03D60 03E60)},
	mrnumber = {3787542},
	mrreviewer = {Wesley\ Calvert},
	pages = {109--147},
	title = {Scott ranks of models of a theory},
	url = {https://doi.org/10.1016/j.aim.2018.03.012},
	volume = {330},
	year = {2018}}
\end{document}